 \newcommand{\R}{\mathbb{R}}
 \newcommand{\ROM}[1]{\mathrm{\uppercase\expandafter{\romannumeral#1}}}
  \theoremstyle{definition}
   \numberwithin{equation}{section} \theoremstyle{plain}
 \newtheorem{thm}{Theorem}
 \newtheorem{lem}{Lemma}[section]
 \newtheorem{rem}{Remark}[section]
 \newtheorem{prop}{Proposition}
 \newtheorem{assert}{Assertion}[subsection]
\newtheorem{ack}{Acknowledgements}   
  \numberwithin{equation}{section}
\title[Clifford systems, harmonic maps and non-negative curvature]{\textbf{Clifford systems, harmonic maps and metrics with non-negative curvature}}
\author[Chao Qian]{Chao Qian}\address{School of Mathematics and Statistics, Beijing Institute of Technology, Beijing
100081, P.R. China}
\email{6120150035@bit.edu.cn}
\author[Z. Z. Tang]{Zizhou Tang}\address{Chern Institute of Mathematics $\&$ LPMC, Nankai University, Tianjin 300071, P. R. China}
\email{zztang@nankai.edu.cn}
\author[W. J. Yan]{Wenjiao Yan}\address{School of Mathematical Sciences, Laboratory of Mathematics and Complex Systems, Beijing Normal University, Beijing, 100875, P. R. China}
\email{wjyan@bnu.edu.cn}
\thanks {The project is partially supported by the NSFC (11871282, 11931007), BNSF (Z190003), Nankai Zhide Foundation.
}
\subjclass[2010] { Primary 55R25, Secondary 55Q40, 53C20, 58E20.}
\keywords{Isoparametric hypersurface, focal submanifold, Clifford system, characteristic map, harmonic map, non-negative sectional curvature.}
\begin{document}

\maketitle

\begin{abstract}
Associated with a symmetric Clifford system $\{P_0, P_1,\cdots, P_{m}\}$ on $\mathbb{R}^{2l}$, there is a canonical vector bundle
$\eta$ over $S^{l-1}$. For $m=4$ and $8$, we construct explicitly its characteristic map, and determine completely when the sphere bundle
$S(\eta)$ associated to $\eta$ admits a cross-section. These generalize the results in \cite{St51} and \cite{Ja58}. As an application, we establish new harmonic representatives of certain elements in homotopy groups of spheres (cf. \cite{PT97} \cite{PT98}). By a suitable choice of Clifford system, we construct a metric of non-negative curvature on $S(\eta)$ which is diffeomorphic to the inhomogeneous focal submanifold $M_+$ of OT-FKM type isoparametric hypersurfaces with $m=3$.
\end{abstract}

\section{Introduction}\label{intro}

As it is well known, the classification of isoparametric hypersurfaces in unit spheres was accomplished recently.
For $g=4$, i.e., with $4$ distinct constant principal curvatures, the most complicated and beautiful case,
 isoparametric hypersurfaces must be of OT-FKM type except for two homogeneous cases (cf. \cite{CCJ07}, \cite{Imm08},  \cite{Chi13}, \cite{Chi20}).
 Given a symmetric Clifford system $\{P_0, P_1,\cdots, P_{m}\}$ on $\mathbb{R}^{2l}$,
 \cite{FKM81} constructed a Cartan-M\"{u}nzner polynomial $F$ on
$\mathbb{R}^{2l}$:
\begin{equation*}\label{FKM isop. poly.}
F(x) = |x|^4 - 2\displaystyle\sum_{i = 0}^{m}{\langle
P_{i}x,x\rangle^2},
\end{equation*}
 which produces an \emph{OT-FKM type} isoparametric family with multiplicity pair $(m_1,m_2)=(m, l-m-1)$  provided $l-m-1>0$, where $l=k\delta(m)$, $\delta(m)$ is the dimension of the irreducible module of the Clifford algebra $\mathcal{C}_{m-1}$.
According to \cite{FKM81}, when $m\not \equiv 0~ (\mathrm{mod}~4)$, there exists exactly one kind of OT-FKM type.
When $m\equiv 0~(\mathrm{mod} ~4)$,
the family with $P_0P_1\cdots P_m=\pm I_{2l}$ is called \emph{definite}, the others \emph{indefinite}. There are exactly $[\frac{k}{2}]$ non-congruent indefinite families.

Up to an algebraic equivalence, one can always express
\begin{equation*}\label{FKM}
P_0=\left(
\begin{matrix}
I_l & 0 \\
0 & -I_l \\
\end{matrix}\right),\;
P_1=\left(
\begin{matrix}
0 & I_l\\
I_l & 0 \\
\end{matrix}\right), P_{1+\alpha}=\left(
\begin{matrix}
0 & E_{\alpha}\\
-E_{\alpha} & 0 \\
\end{matrix}\right),
~\mathrm{for}~ 1 \leq \alpha\leq m-1,
\end{equation*}
where $\{E_1, E_2,\cdots, E_{m-1}\}$ are skew-symmetric orthogonal transformations on $\mathbb{R}^l$, satisfying
$E_{\alpha}E_{\beta}+E_{\beta}E_{\alpha}=-2\delta_{\alpha\beta}I_l$\,\,
for $1 \leq \alpha,\beta\leq m-1$. It is known that the focal submanifold $M_+$ is quadratic and is given by
\begin{equation*}
M_+
= \left\{(z, w) \in \mathbb{R}^l\oplus\mathbb{R}^l~\vline~\begin{array}{ll}\langle z, w \rangle=0, ~~|z|^2=|w|^2=\frac{1}{2},\\
 \langle z, E_{\alpha}w \rangle=0, ~\alpha=1,\cdots, m-1\end{array}\right\}.\label{M+}
\end{equation*}
Let $\eta$ be the subbundle of $TS^{l-1}$ such that the fiber of $\eta$ at $z\in S^{l-1}$ is the orthogonal complement in $\mathbb{R}^l$ of the $m$-plane
spanned by $\{z, E_1z,\cdots, E_{m-1}z\}$. Then $M_+$ is clearly diffeomorphic to $S(\eta)$, the associated sphere bundle of $\eta$ (cf. \cite{Wa88}, \cite{QT16}).

When $m=1$, $2$ or $4$ in the definite case, $M_+$ is just the real Stiefel manifold $V_2(\mathbb{R}^k)$, complex Stiefel manifold $V_2(\mathbb{C}^k)$, or quaternionic Stiefel manifold $V_2(\mathbb{H}^k)$ respectively, and the characteristic map of the corresponding vector bundle $\eta$ has been given in Theorem 23.4, Theorem 24.3 and Remark 24.11
in the classic book
\cite{St51} by Steenrod. 
When $m=8$ in the definite case, Proposition \ref{1.2} in the present paper reveals that $M_+$ is diffeomorphic to the octonionic Stiefel manifold $V_2(\mathbb{O}^k)$.
As the first main result of this paper, we exhibit the characteristic map of the vector bundle $\eta$ for $m=4$ in the indefinite case and for all cases with $m=8$.
The following theorem is an extension of the results in Steenrod's book \cite{St51}.

\begin{thm}\label{characteristic maps}
Let $\{P_0, P_1,\cdots, P_{m}\}$ be a symmetric Clifford system on $\mathbb{R}^{2l}$ and $l=k\delta(m)$. Then
\begin{itemize}
\item[(i).] For $m=4$ and $\mathrm{Tr}(P_0P_1\cdots P_4)=-8(2p-k+2)$ with $0\leq p\leq k-1$, the characteristic map $\chi: S^{4k-2} \rightarrow \mathrm{SO}(4k-4)$ of $\eta$ is given by
$$\chi(z)(X)=X-2\Big(\langle X, W\rangle_p(1+z_k)^{-2}\Big)\ast_p W,$$
where $z=(z_1,..., z_k)\in S^{4k-2}$, the equator in $S^{4k-1}\subset \mathbb{H}^k$ with $ \mathrm{Re}(z_k)=0$, $W=(z_1,..., z_{k-1})$, and $X\in \mathbb{H}^{k-1}\cong \mathbb{R}^{4k-4}.$ Moreover, $\langle \cdot, \cdot\rangle_p$ and $\ast_p$-operation are defined by (\ref{q1}) and (\ref{q2}), respectively.
\vspace{1mm}
\item[(ii).] For $m=8$ and $\mathrm{Tr}(P_0P_1\cdots P_8)=-16(2p-k+2)$ with $0\leq p\leq k-1$, the characteristic map $\chi: S^{8k-2} \rightarrow \mathrm{SO}(8k-8)$ of $\eta$ is given by
$$\chi(z)(X)=X-2\Big(\langle X, W\rangle_p(1+z_k)^{-2}\Big)\ast_p W,$$
where $z=(z_1,..., z_k)\in S^{8k-2}$, the equator in $S^{8k-1}\subset \mathbb{O}^k$ with $ \mathrm{Re}(z_k)=0$, $W=(z_1,..., z_{k-1})$, and $X\in \mathbb{O}^{k-1}\cong \mathbb{R}^{8k-8}.$ Moreover, $\langle \cdot, \cdot\rangle_p$ and $\ast_p$-operation are defined by (\ref{o1}) and (\ref{o2}), respectively.
\end{itemize}
\end{thm}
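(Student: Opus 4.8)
The plan is to compute the characteristic map directly from the clutching construction for $\eta$, exploiting the fact that $S^{4k-2}$ (resp. $S^{8k-2}$) is covered by two cells and that over each cell $\eta$ trivializes. First I would set up the standard decomposition $S^{4k-2} = D_+ \cup D_-$ where $D_\pm$ are the upper/lower hemispheres with respect to the last coordinate $z_k$ (recall $\mathrm{Re}(z_k)=0$, so the relevant sphere is the equator in $S^{4k-1}\subset\mathbb{H}^k$), meeting along the equatorial $S^{4k-3}$. Over each hemisphere one writes down an explicit frame for the fiber of $\eta$ at $z$, namely the orthogonal complement in $\mathbb{R}^{4k}\cong\mathbb{H}^k$ of the $4$-plane $\mathrm{span}\{z, E_1z, E_2z, E_3z\}$; here the explicit matrix form of $P_0,\dots,P_4$ given in the introduction lets one identify $E_1,E_2,E_3$ with (a twist by the parameter $p$ of) left/right quaternion multiplications, which is exactly where the invariants $\langle\cdot,\cdot\rangle_p$ and $\ast_p$ enter. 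The characteristic (clutching) map $\chi\colon S^{4k-3}\to\mathrm{SO}(4k-4)$ — or its suspension-adjusted form on $S^{4k-2}$ as stated — is then the comparison isomorphism between the two local frames along the equator.

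The key computational step is to produce, for each $z$ in the equatorial sphere with $z_k$ on the unit circle orthogonal to $\mathrm{Re}$, an orthonormal trivialization of $\eta$ over the hemisphere not containing the "bad" point $z_k=-1$, and symmetrically over the other hemisphere, and to recognize the transition map. I would follow Steenrod's method for $V_2(\mathbb{H}^k)$ (Theorem 24.3 and Remark 24.11 of \cite{St51}): over the chart avoiding $z_k=-1$ one uses a reflection-type formula built from the vector $W=(z_1,\dots,z_{k-1})$ and the factor $(1+z_k)^{-1}$ (this is well-defined precisely off $z_k=-1$), and the composite of the two charts' trivializations gives a map of the form $X\mapsto X - 2\langle X,W\rangle_p(1+z_k)^{-2}\ast_p W$. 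One checks this is a Householder-type reflection, hence lies in $\mathrm{O}(4k-4)$, and a determinant/continuity argument (it equals the identity when $W=0$) places it in $\mathrm{SO}(4k-4)$. The role of the parameter $p$, determined by $\mathrm{Tr}(P_0P_1\cdots P_4)=-8(2p-k+2)$, is to pin down the algebraic equivalence class of the Clifford system among the $[k/2]$ indefinite families, and correspondingly which twisted product structure $\ast_p$ appears; I would verify that the $E_\alpha$'s in the normal form, under this value of the trace, act as the $p$-twisted multiplications so that the annihilator of $\{z,E_1z,E_2z,E_3z\}$ is exactly $\{X : \langle X,W\rangle_p = 0\}$-type fibers.

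For part (ii), the argument is formally identical, with $\mathbb{H}$ replaced by $\mathbb{O}$ and the definitions (\ref{o1}), (\ref{o2}) in place of (\ref{q1}), (\ref{q2}); the only subtlety is non-associativity of $\mathbb{O}$, so I would be careful that every octonionic expression appearing ($\langle X,W\rangle_p$, $(1+z_k)^{-2}$, and the $\ast_p$-product) is parenthesized as written and that the needed Moufang-type identities are invoked rather than full associativity. Proposition \ref{1.2} (that $M_+$ for $m=8$ definite is the octonionic Stiefel manifold $V_2(\mathbb{O}^k)$) serves as the consistency check at $p$ corresponding to the definite case.

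I expect the main obstacle to be the bookkeeping that ties the integer $p$ to the correct twisted algebra structure: one must show that the abstract skew-symmetric system $\{E_1,\dots,E_{m-1}\}$ with prescribed $P_0\cdots P_m$-trace can be conjugated into the concrete form of (left-)multiplications by $\mathbb{H}$- or $\mathbb{O}$-units on $\mathbb{H}^{k}$ (resp. $\mathbb{O}^k$) split as $p$ copies of one chirality and $k-p$ of the other, and that under this identification the orthogonal complement of $\mathrm{span}\{z,E_1z,\dots,E_{m-1}z\}$ is precisely the "$\langle\cdot,W\rangle_p$-null" fiber. Once that normal form is in hand, the clutching computation is a controlled generalization of Steenrod's, and checking that $\chi(z)$ is a well-defined element of $\mathrm{SO}$ (single-valuedness off $z_k=-1$, orthogonality, determinant one) is routine.
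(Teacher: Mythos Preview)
Your overall strategy---trivialize $S(\eta)$ over two charts and read off the transition function, following Steenrod---is exactly the paper's. But you have a persistent off-by-one error that would derail the computation: the bundle $\eta$ lives over $S^{l-1}=S^{4k-1}$ (for $m=4$), not over $S^{4k-2}$. It is $S^{4k-1}$ that gets covered by the two open sets $S^{4k-1}\setminus\{\pm N\}$ with $N=(0,\ldots,0,1)\in\mathbb{H}^k$, and the characteristic map is the transition function on the equator $S^{4k-2}=\{z:\mathrm{Re}(z_k)=0\}$. There is no ``suspension adjustment'': the domain $S^{4k-2}$ in the theorem is already the natural equator of the base, and your $S^{4k-3}$ is one dimension too low. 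Note also that the ``bad'' point $z_k=-1$ you mention has $\mathrm{Re}(z_k)=-1\neq 0$, so it lives in $S^{4k-1}$, not in your $S^{4k-2}$---this inconsistency is a symptom of the same confusion.

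Once the dimensions are fixed, the rest of your sketch matches the paper. Two explicit embeddings $\psi_1,\psi_2:(S^{4k-1}\setminus\{\mp N\})\times S^{4k-5}\to N_+\cong S(\eta)$ of the form $(z,X)\mapsto(z,Y)$ are written down, with $Y$ built from $X$, $W$, and $(1\pm\bar z_k)^{-1}$ via $\langle\cdot,\cdot\rangle_p$ and $\ast_p$; then $\chi=\psi_1^{-1}\circ\psi_2$ on the equator. The substantive step is checking that $\psi_1,\psi_2$ are well-defined, i.e.\ that $\langle Y,z\rangle_p=0$ and $|Y|=1$; the paper isolates the needed identities $\langle\varepsilon\ast_p W,W\rangle_p=\varepsilon|W|^2$ and $\langle X,\varepsilon\ast_p W\rangle=\mathrm{Re}(\alpha)\,|\langle X,W\rangle_p|^2$ as separate lemmas, and it is here (for octonions) that $\mathrm{Re}((ab)c)=\mathrm{Re}(a(bc))$ and $(ab)\bar b=a|b|^2$ replace full associativity, just as you anticipate. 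Your continuity argument ($\chi(0,z_k)=I$) for landing in $\mathrm{SO}$ is exactly what the paper does. Finally, the normal-form issue you flag for $p$ is handled not by an abstract conjugation argument but by simply exhibiting a concrete Clifford system with the prescribed trace (the $E_\alpha$'s act by left multiplication by $\mathrm{i},\mathrm{j},\mathrm{k}$ on coordinates $1,\ldots,p$ and $k$, and by $-\mathrm{i},-\mathrm{j},-\mathrm{k}$ on coordinates $p+1,\ldots,k-1$), so everything is explicit from the start.
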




For $m=4$, 
composing the characteristic map $\chi: S^{4k-2} \rightarrow \mathrm{SO}(4k-4)$ of $\eta$ with the natural projection $\pi: \mathrm{SO}(4k-4)\rightarrow S^{4k-5}$, we obtain a map $\pi\circ\chi: S^{4k-2}\rightarrow S^{4k-5}$. It is also an interesting problem to determine its homotopy class in $\pi_{4k-2}S^{4k-5}$, which will be studied in Section 2. As a consequence, we show

\begin{prop}\label{quaternion}
Let $m=4$ and $\mathrm{Tr}(P_0P_1P_2P_3P_4)=-8(2p-k+2)$ with $0\leq p\leq k-1$.
The sphere bundle $S^{4k-5}\hookrightarrow M_+^{8k-6}\rightarrow S^{4k-1}$ associated with the vector bundle $\eta$ admits a cross-section if and only if $k-2-2p$ can be divided by $24$.
\end{prop}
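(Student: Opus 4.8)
The plan is to reduce the cross-section question to a classical computation in the homotopy groups of spheres via the characteristic map. Recall that a sphere bundle $S^{4k-5}\hookrightarrow M_+^{8k-6}\to S^{4k-1}$ arising from a vector bundle $\eta$ over $S^{4k-1}$ with clutching data on the equator $S^{4k-2}$ admits a cross-section if and only if the characteristic map $\chi\colon S^{4k-2}\to \mathrm{SO}(4k-4)$, followed by the evaluation/projection $\pi\colon \mathrm{SO}(4k-4)\to S^{4k-5}$, is null-homotopic; indeed, a section of the sphere bundle over the disk is automatic, and the obstruction to extending it over the top cell is precisely the class of $\pi\circ\chi$ in $\pi_{4k-2}S^{4k-5}$. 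So the entire proposition amounts to determining when $[\pi\circ\chi]=0$ in $\pi_{4k-2}S^{4k-5}$.

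The next step is to identify $\pi_{4k-2}S^{4k-5}=\pi_{n+3}S^n$ (with $n=4k-5$, stable for $k\geq 3$) with $\mathbb{Z}/24$, generated by the suspension of the Hopf map $\nu$ (quaternionic Hopf construction), using Serre's classical computation of the $3$-stem. For the low values $k=1,2$ one checks the statement directly: there $\pi\circ\chi$ lands in a trivial or easily handled group and the divisibility condition $24\mid k-2-2p$ degenerates appropriately, so these are boundary cases to dispose of separately. For $k\geq 3$ the task is to pin down the image of $[\pi\circ\chi]$ under the identification $\pi_{4k-2}S^{4k-5}\cong\mathbb{Z}/24$.

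To compute this class I would use the explicit formula for $\chi$ from Theorem \ref{characteristic maps}(i): $\chi(z)(X)=X-2\bigl(\langle X,W\rangle_p(1+z_k)^{-2}\bigr)\ast_p W$. Composing with $\pi$, which picks out $\chi(z)(X_0)$ for a fixed unit vector $X_0\in\mathbb{H}^{k-1}$, gives a map $S^{4k-2}\to S^{4k-5}$ whose formula is a quaternionic quadratic expression in the coordinates $z_1,\dots,z_{k-1}$ of $W$ twisted by the $\ast_p$-operation. The key observation is that this map factors, up to homotopy, through the quaternionic Hopf map: collapsing all but the "last" relevant coordinate and using that $(1+z_k)^{-2}$ contributes a degree-one twist on the $S^4$ factor, one recognizes $\pi\circ\chi$ as (a multiple of) the composite $S^{4k-2}\to S^{4k-5}$ that is $p$ copies of one quaternionic Hopf-type map and $k-1-p$ copies (with reversed orientation, coming from the signature convention in $\langle\cdot,\cdot\rangle_p$ and $\ast_p$) of its mirror. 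Carefully bookkeeping the signs — this is exactly where the integer $2p-k+2$ enters — shows that $[\pi\circ\chi]=(k-2-2p)\cdot[\nu]$ (possibly up to a unit in $\mathbb{Z}/24$, which does not affect the divisibility conclusion) in $\pi_{4k-2}S^{4k-5}\cong\mathbb{Z}/24$. Hence $\pi\circ\chi$ is null-homotopic, i.e. the bundle has a cross-section, if and only if $24\mid k-2-2p$.

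The main obstacle is the sign/orientation bookkeeping in the middle step: the $\ast_p$-operation and the indefinite inner product $\langle\cdot,\cdot\rangle_p$ mean that the $k-1$ quaternionic "blocks" do not all contribute with the same sign to the Hopf invariant, and one must show that the net contribution is exactly $p-(k-1-p)+\text{(correction)} = 2p-k+2$ times $\nu$ up to a unit, matching the trace normalization $\mathrm{Tr}(P_0P_1\cdots P_4)=-8(2p-k+2)$. Establishing this cleanly — rather than by an error-prone direct homotopy — is best done by comparing with the classical case $p=0$ (Steenrod's computation for $V_2(\mathbb{H}^k)$, where the bundle is the tautological one and the section condition is the classical $24\mid k-2$, cf. \cite{St51}) and tracking how replacing $E_\alpha$ by a conjugate with a different value of $p$ alters the clutching function by a twist whose Hopf invariant is computable, contributing the extra $-2p$. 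Once the $p=0$ case is in hand from \cite{St51} and the twist contribution is identified, the general formula follows.
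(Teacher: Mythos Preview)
Your overall strategy is the same as the paper's: reduce to the class $[\pi\circ\chi]\in\pi_{4k-2}S^{4k-5}\cong\mathbb{Z}_{24}$ and compute it by decomposing the explicit map into suspended quaternionic Hopf pieces. However, two concrete points in your outline are wrong and would derail the computation as written.

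First, your identification of the ``classical'' case is reversed. The definite Clifford system (the quaternionic Stiefel manifold $V_2(\mathbb{H}^k)$ treated by Steenrod and James) corresponds to $p=k-1$, not $p=0$; there the cross-section condition is $24\mid k$, not $24\mid k-2$. So bootstrapping from ``Steenrod at $p=0$'' is not available---the $p=0$ case is genuinely indefinite and must be computed from scratch (the paper does this directly).

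Second, the naive count ``$p$ copies minus $(k-1-p)$ copies plus a correction'' hides the actual crux. Projecting $\chi(z)$ to $S^{4k-5}$ via a fixed unit vector singles out one quaternionic coordinate (say $z_1$), and the resulting map has, besides the $k-2$ ``off-diagonal'' Hopf-type blocks coming from $z_2,\dots,z_{k-1}$, a distinguished block coming from the interaction of $z_1$ with $z_k$. That block is not $\pm[\nu]$ but rather the class of the suspended conjugation map $H(\varrho)$, which satisfies $[\Sigma^2 H(\varrho)]=2[\Sigma H(\zeta)]$ in $\pi_8S^5$; this is what produces the extra ``$-2$'' in $k-2-2p$. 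The paper makes this precise by first replacing $\sigma_{k,p}=\pi\circ\chi$ with an explicitly homotopic model $\phi$ (via an elementary nowhere-vanishing-sum argument) of the form
\[
\phi(z)=\Bigl(1-2|z_1|^2+2\tfrac{\overline{z_1}z_kz_1}{|z_1|},\,-2\overline{z_1}z_2,\dots,-2\overline{z_1}z_p,\,-2z_1z_{p+1},\dots,-2z_1z_{k-1}\Bigr),
\]
and then reading off $[\phi]=-[\Sigma^{4k-8}H(\varrho)]-(p-1)[\Sigma^{4k-9}H(\zeta)]+(k-1-p)[\Sigma^{4k-9}H(\zeta)]\equiv k-2-2p\pmod{24}$. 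Without isolating this $\varrho$-term (and the relation $[\Sigma^2 H(\varrho)]=2[\Sigma H(\zeta)]$), your ``careful bookkeeping'' step has no mechanism to produce the correct integer.
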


Furthermore, the similar case $m=8$ is considered by the following

\begin{prop}\label{octonion}
Let $m=8$ and $\mathrm{Tr}(P_0P_1\cdots P_8)=-16(2p-k+2)$ with $0\leq p\leq k-1$. Then the sphere bundle $S^{8k-9}\hookrightarrow M_+^{16k-10}\rightarrow S^{8k-1}$ associated with the vector bundle $\eta$ admits a cross-section if and only if $k-2-2p$ can be divided by $240$.

In particular, 
the sphere bundle associated with the octonionic Stiefel manifold $V_2(\mathbb{O}^k)$: $S^{8k-9}\hookrightarrow V_2(\mathbb{O}^k)\rightarrow V_1(\mathbb{O}^k)$ admits a cross-section if and only if $k$ can be divided by $240$.
\end{prop}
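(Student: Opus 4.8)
The plan is to reduce the existence of a cross-section to a single homotopy-theoretic equation and then to evaluate it directly from the characteristic map furnished by Theorem \ref{characteristic maps}(ii). Since $S^{8k-9}\hookrightarrow M_+^{16k-10}\to S^{8k-1}$ is the unit sphere bundle $S(\eta)\to S^{8k-1}$ of the rank $8k-8$ bundle $\eta$, it has a cross-section precisely when $\eta$ carries a nowhere-vanishing section, that is, when $\eta\cong\eta'\oplus\varepsilon^{1}$, equivalently when the homotopy class $[\chi]\in\pi_{8k-2}(\mathrm{SO}(8k-8))$ of the characteristic map lies in the image of $\pi_{8k-2}(\mathrm{SO}(8k-9))\to\pi_{8k-2}(\mathrm{SO}(8k-8))$. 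Inserting this into the homotopy exact sequence of $\mathrm{SO}(8k-9)\hookrightarrow\mathrm{SO}(8k-8)\xrightarrow{\pi}S^{8k-9}$, where $\pi(A)=Av_{0}$ for a fixed unit vector $v_{0}$, one sees that a cross-section exists if and only if $[\pi\circ\chi]=0$ in $\pi_{8k-2}(S^{8k-9})$; here $\pi\circ\chi:S^{8k-2}\to S^{8k-9}$ is the octonionic analogue of the map studied for $m=4$ in the proof of Proposition \ref{quaternion}.

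The next step is to write $\pi\circ\chi$ out. Taking $v_{0}\in\mathbb{O}^{k-1}$ supported in a single octonionic coordinate, Theorem \ref{characteristic maps}(ii) gives
\[
(\pi\circ\chi)(z)=\chi(z)(v_{0})=v_{0}-2\big(\langle v_{0},W\rangle_{p}\,(1+z_{k})^{-2}\big)\ast_{p}W,\qquad z=(W,z_{k}),\ \mathrm{Re}\,z_{k}=0,
\]
and since $z_{k}$ is purely imaginary $(1+z_{k})^{-2}=\big((1-|z_{k}|^{2})-2z_{k}\big)/(1+|z_{k}|^{2})^{2}$. With this normalization $\pi\circ\chi$ is a generalized Hopf construction built from octonion multiplication and the signature-$p$ operation $\ast_{p}$, which separates the $k-1$ octonionic coordinates into $p$ ``negative'' blocks and $k-1-p$ ``positive'' ones. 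The heart of the argument is to deform $\pi\circ\chi$ --- via explicit octonion-algebraic homotopies, in the spirit of James's intrinsic-join method --- onto a suspension of an iterate of the octonionic Hopf map $\sigma:S^{15}\to S^{8}$, the iteration running over these blocks with the appropriate signs, and then a degree/Hopf-invariant bookkeeping identifies $[\pi\circ\chi]$ with $\pm(k-2-2p)\,\gamma$ for a fixed generator $\gamma$ of $\pi_{8k-2}(S^{8k-9})$. Consistently with Theorem \ref{characteristic maps}, the coefficient is $-(2p-k+2)=\tfrac{1}{16}\mathrm{Tr}(P_{0}P_{1}\cdots P_{8})$, which explains why it is precisely $k-2-2p$ that controls the answer.

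It remains to pin down $\gamma$ and its order. For $k\ge 3$ the map $\pi\circ\chi$ lies in the stable range, so $\pi_{8k-2}(S^{8k-9})\cong\pi^{s}_{7}\cong\mathbb{Z}_{240}$ and $\gamma$ may be taken to be the stable class of $\sigma$, whose order $240$ is that of the image of the $J$-homomorphism in dimension $7$ (the octonionic counterpart of the $24=|\mathrm{Im}\,J_{3}|$ that occurs for $m=4$). Hence $[\pi\circ\chi]=0$ exactly when $240\mid(k-2-2p)$, as asserted; the remaining case $k=2$, where $\pi_{14}(S^{7})\cong\mathbb{Z}_{120}$, is checked directly via the EHP sequence and is consistent with the same criterion. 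Finally, the ``in particular'' follows from Proposition \ref{1.2}, which identifies the definite $m=8$ family with $V_{2}(\mathbb{O}^{k})$ and the bundle $\eta$ with the one underlying $S^{8k-9}\hookrightarrow V_{2}(\mathbb{O}^{k})\to V_{1}(\mathbb{O}^{k})=S^{8k-1}$: in the definite case $P_{0}P_{1}\cdots P_{8}=\pm I_{2l}$ forces $p=k-1$, so $k-2-2p=-k$ and the criterion reads $240\mid k$.

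The principal obstacle lies in the middle step. Carrying $\pi\circ\chi$ onto an iterated octonionic Hopf map requires homotopies that genuinely use the algebraic structure of $\mathbb{O}$, and because $\mathbb{O}$ is non-associative the division-algebra arguments of Steenrod and James for $\mathbb{R},\mathbb{C},\mathbb{H}$ do not transcribe verbatim; in addition one must keep track of the coefficient with enough precision to see that it equals exactly $\tfrac{1}{16}\mathrm{Tr}(P_{0}\cdots P_{8})$. The identification $\pi^{s}_{7}\cong\mathbb{Z}_{240}$ is classical input, but it is exactly this input that produces the modulus $240$.
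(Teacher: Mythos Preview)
Your strategy matches the paper's: reduce to the vanishing of $[\pi\circ\chi]\in\pi_{8k-2}(S^{8k-9})$, then identify this class as $(k-2-2p)$ times a generator of $\pi_7^s\cong\mathbb{Z}_{240}$. The framing, the use of Theorem~\ref{characteristic maps}(ii), and the appeal to Proposition~\ref{1.2} for the definite case are all exactly as in the paper.

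The step you flag as the ``principal obstacle'' is the one place where your proposal is only a sketch, and it is precisely what the paper carries out in detail. Rather than James's intrinsic-join, the paper uses Whitehead's device: to show two maps $f,g:S^{8k-2}\to S^{8k-9}$ are homotopic, check that $f(z)+g(z)\neq 0$ pointwise. In this way $g_{k,p}:=\pi\circ\chi$ is first homotoped (via an explicit intermediate map $\Upsilon$ that kills the $(1\pm z_k)^{-2}$ factors in the mixed coordinates) to a map $\Psi$ of Hopf-construction type, $\Psi=H(\Omega)$ with $\Omega:S^7\to\mathrm{SO}(8k-9)$ acting coordinatewise. One then decomposes $[\Omega]=[\Omega_0]+\cdots+[\Omega_{k-2}]$ in $\pi_7(\mathrm{SO}(8k-9))$ and applies the $J$-homomorphism termwise: each mixed coordinate contributes $\pm[\Sigma^{8k-17}H(\sigma)]$, while the diagonal term contributes $-[\Sigma^{8k-16}H(\rho)]$. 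The non-associativity issue you worry about is handled by the Toda--Saito--Yokota relation $[\Sigma^2 H(\rho)]=2[\Sigma H(\sigma)]$, which is what converts the $\rho$-term into two copies of the Hopf class and yields the exact count $k-2-2p$. The $k=2$ cases are treated separately (in Remarks earlier in the paper) by direct homotopies, not via EHP; in particular for $p=0$ one sees $g_{2,0}$ factors through $S^3\subset S^7$ and is nullhomotopic, while for $p=1$ one matches $g_2$ to the explicit generator $b$ of $\pi_{14}S^7\cong\mathbb{Z}_{120}$ from \cite{DMR04}.
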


\begin{rem}\rm{
Two propositions above are extensions of the existence theory for cross-section of James \cite{Ja58}.
In fact, for $m=4$ in the definite case, the existence of the cross-section of $V_2(\mathbb{H}^k)$ was originally obtained in \cite{Ja58}. Moreover, James \cite{Ja58} claimed that a cross-section of $V_2(\mathbb{O}^k)\rightarrow V_1(\mathbb{O}^k)$ occurs when $k=240$. Proposition \ref{octonion} generalizes his result.}
\end{rem}

Recall that a map $f: M \rightarrow N$ between compact Riemannian manifolds is called \emph{harmonic} if it is a critical point of the
energy functional $E(f)=\frac{1}{2}\int_M|df|^2$. An important problem posed by Eells
and Sampson in \cite{ES64} is that: when does a homotopy class of a map between
compact Riemannian manifolds admit a harmonic representative? Particularly, it is a fundamental problem asked by Yau \cite{Yau82} to prove the existence of harmonic representatives for all elements of homotopy groups of spheres. We refer to \cite{Smi75, PT97, PT98} for progresses on this problem.
As our second main result, we give more harmonic representatives for certain elements of homotopy groups of spheres.

\begin{thm}\label{thm1.2}
\begin{itemize}
\item[(i).] The homotopy group $\pi_{14}S^7$ has a harmonic generator.
\item[(ii).] Assume $k\in \mathbb{Z}$ and $k>2$. Then the element $(2j-k+1)(\mathrm{mod}~240)$ in the homotopy group $\pi_{8k-1}S^{8k-8}=\mathbb{Z}_{240}$ has a harmonic  representative for every $j=0,\cdots, k-1$.
\end{itemize}
\end{thm}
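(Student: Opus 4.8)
The plan is to produce explicit harmonic maps realizing the stated homotopy classes by exploiting the geometry of the focal submanifolds and the characteristic map built in Theorem \ref{characteristic maps}. The key observation is that eigenmaps and, more generally, maps built from isoparametric data tend to be harmonic, and that the fibration $S^{8k-9}\hookrightarrow M_+\to S^{8k-1}$ together with its cross-section (when it exists) transports this harmonicity into the desired homotopy groups.

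First I would recall the precise link between the sphere bundle $S(\eta)$ and the homotopy groups in play. By the clutching construction, the vector bundle $\eta$ over $S^{8k-1}$ (equivalently, $S^{8k-2}$ as equator) is classified by the homotopy class of its characteristic map $\chi\colon S^{8k-2}\to \mathrm{SO}(8k-8)$, and composing with the projection $\mathrm{SO}(8k-8)\to S^{8k-9}$ gives $\pi\circ\chi\in\pi_{8k-2}S^{8k-9}$. By the standard Morse-theoretic or James-type analysis (the same input used for Proposition \ref{octonion}), the obstruction to a cross-section of $S(\eta)\to S^{8k-1}$, and hence the relevant element of $\pi_{8k-1}S^{8k-8}=\mathbb{Z}_{240}$ obtained by suspension/EHP considerations, is exactly $(2j-k+1)\ (\mathrm{mod}\ 240)$ where $j=p$ is the parameter in $\mathrm{Tr}(P_0\cdots P_8)=-16(2p-k+2)$. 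So the arithmetic content is already supplied by Proposition \ref{octonion}; what remains is to realize these classes harmonically.

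Next, for part (ii) I would follow the strategy of Peng--Tang \cite{PT97,PT98}: the focal submanifold $M_+$ of OT-FKM type is a minimal submanifold of the sphere, so the restriction to $M_+$ of the linear isoparametric-type maps (the components of the Clifford system, or the Cartan--M\"unzner map) are harmonic by the minimality, and by passing through the diffeomorphism $M_+\cong S(\eta)$ and the bundle projection one gets a harmonic map $S(\eta)\to S^{8k-1}$, or dually a harmonic map into $S^{8k-8}$ representing $\pi\circ\chi$. Concretely, one writes down the explicit formula for $\pi\circ\chi$ coming from Theorem \ref{characteristic maps}(ii), checks it is (after normalization) an eigenmap between round spheres — i.e. its coordinate functions are spherical harmonics of a fixed degree — and invokes the classical fact that eigenmaps are harmonic. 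Varying $p=j$ over $0,\dots,k-1$ through the different (definite and indefinite) choices of Clifford system on $\mathbb{R}^{2l}$ with $l=k\delta(8)$ then yields harmonic representatives of all the classes $(2j-k+1)\ (\mathrm{mod}\ 240)$. Part (i) is the special low-dimensional case: $\pi_{14}S^7=\mathbb{Z}_{120}$ (or the relevant summand), and a generator is produced by the same construction with $m=7$ or $8$ and the smallest admissible $k$, or alternatively by exhibiting explicitly the Hopf-type construction $S^{14}\to S^7$ from the octonionic multiplication restricted to an isoparametric family and verifying it is an eigenmap of the appropriate degree.

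The main obstacle I expect is verifying harmonicity of the explicit map $\pi\circ\chi$ — i.e. showing that the components of $X\mapsto X-2\big(\langle X,W\rangle_p(1+z_k)^{-2}\big)\ast_p W$, suitably projected to $S^{8k-9}$ and reparametrized, are eigenfunctions of the Laplacian on the round source sphere with a common eigenvalue. The nonassociativity of $\mathbb{O}$ makes the bookkeeping of the $\ast_p$-operation and the term $(1+z_k)^{-2}$ delicate, and one must be careful that the formula genuinely lands in $S^{8k-9}$ after the correct rescaling. A secondary technical point is to confirm that realizing the full range of $j$ requires exactly the indefinite Clifford systems catalogued in \cite{FKM81} and that their trace values $-16(2p-k+2)$ indeed sweep out $p=0,\dots,k-1$; this is where the input on non-congruent indefinite families is used. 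Once these are in hand, the identification of the homotopy class with $(2j-k+1)\ (\mathrm{mod}\ 240)$ is immediate from Proposition \ref{octonion}, and part (i) follows as the base case.
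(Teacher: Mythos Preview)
Your proposal has a genuine gap at the step where you claim $\pi\circ\chi$ is an eigenmap. Look at the explicit formula (equation (\ref{g p}) in the paper): the components of $g_{k,p}=\pi\circ\chi$ involve the factor $(1+z_k)^{-2}$, which is rational and not polynomial in the coordinates of $z$. These are not spherical harmonics of any fixed degree, so the ``eigenmap $\Rightarrow$ harmonic'' argument does not apply. There is no evident renormalization that fixes this, because the singularity structure near $z_k=-1$ persists in the Taylor expansion. A second, related problem is dimensional: $\pi\circ\chi$ maps $S^{8k-2}\to S^{8k-9}$, whereas the theorem asks for harmonic maps $S^{8k-1}\to S^{8k-8}$. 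You gesture at suspension/EHP, but suspending a harmonic map does not in general yield a harmonic map, so this passage is not free.

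The paper takes a different and more direct route that sidesteps both issues. It does not try to make the characteristic map itself harmonic; instead it writes down an explicit orthogonal multiplication $f:S^7\times S^{8k-9}\to S^{8k-9}$ given by
\[
f(x_1,x_2,\dots,x_k)=(x_1x_2,\dots,x_1x_{j+1},\overline{x_1}x_{j+2},\dots,\overline{x_1}x_k),
\]
which is a bi-eigenmap, and then invokes the Ratto--Ding theorem \cite{Di94} to conclude that the Hopf construction $H(f):S^{8k-1}\to S^{8k-8}$ has a harmonic representative in its homotopy class. The homotopy class $[H(f)]$ is then computed to be $2j-k+1\pmod{240}$ by decomposing $f$ as a product of elementary rotations $\eta_i$ and using the same $J$-homomorphism bookkeeping developed in Remark~\ref{1.8}; this is where the analysis behind Proposition~\ref{octonion} is genuinely reused, but as a homotopy computation, not as a source of the harmonic map. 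Part (i) is handled analogously with $f(x,y)=xy\overline{x}$ on $S^7\times S^6$, again via Ratto--Ding, and the identification of $[H(f)]$ as a generator of $\pi_{14}S^7$ uses the comparison with the map $b$ of \cite{DMR04} carried out in Remark~\ref{1.8}. The missing idea in your plan is precisely this: harmonicity comes from the Hopf construction on a bi-eigenmap plus Ratto--Ding, not from the characteristic map formula itself.
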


In the final part, we focus on the existence of metrics with non-negative sectional curvature on isoparametric hypersurfaces and focal submanifolds in unit spheres. The curvature properties of the induced metrics 
have been investigated in \cite{QTY21}, and the sectional curvatures
are not non-negative in general. It is natural to propose the following

\vspace{2mm}
\noindent \textbf{Problem.}\emph{
Does each isoparametric hypersurface or focal submanifold in the unit sphere admit a metric with non-negative sectional curvature?}
\vspace{2mm}

It can be trivially seen that the homogeneous isoparametric hypersurfaces and the associated focal submanifolds admit metrics with non-negative sectional curvature induced from the bi-invariant metrics on the associated compact Lie groups. As a result, we only need to consider the isoparametric hypersurfaces and focal submanifolds of inhomogeneous OT-FKM type. Making use of Proposition 1 of \cite{Wa88}, we prove in Section \ref{sec4} the following
\begin{prop}\label{M_-}
Each focal submanifold $M_-$ of OT-FKM type admits a metric with non-negative sectional curvature.
\end{prop}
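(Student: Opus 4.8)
The plan is to exhibit $M_-$ as the associated sphere bundle $\mathrm{Spin}(m+1)\times_{\mathrm{Spin}(m)}S^{l-1}$ over $S^m$ and to equip it with the submersion metric induced by a bi-invariant metric on $\mathrm{Spin}(m+1)$ together with the round metric on the fibre. First I would recall the structure of $M_-$. For $u=(u_0,\dots,u_m)\in S^m$ put $P_u=\sum_{i=0}^m u_iP_i$; by the Clifford relations $P_u$ is a symmetric orthogonal involution of $\R^{2l}$, and restricting the Cartan--M\"unzner polynomial to $S^{2l-1}$ gives $F(x)=1-2\sum_i\langle P_ix,x\rangle^2$, so that $M_-=\{F=-1\}=\{x\in S^{2l-1}\mid \sum_i\langle P_ix,x\rangle^2=1\}$. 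A short Cauchy--Schwarz argument identifies this with $\{x\in S^{2l-1}\mid P_ux=x\text{ for some }u\in S^m\}$, the vector $u$ being uniquely determined by $x$; this is essentially Proposition~1 of \cite{Wa88}.

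Next I would bring in the group action. The Clifford system generates a representation of $\mathcal C_{m+1}$ on $\R^{2l}$, so $\mathrm{Spin}(m+1)$ acts orthogonally on $\R^{2l}$, compatibly with its double cover $\rho\colon\mathrm{Spin}(m+1)\to\mathrm{SO}(m+1)$ in the sense that $g\,P_u\,g^{-1}=P_{\rho(g)u}$ for all $g$. Hence $\mathrm{Spin}(m+1)$ preserves $M_-$, the projection $\pi\colon M_-\to S^m$, $x\mapsto u$, is equivariant with respect to the transitive action on $S^m$, and fixing a base point $u_0\in S^m$ yields a diffeomorphism $M_-\cong\mathrm{Spin}(m+1)\times_{\mathrm{Spin}(m)}S^{l-1}$, where $\mathrm{Spin}(m)=\rho^{-1}(\mathrm{SO}(m))$ is the isotropy group and $S^{l-1}$ is the unit sphere of the $(+1)$-eigenspace $V$ of $P_{u_0}$; the latter has dimension $l$ because $\mathrm{Tr}\,P_i=0$ in the normal form recalled in the introduction. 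Since $\mathrm{Spin}(m)$ fixes $u_0$ it commutes with $P_{u_0}$, hence acts on $V$ by an orthogonal representation. Now endow $\mathrm{Spin}(m+1)$ with a bi-invariant metric and $S^{l-1}\subset V$ with the round metric; the product metric on $\mathrm{Spin}(m+1)\times S^{l-1}$ has non-negative sectional curvature. The group $\mathrm{Spin}(m)$ acts on this product by right translations on the first factor and by the orthogonal representation on $V$ on the second, an action that is free---it is already free on $\mathrm{Spin}(m+1)$---and isometric. Therefore the quotient is a smooth manifold, the natural map $\mathrm{Spin}(m+1)\times S^{l-1}\to M_-$ is a Riemannian submersion, and O'Neill's formula shows that the induced metric on $M_-$ has non-negative sectional curvature.

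The only step with genuine content is the bundle description---checking that the $\mathrm{Spin}(m+1)$-action descends to $M_-$, that $x\mapsto u$ is a well-defined equivariant fibration with fibre $S^{l-1}$, and that the isotropy representation on the fibre is orthogonal---all of which follows from Proposition~1 of \cite{Wa88} together with the explicit normal form for the $P_i$. After that, the curvature conclusion is the classical Cheeger-type argument: bi-invariant metrics are non-negatively curved, products preserve non-negative curvature, and the property is inherited by the base of a Riemannian submersion (O'Neill). I therefore anticipate no obstacle beyond this identification.
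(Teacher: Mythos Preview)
Your proposal is correct and follows essentially the same route as the paper: both identify $M_-$ with the associated sphere bundle $\mathrm{Spin}(m+1)\times_{\mathrm{Spin}(m)}S^{l-1}$ (the paper by quoting Proposition~1 of \cite{Wa88}, you by sketching that identification directly via the $\mathrm{Spin}(m+1)$-action on $\mathbb{R}^{2l}$), and then apply the Gray--O'Neill/Cheeger construction with a bi-invariant metric times the round metric. One minor technical quibble: the symmetric $P_i$ satisfy $P_i^2=+I$, so strictly speaking they generate the Clifford algebra for a positive-definite form; the $\mathrm{Spin}(m+1)$-action you want comes from the even part (equivalently from the skew-symmetric products $P_0P_i$), but this does not affect the argument.
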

We are left to consider inhomogeneous focal submanifold $M_+$.
\begin{thm}\label{M_+}
The focal submanifold $M_+$ of OT-FKM type with $(m_1, m_2)=(3, 4k-4)$ admits a metric with non-negative
sectional curvature.
\end{thm}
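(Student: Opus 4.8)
The plan is to exhibit $M_+$ with $(m_1,m_2)=(3,4k-4)$ as the total space of a fiber bundle (or, better, as a quotient or submersion image of a space with non-negative curvature) and then invoke O'Neill's formula together with Gray–O'Neill/Cheeger-type constructions. Recall that for $m=3$ the Clifford algebra $\mathcal{C}_2$ has irreducible module of dimension $\delta(3)=4$, so $l=4k$ and $M_+$ is diffeomorphic to $S(\eta)$, the sphere bundle of the rank-$(4k-4)$ bundle $\eta$ over $S^{4k-1}$. The key structural observation I would exploit is that, as indicated in the excerpt, $M_+$ for $m=3$ is diffeomorphic to $S(\eta)$ where $\eta$ sits inside $TS^{4k-1}$ as the orthogonal complement of the $3$-plane spanned by $\{z,E_1z,E_2z\}$; the three operators $E_1,E_2,E_1E_2$ give a quaternionic structure on $\mathbb{R}^{4k}$, so $S^{4k-1}\subset\mathbb{H}^k$ and the $3$-plane at $z$ is exactly $\mathrm{Im}(\mathbb{H})\cdot z$, i.e. the tangent to the Hopf/quaternionic-Hopf fibration $S^3\hookrightarrow S^{4k-1}\to \mathbb{HP}^{k-1}$. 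Hence $\eta$ is (up to the vertical $S^3$-direction) the pullback to $S^{4k-1}$ of the tautological-type bundle on $\mathbb{HP}^{k-1}$, and $S(\eta)$ is the space of pairs $(z,w)$ with $z\in S^{4k-1}$, $w$ a unit vector $\perp \mathbb{H}z$ — which is precisely the quaternionic Stiefel-like manifold $\{(z,w): |z|=|w|=1,\ \langle z,w\rangle_{\mathbb H}=0\}$ modulo nothing, i.e. a homogeneous-looking space but with the \emph{indefinite} Clifford structure it need not be $V_2(\mathbb{H}^k)$ metrically.

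The main steps would be: (1) Realize $M_+$ as the base or total space of a Riemannian submersion from a biinvariant (hence non-negatively curved) compact Lie group or from a product of round spheres. Concretely, I would try to write $M_+ = (S^{4k-1}\times S^{4k-1})/\!\sim$ or as an $S^3$- (or $\mathrm{Sp}(1)$-) quotient: the group $\mathrm{Sp}(1)$ acts on $S^{4k-1}\subset\mathbb{H}^k$ diagonally by right quaternion multiplication, and $M_+$ should appear as an associated bundle $S^{4k-1}\times_{\mathrm{Sp}(1)} F$ for a suitable non-negatively curved fiber $F$ with $\mathrm{Sp}(1)$-action, using Cheeger's trick (deformation of a biinvariant metric) to keep curvature $\geq 0$. (2) Alternatively — and this is the route I'd bet on — use Proposition \ref{M_-} style reasoning: the excerpt already cites Proposition 1 of \cite{Wa88}, which for $m=3$ should identify $M_+$ (not just $M_-$) with a quotient of a Lie group, because $m=3$ is small enough that the relevant isotropy representation is still group-theoretic even in the inhomogeneous case. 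So I would check whether $M_+^{8k-6}$ with $(m_1,m_2)=(3,4k-4)$ carries a transitive-enough action, or is the quotient $\mathrm{Sp}(k)/(\mathrm{Sp}(k-2)\times \Delta\mathrm{Sp}(1))$ or similar, on which the biinvariant metric descends with $\sec\geq 0$ by O'Neill. (3) If no single submersion works, fall back on the \emph{connected sum / bundle} techniques of Grove–Ziller: a bundle over $S^4$ (or $S^{4k-1}$) with compact structure group and fiber a cohomogeneity-one manifold admits $\sec\geq 0$; one must verify $\eta$ (equivalently its classifying map into $\mathrm{SO}(4k-4)$, which Theorem \ref{characteristic maps}-type analysis for $m=3$ makes explicit) factors through such a structure group.

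The hard part will be step (1)/(2): proving that the \emph{inhomogeneous} $M_+$ still arises from a non-negatively curved model. The inhomogeneity means $M_+$ is not itself a homogeneous space, so one cannot simply pull back a biinvariant metric; the content is that although $M_+$ is not homogeneous it is nonetheless the total space of a bundle associated to a principal bundle with compact structure group and non-negatively curved fiber, and that Cheeger's deformation (or the Grove–Ziller construction for the fiber being a cohomogeneity-one $\mathrm{Sp}(1)$-manifold, e.g. $S^{4k-5}$ with a suitable $\mathrm{Sp}(1)$-action) genuinely produces $\sec\geq 0$ on the total space. Verifying the compatibility of the $\mathrm{Sp}(1)$-action coming from the Clifford operators $\{E_1,E_2\}$ with a fiber metric of $\sec\geq0$, and checking the O'Neill horizontal-curvature terms are non-negative, is where the real work lies; the rest (identifying $M_+\cong S(\eta)$, computing $l=4k$, unwinding the quaternionic structure) is routine given the earlier sections.
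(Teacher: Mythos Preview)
Your overall strategy---realize $M_+$ as an associated bundle with compact structure group and nonnegatively curved model fiber, then apply Gray--O'Neill---is exactly the paper's. But your concrete identification of the bundle is wrong, and the error propagates.

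For $m=3$ there are only \emph{two} operators $E_1,E_2$, so the $m$-plane is $\mathrm{Span}\{z,E_1z,E_2z\}$, not $\mathrm{Im}(\mathbb H)\cdot z$. The direction $E_1E_2z$ (morally $\mathrm{k}z$) lies \emph{inside} $\eta$, not in its complement. Hence $\eta$ has real rank $4k-3$, the sphere-bundle fiber is $S^{4k-4}$, and, in suitable coordinates, $M_+=\{(x,y)\in\mathbb H^k\oplus\mathbb H^k:|x|=|y|=\tfrac{1}{\sqrt2},\ \langle x,y\rangle_{\mathbb H}\in\mathbb R\}$. Your description $\{(z,w):\langle z,w\rangle_{\mathbb H}=0\}$ is the quaternionic Stiefel manifold $V_2(\mathbb H^k)$, i.e.\ the $m=4$ \emph{definite} case, which is homogeneous; that is why your route~(2) looked plausible but is in fact a dead end here.

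The paper's key construction uses the \emph{large} structure group $\mathrm{Sp}(k-1)$ rather than $\mathrm{Sp}(1)$. One checks that the diagonal $\mathrm{Sp}(k)$-action on $M_+$ is cohomogeneity one, and that
\[
\Phi:\mathrm{Sp}(k)\times S^{4k-4}\longrightarrow M_+,\qquad (A,z)\longmapsto \tfrac{1}{\sqrt2}(A_1,\,zA),
\]
with $A_1$ the first row of $A$ and $S^{4k-4}\subset\mathbb R\oplus\mathbb H^{k-1}\subset\mathbb H^k$, is a surjective submersion whose fibers are precisely the orbits of the free $\mathrm{Sp}(k-1)$-action $h\cdot(A,z)=(hA,zh^{-1})$. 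Thus $M_+\cong\mathrm{Sp}(k)\times_{\mathrm{Sp}(k-1)}S^{4k-4}$. Now the bi-invariant metric on $\mathrm{Sp}(k)$, the round metric on $S^{4k-4}$, and Gray--O'Neill for the free isometric $\mathrm{Sp}(k-1)$-action give $\sec\geq0$ on $M_+$ directly---no Cheeger deformation, no Grove--Ziller machinery.

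The ingredient you were missing is that the extra real coordinate in $S^{4k-4}\subset\mathbb R\oplus\mathbb H^{k-1}$ records the value $\langle x,y\rangle_{\mathbb H}\in\mathbb R$; $\mathrm{Sp}(k-1)$ fixes this coordinate, which is exactly why the action on $M_+$ is cohomogeneity one rather than transitive, yet the associated-bundle description remains completely elementary.
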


\begin{rem}\rm{
In this case, the corresponding isoparametric foliation is inhomogeneous, as discovered in \cite{OT75}. Moreover, for $k\geq 3$, $M_+$ is not a homogeneous space(cf. \cite{Kra02}).}
\end{rem}

\begin{rem}\rm{
For $( m_1, m_2)=(8, 7)$ and the definite case, the corresponding isoparametric foliation is inhomogeneous. However, $M_+^{22}$ is homogeneously embedded in $S^{31}$ (cf. \cite{FKM81}). More precisely, $M_+^{22}$ is exactly diffeomorphic to $V_2(\mathbb{O}^2)=\{A\in M_{2\times 2}(\mathbb{O})~|~A\bar{A}^t=I_2\}\cong Spin(9)/G_2$, which admits a metric with non-negative sectional curvature.}
\end{rem}

\begin{rem}\rm{
According to \cite{FKM81},
the hypersurface $M$ of OT-FKM type is diffeomorphic to $M_+\times S^m$. It suggests that we only need to consider $M_+$ of OT-FKM type in Problem 1.3. It is still unknown whether each $M_+$ admits a metric with non-negative sectional curvature.}
\end{rem}

The present paper is organized as follows. In Section 2, we explicitly construct the characteristic map of $\eta$ for $m=4$ or $8$. In Section 3, applications related to harmonic maps will be addressed. In Section 4, we investigate the existence of metrics with non-negative curvature on isoparametric families.

\section{Characteristic maps of the vector bundle $\eta$}
In this section, the main purpose is to determine the characteristic maps of $\eta$ for $m=4$ in the indefinite case and for all cases with $m=8$.

\subsection{The $(m_1, m_2)=(4, 4k-5)$ case}
For $m=1, 2,$ or $4$ in the definite case, as we mentioned, the result has been given by Steenrod in \cite{St51}. However, to warm up, we will start with the definite case of $m=4$.

\subsubsection{}\hspace{-2.5mm}\textbf{The definite case of} $(m_1, m_2)=(4, 4k-5)$.\label{def 4}
In this case, the isoparametric hypersurface of OT-FKM type is homogeneous, and the focal submanifold $M_+$ is diffeomorphic to the quaternionic Stiefel manifold $V_2(\mathbb{H}^k)$, which is an $S^{4k-5}$-bundle over $S^{4k-1}$.

For $z=(z_1,\cdots, z_k), w=(w_1,\cdots, w_k)\in \mathbb{H}^{k}$, define the quaternionic inner product by $\langle z, w\rangle_{\mathbb{H}}:=\sum_{i=1}^{k}z_i\overline{w_i}$, and the real inner product by $\langle z, w\rangle:=\mathrm{Re}\langle z, w\rangle_{\mathbb{H}}$. Let $S^{4k-1}$ be the unit sphere in $\mathbb{H}^{k}$ and $S^{4k-2}$ be its equator defined by $\mathrm{Re}(z_k)=0$. Let $N=(0,\cdots,0, 1)\in \mathbb{H}^{k}$.
Then one can construct an embedding
$$\psi_1: (S^{4k-1}\setminus \{-N\})\times S^{4k-5}\rightarrow V_2(\mathbb{H}^{k}), \quad \psi_1(z, X)=(z, Y),$$
where
\begin{eqnarray*}
X\in \mathbb{H}^{k-1}&\cong&\mathbb{H}^{k-1}\oplus 0\subset \mathbb{H}^{k}, ~~W=(z_1,\cdots, z_{k-1}),~~ b=(1+z_k)(1+\overline{z_k})^{-1},\\
&&Y=X\left(
\begin{array}{cc}
I_{k-1}-\overline{W}^t(1+\overline{z_k})^{-1}W,& -\overline{W}^tb\\
\end{array}
\right),
\end{eqnarray*}
and another embedding
$$\psi_2: (S^{4k-1}\setminus \{N\})\times S^{4k-5}\rightarrow V_2(\mathbb{H}^{k}), \quad \psi_2(z, X)=(z, Y),$$
where
\begin{eqnarray*}
X\in \mathbb{H}^{k-1}&\cong&\mathbb{H}^{k-1}\oplus 0\subset \mathbb{H}^{k}, ~~W=(z_1,\cdots, z_{k-1}),~~ b'=(1-z_k)(1-\overline{z_k})^{-1},\\
&&Y=X\left(
\begin{array}{cc}
I_{k-1}-\overline{W}^t(1-\overline{z_k})^{-1}W,& \overline{W}^tb'\\
\end{array}
\right).
\end{eqnarray*}
Then 
we determine in our case,
restricted to $S^{4k-2}$,
$$\psi_1^{-1}(\psi_2(z, X))=(z, XA), ~~\forall~ z\in S^{4k-2},$$
where using $\overline{z_k}=-z_k$, we obtain
\begin{eqnarray*}
A&=&\left(
\begin{array}{cc}
I_{k-1}-\overline{W}^t(1-\overline{z_k})^{-1}W,& \overline{W}^tb' \\
\end{array}
\right)\left(
\begin{array}{c}
I_{k-1}-\overline{W}^t(1+z_k)^{-1}W\vspace{3mm}\\
-\overline{b}W\\
\end{array}
\right)\\
&=&\left(
\begin{array}{c}
I_{k-1}-2\overline{W}^t(1+z_k)^{-2}W\\
\end{array}
\right).
\end{eqnarray*}
In this way, according to the definition in \cite{St51}, the characteristic map is given by
\begin{eqnarray}
\chi:~~ S^{4k-2}&\longrightarrow& \mathrm{Sp}(k-1)\label{10}\\
z&\mapsto& \chi(z)=I_{k-1}-2\overline{W}^t(1+z_k)^{-2}W.\nonumber
\end{eqnarray}

\begin{rem}\rm{
The characteristic map $\chi:S^{4k-2} \rightarrow \mathrm{Sp}(k-1)$ constructed above represents a generator of the homotopy group $\pi_{4k-2}\mathrm{Sp}(k-1)$. This statement should be known. However, for self-containedness, a proof
is given as follows.

Consider the fibration $\mathrm{Sp}(k-1)\hookrightarrow \mathrm{Sp}(k)\overset{p}{\rightarrow} S^{4k-1}$ and the associated
homotopy exact sequence
\begin{equation*}\label{fgh}
\pi_{4k-1}\mathrm{Sp}(k)\overset{p_{\ast}}{\rightarrow} \pi_{4k-1}S^{4k-1} \overset{\Delta}{\rightarrow}\pi_{4k-2}\mathrm{Sp}(k-1) \overset{}{\rightarrow} \pi_{4k-2}\mathrm{Sp}(k)\rightarrow 0.
\end{equation*}
By the Bott periodicity theorem for symplectic groups(cf. \cite{Bo59}), one has
$$\pi_{4k-2}\mathrm{Sp}(k)\cong \pi_{4k-2}\mathbf{Sp}=0.$$
Thus $\Delta$ is surjective, and $\pi_{4k-2}\mathrm{Sp}(k-1)$ is generated by $\Delta [Id_{4k-1}]$.
It follows from 18.4 of \cite{St51}, that $[\chi]=\Delta [Id_{4k-1}]$, and consequently, $[\chi]$ also generates $\pi_{4k-2}\mathrm{Sp}(k-1)$.
Moreover,
$$\pi_{4k-2}\mathrm{Sp}(k-1)\cong \pi_{4k-1}S^{4k-1}/\mathrm{Ker} \Delta\cong \pi_{4k-1}S^{4k-1}/\mathrm{Im} p_{\ast}.$$
By (3) of Theorem 6.13 on P. 216 in \cite{MT91},
\begin{eqnarray*}
&&p_{\ast}\pi_{4k-1}\mathrm{Sp}(k)=\small{\left\{\begin{array}{ll}
(2k-1)!\pi_{4k-1}S^{4k-1}~for~k~odd,\\
2(2k-1)!\pi_{4k-1}S^{4k-1}~for~k~even,
\end{array}\right.}\nonumber
\end{eqnarray*}
where we have made a correction of the statement on P. 216 in \cite{MT91} for the case with even $k$.
Therefore,
\begin{eqnarray*}
&&\pi_{4k-2}\mathrm{Sp}(k-1)=\small{\left\{\begin{array}{ll}
(2k-1)! \;~for~k~odd,\\
2(2k-1)! \;~for~k~even.
\end{array}\right.}\nonumber
\end{eqnarray*}
Here, we have also made corrections for the corresponding statement at P. 218 in \cite{MT91}.}
\end{rem}

\begin{rem}\label{1.3}\rm{
In this case with $m=4, l=k\delta(4)=4k$, combining the characteristic map $\chi:  S^{4k-2}\rightarrow \mathrm{Sp}(k-1)$ with
the natural projection to the first row $\pi: \mathrm{Sp}(k-1)\rightarrow S^{4k-5},$ we obtain
$$\sigma_k:=\pi\circ\chi: ~~S^{4k-2}\rightarrow  S^{4k-5},$$
which is defined by
$$\sigma_k(z_1,\cdots,z_k)=(1-2\overline{z_1}(1+z_k)^{-2}z_1, -2\overline{z_1}(1+z_k)^{-2}z_2,\cdots,-2\overline{z_1}(1+z_k)^{-2}z_{k-1}),$$
where $\mathrm{Re}(z_k)=0$.

When $k=2$, the homotopy class $[\sigma_2]$ of
\begin{eqnarray*}
\sigma_2: ~~S^{6}&\longrightarrow&  S^{3}\\
(z_1, z_2)&\mapsto& \sigma_2(z_1,z_2)=1-2\overline{z_1}(1+z_2)^{-2}z_1
\end{eqnarray*}
generates the homotopy group $\pi_6S^3 = Z_{12}$.

When $k\geq 3$, $[\sigma_k]\in \pi_{4k-2}S^{4k-5}$ which is isomorphic to $\pi_3^S= \mathbb{Z}_{24}$, the third stable homotopy group of spheres.
To investigate the homotopy class $[\sigma_k]$, we recall some basic knowledge on homotopy groups of spheres.
Following \cite{St51}, we define
$$\varrho: S^3\rightarrow \mathrm{SO}(3), \quad x\mapsto \varrho(x),$$
where $\varrho(x)(y)=xy\overline{x}$, $x,y \in S^3\subset \mathbb{H}$ and $\mathrm{Re}(y)=0$. Moreover, define
$$\zeta: S^3\rightarrow \mathrm{SO}(4), \quad x\mapsto \zeta(x),$$
where $\zeta(x)(y)=xy$, $x \in S^3\subset\mathbb{H}$ and $y\in \mathbb{H}$.
To fix the notation, for a continuous function $f: S^m\times S^n \rightarrow S^r$, we define the Hopf construction
\begin{eqnarray*}
H(f): ~~S^{m+n+1}&\longrightarrow& S^{r+1}\\
\big((\cos{t})x, \,(\sin{t})y\big)&\mapsto&(-\cos^2{t}+\sin^2{t}, ~~2\sin{t}\cos{t}f(x, y)),
\end{eqnarray*}
where $(x, y)\in S^m\times S^n, t\in [0, \frac{\pi}{2}]$. In fact, this construction induces a homomorphism $H$, which is actually the $J$-homomorphism
$$H: \pi_{m}\mathrm{SO}(n+1)\rightarrow \pi_{n+m+1}S^{n+1}, \quad [\varpi]\mapsto [H(\varpi)],$$
where
\begin{equation}\label{8}
H(\varpi)\big((\cos{t})x, (\sin{t})y\big)=\left(-\cos^2{t}+\sin^2{t}, 2\sin{t}\cos{t}\varpi(x)(y)\right),
\end{equation}
for $(x, y)\in S^m\times S^n, t\in [0, \frac{\pi}{2}].$

According to \cite{Hu59}, $\pi_7 S^4\cong\mathbb{Z}\oplus \mathbb{Z}_{12}$, where the free part is generated by the homotopy class $[H(\zeta)]$ of the Hopf fibration $H(\zeta): S^7 \rightarrow  S^4$, and the torsion part $\mathbb{Z}_{12}$ is generated by the
homotopy class $[\Sigma H(\varrho)]$ of
$\Sigma H(\varrho)$, the suspension of  $H(\varrho)$.
For $p\geq 1$, $\pi_{7+p}S^{4+p}\cong\mathbb{Z}_{24}$ is generated by $[\Sigma^pH(\zeta)]$. In particular, $[\Sigma ^2 H(\varrho)]=2[ \Sigma H(\zeta)]$ in $\pi _8S^5$(see 23.6 of \cite{St51}).

Following from the proof of Lemma 1 in \cite{Wh46, Wh47},
there exists a map $\psi: S^{4k-2}\rightarrow S^{4k-5}$ defined by
\begin{equation}\label{82}
\psi(z_1, z_2, \cdots, z_k)=\left\{\begin{array}{ll}
(1-2|z_1|^2+2\frac{\overline{z_1}z_kz_1}{|z_1|}, -2\overline{z_1}z_2,\cdots, -2\overline{z_1}z_{k-1}), ~~\,z_1\neq 0,\vspace{2mm}\\
(1-2|z_1|^2, -2\overline{z_1}z_2,\cdots, -2\overline{z_1}z_{k-1}), ~~\,\,z_1=0,
\end{array}\right.
\end{equation}
where $z_1, \cdots.z_{k-1}\in \mathbb{H}$, $z_k\in \mathrm{Im}\mathbb{H}$, such that
$\sigma_k$ is homotopic to $\psi$. In particular, $[H(\varrho)]=-[\sigma_2]\in \pi_{6}S^3$.
On the other hand, choose $\omega: S^3\rightarrow \mathrm{SO}(4k-5)$ by
\begin{equation}\label{83}
\omega(x_1)(x_2,\cdots, x_k)=(\overline{x_1}x_kx_1, -\overline{x_1}x_2,\cdots, -\overline{x_1}x_{k-1}),
\end{equation}
where $x_1\in S^3\subset\mathbb{H}$, $(x_2,\cdots,x_{k-1})\in \mathbb{H}^{k-2}$, $x_k\in \mathrm{Im} \mathbb{H}$.
Clearly, $\psi=H(\omega)$, since if we substitute $(\cos t)x = x_1$ and $(\sin t)y = (x_2, . . . , x_k)$ into (\ref{8}), we obtain (\ref{82}).
Moreover, define $\omega_0: S^3\rightarrow \mathrm{SO}(4k-5)$ by
$$\omega_0(x_1)(x_2,\cdots, x_k)=(\overline{x_1}x_kx_1, x_2,\cdots, x_{k-1}),$$
and for $1\leq i\leq k-2$, define $\omega_i: S^3\rightarrow \mathrm{SO}(4k-5)$ by
$$\omega_i(x_1)(x_2,\cdots, x_k)=(x_2,\cdots, x_i, -\overline{x_1}x_{i+1}, x_{i+2},\cdots, x_{k}).$$
It follows that as homotopy classes in $\pi_{4k-2}S^{4k-5}$,
$$[H(\omega)]=[H(\omega_0)]+[H(\omega_1)]+\cdots+[H(\omega_{k-2})],$$
$$[H(\omega_i)]=-[\Sigma^{4k-9} H(\zeta)],~ i=1,\cdots, k-2,$$
and
$$[H(\omega_0)]=-[\Sigma^{4k-8} H(\varrho)].$$
Therefore, for $k\geq 3$,
$$[\sigma_k]=-(k-2)[\Sigma^{4k-9}H(\zeta)]-[\Sigma^{4k-8} H(\varrho)]=-k[\Sigma^{4k-9}H(\zeta)]\equiv-k~(\mathrm{mod}~24).$$

As is well known, for an $S^p$-bundle over $S^q$ and its characteristic map $\chi: S^{q-1}\rightarrow O(p+1)$, the sphere bundle is trivial if and only if the characteristic map $\chi$ is homotopic to a constant map. Moreover, considering the natural projection to the first row $\pi: O(p+1)\rightarrow S^{p}$, one knows that the sphere bundle admits a cross-section if and only if the map $\pi\circ\chi: S^{q-1}\rightarrow S^p$ is homotopic to a constant map.

Consequently, in this case, the sphere bundle $S^{4k-5}\hookrightarrow M_+^{8k-6}\rightarrow S^{4k-1}$ admits a cross-section if and only if $k$ can be divided by $24$ (see also \cite{Ja58}).}
\end{rem}

In the end of this subsection, for completeness, the corresponding results for the cases of $m=1$ and $2$ are summarized in the following two remarks.

\begin{rem}\rm{
For $m=1$, and $l=k$, combining the characteristic map $\chi:  S^{k-2}\rightarrow \mathrm{O}(k-1)$ of $\eta$ with
the natural projection to the first row $\pi: O(k-1)\rightarrow S^{k-2},$ one gets a quadratic map
$\iota:=\pi\circ\chi: S^{k-2}\rightarrow  S^{k-2}.$
According to Theorem 23.4 in \cite{St51}, $[\iota]\in \pi_{k-2}S^{k-2}\cong\mathbb{Z}$
for $k\geq 3$ and the degree of $\iota$ is $1+(-1)^{k-1}$.}
\end{rem}

\begin{rem}\rm{
For $m=2$ and $l=k\delta(2)=2k$, the homotopy class of the characteristic map $\chi: ~~S^{2k-2}\longrightarrow\mathrm{U}(k-1)$ for $\eta$
generates $\pi_{2k-2}U(k-1)$(cf. \cite{Ker60}). Moreover, according to Bott \cite{Bo58}, $\pi_{2k-2}U(k-1)$ is isomorphic to
$\mathbb{Z}_{(k-1)!}$. Combining the characteristic map $\chi:  S^{2k-2}\rightarrow \mathrm{U}(k-1)$ with
the natural projection to the first row $\pi: U(k-1)\rightarrow S^{2k-3},$ one has
$\varsigma:=\pi\circ\chi: S^{2k-2}\rightarrow  S^{2k-3}$  with
$[\varsigma]\in \pi_{2k-2}S^{2k-3}$. For $k\geq 3$, it is well known that $\pi_{2k-2}S^{2k-3}$ is isomorphic to $\pi_1^S= \mathbb{Z}_2$,
the first stable homotopy group of spheres. Moreover, according to Theorem 24.3 in \cite{St51}, $[\varsigma]=0$ when $k$ is even,
and $[\varsigma]=1$, the generator when $k$ is odd.}
\end{rem}


\subsubsection{}\hspace{-2.5mm}\textbf{The general case of} $(m_1, m_2)=(4, 4k-5)$.
Recall that the quaternionic inner product is defined by $\langle z, w\rangle_{\mathbb{H}}=\sum_{i=1}^{k}z_i\overline{w_i}$ for $z=(z_1,\cdots, z_k), w=(w_1,\cdots, w_k)$ $\in \mathbb{H}^{k}$. Then the real inner product $\langle z, w\rangle$ is equal to $\mathrm{Re}\langle z, w\rangle_{\mathbb{H}}$.
Moreover, for $0\leq p\leq k-1$, define
\begin{equation}\label{q1}
\langle z, w\rangle_p:=\sum_{i=1}^{p}z_i\overline{w}_i+\sum_{j=p+1}^{k-1}w_j\overline{z}_j+z_{k}\overline{w}_k.
\end{equation}

\begin{prop}\label{prop4}
\begin{itemize}
\item[(1).] For $z=(z_1,\cdots, z_k), w=(w_1,\cdots, w_k)\in \mathbb{H}^{k}$, $\langle z, w\rangle_p=0$
if and only if $\mathrm{Re}\langle z, w\rangle_{\mathbb{H}}=0$ and $\sum_{i=1}^{p}z_i\overline{w}_i-\sum_{j=p+1}^{k-1}z_j\overline{w}_j+z_{k}\overline{w}_k\in \mathbb{R}$.\vspace{1mm}

\item[(2).] $N_+:=\{(z, w)\in \mathbb{H}^{k}\oplus\mathbb{H}^{k}~|~|z|=|w|=1, \langle z, w\rangle_p=0\}$
is diffeomorphic to the focal submanifold $M_+$ of OT-FKM type with $(m_1, m_2)=(4, 4k-5)$ and $\mathrm{Tr}(P_0P_1P_2P_3P_4)=-8(2p-k+2)$.
\end{itemize}
\end{prop}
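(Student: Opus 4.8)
\emph{Proof proposal.} The plan is to derive a single quaternionic identity that packages both parts. Fix $z=(z_1,\dots,z_k),\,w=(w_1,\dots,w_k)\in\mathbb{H}^k$ and set $A=\sum_{i=1}^{p}z_i\overline{w}_i$, $B=\sum_{j=p+1}^{k-1}z_j\overline{w}_j$, $C=z_k\overline{w}_k$, so that $\langle z,w\rangle_p=A+\overline{B}+C$, $\langle z,w\rangle_{\mathbb H}=A+B+C$, and the auxiliary expression in (1) is $A-B+C$. Since $\overline{B}-(-B)=2\,\mathrm{Re}(B)\in\mathbb R$, the quaternions $\langle z,w\rangle_p$ and $A-B+C$ have the same imaginary part, while $\mathrm{Re}\langle z,w\rangle_p=\mathrm{Re}(A+B+C)=\mathrm{Re}\langle z,w\rangle_{\mathbb H}$. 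Hence $\langle z,w\rangle_p=0$ if and only if $\mathrm{Re}\langle z,w\rangle_{\mathbb H}=0$ and $A-B+C\in\mathbb R$, which is exactly (1). For the vector part I will use the elementary fact that, with $e_1=i,\,e_2=j,\,e_3=k$, the $e_\alpha$-component of $z_s\overline{w}_s$ equals the real inner product $\langle z_s,e_\alpha w_s\rangle$.

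For (2) I would exhibit an explicit symmetric Clifford system realizing the prescribed trace and then match its $M_+$ with $N_+$ on the nose. Identify $\mathbb R^{l}=\mathbb R^{4k}$ with $\mathbb H^k$; let $L_\alpha$ be left multiplication by $e_\alpha$ in every coordinate and let $\Lambda=\mathrm{diag}(\epsilon_1,\dots,\epsilon_k)$ act by the real scalars $\epsilon_s=+1$ for $s\in\{1,\dots,p\}\cup\{k\}$ and $\epsilon_s=-1$ for $s\in\{p+1,\dots,k-1\}$, and put $E_\alpha:=\Lambda L_\alpha$. One checks at once that $E_1,E_2,E_3$ are skew-symmetric orthogonal transformations with $E_\alpha^2=-I$ and $E_\alpha E_\beta=-E_\beta E_\alpha$ for $\alpha\neq\beta$ (because $\Lambda$ is real-diagonal, hence commutes with the $L_\alpha$ and squares to $I$, and $L_1,L_2,L_3$ satisfy the Clifford relations), so the matrices $P_0,P_1,\dots,P_4$ built from them as in the normal form recalled above form a symmetric Clifford system and produce the OT-FKM family with $(m_1,m_2)=(4,4k-5)$. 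A one-line block multiplication gives $P_0P_1P_2P_3P_4=\mathrm{diag}(E_1E_2E_3,\,E_1E_2E_3)$, and since $e_1e_2e_3=-1$ one has $E_1E_2E_3=\Lambda^3L_1L_2L_3=-\Lambda$; therefore $\mathrm{Tr}(P_0P_1P_2P_3P_4)=2\,\mathrm{Tr}(E_1E_2E_3)=-2\cdot 4\sum_{s}\epsilon_s=-8\big((p+1)-(k-1-p)\big)=-8(2p-k+2)$, as required.

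It remains to identify the two manifolds. From the computation in (1), the $e_\alpha$-component of $\langle z,w\rangle_p$ equals that of $A-B+C$, namely $\sum_s\epsilon_s\langle z_s,e_\alpha w_s\rangle=\langle z,E_\alpha w\rangle$, while its real part is $\langle z,w\rangle$, the Euclidean inner product on $\mathbb R^{4k}$; that is, $\langle z,w\rangle_p=\langle z,w\rangle+\langle z,E_1w\rangle\, i+\langle z,E_2w\rangle\, j+\langle z,E_3w\rangle\, k$. Hence $\langle z,w\rangle_p=0$ is equivalent to the four scalar equations $\langle z,w\rangle=0$ and $\langle z,E_\alpha w\rangle=0$ for $\alpha=1,2,3$, which are precisely the defining equations of $M_+$ for the Clifford system above once the normalization $|z|^2=|w|^2=\tfrac12$ is used. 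Since $\langle\cdot,\cdot\rangle_p$ is $\mathbb R$-bilinear, the rescaling $(z,w)\mapsto(z/\sqrt2,\,w/\sqrt2)$ carries $N_+$ diffeomorphically onto this $M_+$, which completes the argument.

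I expect no genuine obstacle here; the only delicate points are the sign bookkeeping in the quaternionic Clifford relations and the legitimacy of choosing this particular Clifford system, the latter being harmless because any two symmetric Clifford systems on $\mathbb R^{2l}$ with the same value of $\mathrm{Tr}(P_0P_1\cdots P_m)$ give congruent---hence diffeomorphic---focal submanifolds.
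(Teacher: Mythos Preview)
Your proof is correct and follows essentially the same route as the paper: the paper also constructs the Clifford system via the transformations $E_\alpha(z)=(e_\alpha z_1,\dots,e_\alpha z_p,-e_\alpha z_{p+1},\dots,-e_\alpha z_{k-1},e_\alpha z_k)$ (your $\Lambda L_\alpha$), computes $\mathrm{Tr}(P_0\cdots P_4)=-8(2p-k+2)$, and identifies $M_+$ with $N_+$ via part~(1), which the paper simply declares ``obviously true.'' Your write-up is more detailed---you actually justify~(1), spell out the trace computation, and make explicit the identity $\langle z,w\rangle_p=\langle z,w\rangle+\sum_\alpha\langle z,E_\alpha w\rangle e_\alpha$ and the appeal to the FKM congruence classification---but the strategy is the same.
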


\begin{proof}
Obviously, (1) is true. To prove (2), for $z=(z_1,\cdots, z_k)\in\mathbb{H}^{k}$, we define three orthogonal transformations $E_1, E_2, E_3$ on $\mathbb{H}^{k}$ as follows
\begin{eqnarray}\label{E}
E_1(z)&:=&(\mathrm{i}z_1, \mathrm{i}z_2,\cdots, \mathrm{i}z_p, -\mathrm{i}z_{p+1},\cdots, -\mathrm{i}z_{k-1}, \mathrm{i}z_{k}),\nonumber\\
E_2(z)&:=&(\mathrm{j}z_1, \mathrm{j}z_2,\cdots, \mathrm{j}z_p, -\mathrm{j}z_{p+1},\cdots, -\mathrm{j}z_{k-1}, \mathrm{j}z_{k}),\\
E_3(z)&:=&(\mathrm{k}z_1, \mathrm{k}z_2,\cdots, \mathrm{k}z_p, -\mathrm{k}z_{p+1},\cdots, -\mathrm{k}z_{k-1}, \mathrm{k}z_{k}).\nonumber
\end{eqnarray}
Furthermore, for $(z, w)\in \mathbb{H}^{k}\oplus\mathbb{H}^{k}$, define
\begin{eqnarray}\label{P}
P_0(z, w)&=&(z, -w),\nonumber\\
P_1(z, w)&=&(w, z),\nonumber\\
P_2(z, w)&=&(E_1(w), -E_1(z)),\\
P_3(z, w)&=&(E_2(w), -E_2(z)),\nonumber\\
P_4(z, w)&=&(E_3(w), -E_3(z)).\nonumber
\end{eqnarray}
Consequently, $\{P_0, P_1, P_2, P_3, P_4\}$ is a symmetric Clifford system on $\mathbb{R}^{8k}\cong \mathbb{H}^{2k}$.
A direct computation leads to $\mathrm{Tr}(P_0P_1P_2P_3P_4)=-8(2p-k+2)$. Then following from (1),  the focal submanifold $M_+$ associated
with the given Clifford system is determined by
$M_+=\{(z, w)\in \mathbb{H}^{k}\oplus\mathbb{H}^{k}~|~|z|=|w|=\frac{1}{\sqrt{2}}, \langle z, w\rangle_p=0\}$.
Hence $M_+\cong N_+$.
\end{proof}

For each $\varepsilon\in\mathbb{H}$, $W=(w_1,\cdots, w_{k-1})\in \mathbb{H}^{k-1}$ and $p=0 \cdots, k-1$, define
\begin{equation}\label{q2}
\varepsilon\ast_p W:=(\varepsilon w_1,\cdots, \varepsilon w_p, \overline{\varepsilon} w_{p+1},\cdots, \overline{\varepsilon} w_{k-1}).
\end{equation}
Moreover, for $X=(x_1,\cdots, x_{k-1}),W \in \mathbb{H}^{k-1}=$ $\mathbb{H}^{k-1}\oplus 0\subset \mathbb{H}^{k}$,  recall
$$\langle X, W\rangle_p=\sum_{i=1}^{p}x_i\overline{w}_i+\sum_{j=p+1}^{k-1}w_j\overline{x}_j.$$
Now, we are ready to give a proof for
\vspace{2mm}

\noindent \textbf{Theorem \ref{characteristic maps}.} (i).
\emph{For $m=4$ and $\mathrm{Tr}(P_0P_1\cdots P_4)=-8(2p-k+2)$ with $0\leq p\leq k-1$, the characteristic map $\chi: S^{4k-2} \rightarrow \mathrm{SO}(4k-4)$ of $\eta$ is given by
$$\chi(z)(X)=X-2\Big(\langle X, W\rangle_p(1+z_k)^{-2}\Big)\ast_p W,$$
where $z=(z_1,..., z_k)\in S^{4k-2}$, the equator in $S^{4k-1}\subset \mathbb{H}^k$ with $ \mathrm{Re}(z_k)=0$, $W=(z_1,..., z_{k-1})$, and $X\in \mathbb{H}^{k-1}\cong \mathbb{R}^{4k-4}.$ Moreover, $\langle \cdot, \cdot\rangle_p$ and $\ast_p$-operation are defined by (\ref{q1}) and (\ref{q2}), respectively.}
\vspace{2mm}

\begin{proof}
Let $S^{4k-1}$ be the unit sphere in $\mathbb{H}^{k}$ and $S^{4k-2}$ be its equator defined by $\mathrm{Re}(z_k)=0$. Choose the reference point $N=(0,\cdots,0, 1)\in S^{4k-1}$.

To construct the characteristic map, we first construct an embedding
$$\psi_1: (S^{4k-1}\setminus \{-N\})\times S^{4k-5}\rightarrow N_+, \quad \psi_1(z, X)=(z, Y),$$
where $$X\in \mathbb{H}^{k-1}\cong\mathbb{H}^{k-1}\oplus~ 0\subset \mathbb{H}^{k}, ~~\,\,W=(z_1,\cdots, z_{k-1}),$$
$$b=(1+z_k)(1+\overline{z_k})^{-1}, ~~\,\,\alpha=(1+\overline{z_k})^{-1},$$
and
$$Y=\Big(X-(\langle X, W\rangle_p\alpha)\ast_p W, ~~-\langle X, W\rangle_p b\Big).$$
To prove that $\psi_1$ is well-defined, we need the following lemma.

\begin{lem}\label{2.1}
\begin{itemize}
\item[(1).] For any $\varepsilon\in\mathbb{H}$, $W=(z_1,\cdots, z_{k-1})\in \mathbb{H}^{k-1}$,
$$\langle \varepsilon\ast_p W, W\rangle_p=\varepsilon\langle W, W\rangle_{\mathbb{H}};$$
\item[(2).] For any $X=(x_1,\cdots, x_{k-1}), W=(z_1,\cdots, z_{k-1})\in \mathbb{H}^{k-1}$ and $z_k\in\mathbb{H}$,
$$\langle X, \varepsilon\ast_p W\rangle=\mathrm{Re}(\alpha)|\langle X, W\rangle_p|^2,$$
where $\alpha=(1+\overline{z_k})^{-1}$ and $\varepsilon=\langle X, W\rangle_p\alpha$.
\end{itemize}
\end{lem}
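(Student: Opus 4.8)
The plan is to establish both identities by direct computation, relying on two elementary facts about quaternions: for any $a,b\in\mathbb{H}$ one has $\mathrm{Re}(ab)=\mathrm{Re}(ba)$, and for a single $a\in\mathbb{H}$ the quantity $a\overline{a}=\overline{a}a=|a|^2$ is a real scalar, hence central in $\mathbb{H}$.

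For part (1), I would unwind the definition~(\ref{q2}). Writing $\varepsilon\ast_p W=(\varepsilon z_1,\cdots,\varepsilon z_p,\overline{\varepsilon}z_{p+1},\cdots,\overline{\varepsilon}z_{k-1})$ and substituting into $\langle\cdot,\cdot\rangle_p$, the first block contributes $\sum_{i=1}^{p}(\varepsilon z_i)\overline{z_i}=\varepsilon\sum_{i=1}^{p}|z_i|^2$, while the second block contributes $\sum_{j=p+1}^{k-1}z_j\,\overline{\overline{\varepsilon}z_j}=\sum_{j=p+1}^{k-1}z_j\overline{z_j}\,\varepsilon=\bigl(\sum_{j=p+1}^{k-1}|z_j|^2\bigr)\varepsilon$, where I used $\overline{\overline{\varepsilon}z_j}=\overline{z_j}\,\varepsilon$. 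Since each $|z_j|^2$ is a real scalar it commutes with $\varepsilon$, so the two blocks add to $\varepsilon\sum_{i=1}^{k-1}|z_i|^2=\varepsilon\langle W,W\rangle_{\mathbb{H}}$, as claimed.

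For part (2), I would set $A:=\sum_{i=1}^{p}x_i\overline{z_i}$ and $B:=\sum_{j=p+1}^{k-1}x_j\overline{z_j}$. Since $z_j\overline{x_j}=\overline{x_j\overline{z_j}}$, the second block $\sum_{j=p+1}^{k-1}z_j\overline{x_j}$ of $\langle X,W\rangle_p$ equals $\overline{B}$, hence $\langle X,W\rangle_p=A+\overline{B}$; expanding $\langle X,\varepsilon\ast_p W\rangle=\mathrm{Re}\langle X,\varepsilon\ast_p W\rangle_{\mathbb{H}}$ with the same conjugation rules gives $\langle X,\varepsilon\ast_p W\rangle=\mathrm{Re}(A\overline{\varepsilon}+B\varepsilon)$. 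Substituting $\varepsilon=(A+\overline{B})\alpha$ and $\overline{\varepsilon}=\overline{\alpha}(\overline{A}+B)$ and expanding the four resulting products, the cyclic identity $\mathrm{Re}(pq)=\mathrm{Re}(qp)$ turns the diagonal terms into $\mathrm{Re}(A\overline{\alpha}\,\overline{A})=|A|^2\mathrm{Re}(\alpha)$ and $\mathrm{Re}(B\overline{B}\alpha)=|B|^2\mathrm{Re}(\alpha)$, and combines the two cross terms into $\mathrm{Re}\bigl(BA(\alpha+\overline{\alpha})\bigr)=2\,\mathrm{Re}(\alpha)\,\mathrm{Re}(AB)$. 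On the other hand, $|\langle X,W\rangle_p|^2=(A+\overline{B})(\overline{A}+B)=|A|^2+|B|^2+AB+\overline{AB}=|A|^2+|B|^2+2\,\mathrm{Re}(AB)$, so both sides equal $\mathrm{Re}(\alpha)\,\bigl(|A|^2+|B|^2+2\,\mathrm{Re}(AB)\bigr)$.

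These are routine computations; the one point demanding care is the non-commutativity of $\mathbb{H}$, so that no quaternion is ever moved past another except under $\mathrm{Re}(\cdot)$ via the cyclic identity, or when a factor is a real scalar. Keeping the conjugations on the correct side of each product and splitting every sum consistently at the index $p$ is the only (mild) obstacle.
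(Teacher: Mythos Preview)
Your proof is correct and follows the same direct-computation strategy as the paper, relying on the same cyclic identity $\mathrm{Re}(ab)=\mathrm{Re}(ba)$. The paper's argument for part~(2) is a bit more economical: rather than splitting $\langle X,W\rangle_p$ into $A+\overline{B}$ and expanding four products, it observes in one stroke that $\langle X,\varepsilon\ast_p W\rangle=\mathrm{Re}(\overline{\varepsilon}\,\langle X,W\rangle_p)$ and then substitutes $\overline{\varepsilon}=\overline{\alpha}\,\overline{\langle X,W\rangle_p}$ to obtain $\mathrm{Re}(\alpha)\,|\langle X,W\rangle_p|^2$ immediately --- your expression $\mathrm{Re}(A\overline{\varepsilon}+B\varepsilon)$ is exactly this quantity, so the extra expansion is unnecessary but not wrong.
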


\begin{proof}
(1) is clear. For (2),
\begin{eqnarray*}
\langle X, \varepsilon \ast_p W\rangle&=&\mathrm{Re}\langle (x_1,\cdots, x_{k-1}), (\varepsilon z_1,\cdots, \varepsilon z_p, \overline{\varepsilon} z_{p+1},\cdots, \overline{\varepsilon} z_{k-1})\rangle_{\mathbb{H}}\nonumber\\
&=&\mathrm{Re}(x_1\overline{\varepsilon}\overline{z_1}+\cdots+x_{p}\overline{\varepsilon}\overline{z_{p}}+\overline{\varepsilon}z_{p+1}\overline{x}_{p+1}+\cdots+
\overline{\varepsilon}z_{k-1}\overline{x}_{k-1})\nonumber\\
&=&\mathrm{Re}(\overline{\varepsilon}\langle X, W\rangle_p)\nonumber\\
&=&\mathrm{Re}(\overline{\alpha}\overline{\langle X, W\rangle_p}\langle X, W\rangle_p)\nonumber\\
&=&\mathrm{Re}(\alpha)|\langle X, W\rangle_p|^2.
\end{eqnarray*}
During the proof, the following identity is used
$$\mathrm{Re}(xy)=\mathrm{Re}(yx), \;\textrm{for} ~ x, y \in \mathbb{H}.$$
Now the proof of Lemma \ref{2.1} is complete.
\end{proof}

It follows from (1) of Lemma \ref{2.1} that $\langle Y, z\rangle_p=0$, and from (2) of Lemma \ref{2.1}
that $|Y|^2=1$. Hence, $\psi_1$ is well-defined.

Next, construct another embedding
$$\psi_2: (S^{4k-1}\setminus \{N\})\times S^{4k-5}\rightarrow N_+, \quad \psi_2(z, X)=(z, Y),$$
where $$X\in \mathbb{H}^{k-1}\cong\mathbb{H}^{k-1}\oplus ~0\subset \mathbb{H}^{k}, ~~\,\,W=(z_1,\cdots, z_{k-1}),$$
$$b'=(1-z_k)(1-\overline{z_k})^{-1}, ~~\,\,\alpha'=(1-\overline{z_k})^{-1},$$
and
$$Y=\Big(X-(\langle X, W\rangle_p\alpha')\ast_p W, ~~\langle X, W\rangle_p b'\Big).$$
Similarly, $\psi_2$ is well-defined.

Based on $\psi_1, \psi_2$, according to the definition in \cite{St51}, we get the characteristic map
\begin{eqnarray*}
\chi:~~ S^{4k-2}&\rightarrow& \mathrm{SO}(4k-4)\\
 z&\mapsto& \chi(z),
\end{eqnarray*}
where $\chi(z)$ is defined by
$$\chi(z)(X):=X-2\Big(\langle X, W\rangle_p\frac{1}{(1+z_k)^2}\Big)\ast_p W,~~~ \text{for}~X\in \mathbb{H}^{k-1}.$$
In particular, for the definite case $p=k-1$, regarding $\mathrm{Sp}(k-1)$ as a subgroup of $\mathrm{U}(2k-2)$ and in turn a subgroup of $\mathrm{SO}(4k-4)$, this is just the characteristic map (\ref{10}) by Steenrod.

Under the standard identification $\mathbb{H}^{k-1}\cong \mathbb{R}^{4k-4}$, for each $z\in S^{4k-2}$, it follows that $\chi(z)$ is
a real linear transformation and preserves the real inner product. In particular, $\chi(0, z_k)=I_{4k-4}$, $\forall ~(0, z_k)\in S^{4k-2}$. Hence, for each $z\in S^{4k-2}$, $\chi(z)\in \mathrm{SO}(4k-4)$.
Therefore, $\chi$ is well-defined. More precisely, for $X=(X_1, X_2)\in \mathbb{H}^{p}\oplus \mathbb{H}^{k-1-p}$,
$$\chi(z)(X)=\left(X_1-(X_1\overline{W_1}^t+W_2\overline{X_2}^t)\beta W_1, ~~~X_2-\overline{\beta}(W_1\overline{X_1}^t+X_2\overline{W_2}^t)W_2\right),$$
where $z=(W, z_k)\in S^{4k-2}$, $W=(W_1, W_2)\in \mathbb{H}^{p}\oplus \mathbb{H}^{k-1-p}$ and $\beta=2(1+z_k)^{-2}$.
\end{proof}

\begin{rem}\label{1.1}\rm{
When $m=4$, $k=2$ and $p=0$, it holds that $\mathrm{Tr}(P_0P_1P_2P_3P_4)=0$ by Proposition \ref{prop4}, which corresponds to the inhomogeneous case with $(m_1,m_2)=(4,3)$. In this case, the characteristic map is given by
\begin{eqnarray*}
\chi: ~~S^6&\longrightarrow& \mathrm{Sp}(1)\\
z=(z_1, z_2) &\mapsto& \chi(z_1, z_2),
\end{eqnarray*}
where $\mathrm{Re}(z_2)=0$ and $\chi(z_1, z_2)$ is defined by
$$\chi(z_1, z_2)(y)=-\frac{(1+z_2)^2}{(1-z_2)^2}y, ~~\forall~ y\in \mathbb{H}.$$
Define
$$F: S^6\times [0, 1]\rightarrow \mathrm{Sp}(1), ~~(z_1, z_2, t)\mapsto-\frac{(1+tz_2)^2}{(1-tz_2)^2}.$$
Since $|F(z_1, z_2, t)|=1$ for any $(z_1, z_2)\in \mathbb{H}\oplus \mathrm{Im}\mathbb{H}$ and $t\in [0, 1]$, it is clear
that $F$ is well-defined.
Consequently, $\chi$ is homotopic to a constant map, and thus the sphere bundle
$S^{3}\hookrightarrow M_+^{10}\rightarrow S^{7}$ is trivial, $M_+\cong S^7\times S^3$. This is consistent with the result in Theorem 2.3 of \cite{QTY21}.}
\end{rem}

\begin{rem}\label{indef 4 rem}\rm{
In the case with $(m_1,m_2)=(4,4k-5)$ and $0\leq p\leq k-1$, the characteristic map is
$\chi: S^{4k-2} \rightarrow \mathrm{SO}(4k-4),
 z \mapsto \chi(z),$
where $\mathrm{Re}(z_k)=0$ and $\chi(z)$ is defined by
$$\chi(z)(X):=X-2\Big(\langle X, W\rangle_p\frac{1}{(1+z_k)^2}\Big)\ast_p W,~~~ \text{for}~X\in \mathbb{H}^{k-1}.$$
Letting $X=(1,0,\cdots,0)\in S^{4k-5}$, for $W=(z_1,\cdots,z_{k-1})$, we have
\begin{equation*}
\langle X, W\rangle_p=\left\{\begin{array}{ll}
\overline{z_1}, ~~p\geq 1\\
z_1, ~~p=0.
\end{array}\right.
\end{equation*}
Combining with the natural projection $\pi: SO(4k-4)\rightarrow S^{4k-5}$ induced by $X=(1,0,\cdots,0)\in S^{4k-5}$,
we obtain
$$\sigma_{k, p}:=\pi\circ\chi:~~S^{4k-2}\longrightarrow S^{4k-5},$$
which is defined as
\begin{eqnarray}\label{sigma p}
&&\\
&&\sigma_{k, p}(z)=\small{\left\{\begin{array}{ll}
\Big(1-2\overline{z_1}\frac{1}{(1+z_k)^2}z_1, -2\overline{z_1}\frac{1}{(1+z_k)^2}z_ 2,\cdots,-2\overline{z_1}\frac{1}{(1+z_k)^2}z_ p,\\
\hspace{2.8cm} -2\frac{1}{(1-z_k)^2}z_1z_{p+1},
\cdots, -2\frac{1}{(1-z_k)^2}z_1z_{k-1}\Big),~~p\geq 1\vspace{3mm}\\
\Big(1-2\frac{1}{(1-z_k)^2}|z_1|^2, -2\frac{1}{(1-z_k)^2}\overline{z_1}z_2,\cdots,-2\frac{1}{(1-z_k)^2}\overline{z_1}z_{k-1}\Big),~~p=0.
\end{array}\right.}\nonumber
\end{eqnarray}
where $z=(W, z_k)$ and $\mathrm{Re}(z_k)=0$.}
\end{rem}

For the definite case with $p=k-1$, we have dealt with
the homotopy class $[\sigma_{k, p}]=[\sigma_k]$ in Remark \ref{1.3}. From now on, we will focus on the indefinite case, i.e., the case with $p\neq k-1$.
We start with an algebraic fact.
\begin{lem}\label{extend}
When $m=4$, the symmetric Clifford system $\{P_0,\cdots,P_4\}$ of indefinite type on $\mathbb{R}^{8k}$ defined in (\ref{P}) can be extended to a symmetric Clifford system $\{P_0,\cdots, P_5\}$ if and only if $k=2p+2$.
\end{lem}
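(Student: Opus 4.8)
The plan is to analyze when there exists a skew-symmetric orthogonal transformation $E_4$ on $\mathbb{R}^{8k}\cong\mathbb{H}^{2k}$ anticommuting with all of $P_0,\dots,P_4$, equivalently (after conjugating into the block form fixed in the Introduction) when there exists a skew-symmetric orthogonal $E_4$ on $\mathbb{R}^{4k}\cong\mathbb{H}^k$ anticommuting with $E_1,E_2,E_3$ of \eqref{E}. Recall from \cite{FKM81} that extendability of a symmetric Clifford system by one more member is governed by the trace of the product: for $m=4$ the system $\{P_0,\dots,P_4\}$ extends to $\{P_0,\dots,P_5\}$ if and only if it is of \emph{indefinite} type, and indeed one knows that once an $E_4$ is added one can continue up to $m=5$ only in the indefinite branch. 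The key numerical invariant is $\mathrm{Tr}(P_0P_1P_2P_3P_4)$: by \cite{FKM81}, $\{P_0,\dots,P_4\}$ extends to a Clifford system with one more member precisely when $\mathrm{Tr}(P_0\cdots P_4)=0$ (the borderline between definite and indefinite that still admits the next $E$). Since the Proposition preceding the lemma computes $\mathrm{Tr}(P_0P_1P_2P_3P_4)=-8(2p-k+2)$, the condition $\mathrm{Tr}(P_0\cdots P_4)=0$ reads $2p-k+2=0$, i.e. $k=2p+2$. That is exactly the claimed equivalence.

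Concretely, I would argue as follows. \emph{Necessity.} Suppose $E_4$ exists. The product $\Omega:=E_1E_2E_3E_4$ is an orthogonal transformation of $\mathbb{H}^k$ commuting with each $E_\alpha$ (since it anticommutes with each an even number of times) and satisfying $\Omega^2=I$ (using $E_\alpha^2=-I$ and the anticommutation relations — a routine check). Hence $\mathbb{H}^k$ splits into the $\pm1$-eigenspaces of $\Omega$; counting dimensions and matching with the trace formula forces the eigenspace dimensions to be equal, which translates into $k=2p+2$. Equivalently, one observes that $E_1E_2E_3$ already acts as a fixed transformation (on the $i,j,k$-twisted coordinates with the sign pattern of \eqref{E}) whose square is $\pm I$ with a definite sign on each of the two blocks of sizes $p$ and $k-1-p$ (plus the last coordinate), and an anticommuting $E_4$ can be found only when those two blocks balance — i.e. $p = k-1-p+1$, that is $k=2p+2$. \emph{Sufficiency.} When $k=2p+2$, one exhibits $E_4$ explicitly. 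On $\mathbb{H}^k$, pair up the first $p$ coordinates with the last $p$ coordinates among $z_1,\dots,z_{k-1}$ (there are $p$ of each since $k-1-p=p+1$, leaving one unpaired coordinate which one pairs with $z_k$), and let $E_4$ be the block transformation that swaps each such pair with an appropriate sign, e.g. $E_4(z)=(\dots, -\overline{z}_{?},\dots)$-type coordinate permutation-with-sign; one then checks $E_4^2=-I$, $E_4$ skew-symmetric orthogonal, and $E_4E_\alpha+E_\alpha E_4=0$ for $\alpha=1,2,3$ by direct computation on the blocks (the sign pattern in \eqref{E} is precisely designed so that a coordinate swap between a "$+$" block and a "$-$" block anticommutes with multiplication by $i,j,k$-with-signs). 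Finally, lift $E_4$ to $P_5(z,w)=(E_4(w),-E_4(z))$ and verify $\{P_0,\dots,P_5\}$ is a symmetric Clifford system.

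The main obstacle is getting the sufficiency direction clean: one must write down an honest $E_4$ and verify all the anticommutation and orthogonality relations, and the bookkeeping of the sign pattern in \eqref{E} against the pairing of coordinates is where errors creep in. The cleanest route is probably to avoid explicit matrices entirely and instead invoke the representation-theoretic criterion of \cite{FKM81}: an $m$-member symmetric Clifford system on $\mathbb{R}^{2l}$ extends to an $(m+1)$-member one iff the two inequivalent $\mathcal{C}_m$-module structures both occur in the relevant eigenspace decomposition, which for $m=4$ is detected exactly by the vanishing of $\mathrm{Tr}(P_0\cdots P_4)$; combined with the Proposition's trace computation this gives $k=2p+2$ immediately, and one only needs the explicit $E_4$ to make the statement self-contained. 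I would present the trace/representation argument as the proof and relegate the explicit $E_4$ to a remark.
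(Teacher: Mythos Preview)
Your core strategy—reduce extendability to the vanishing of $\mathrm{Tr}(P_0\cdots P_4)$ and then read off $k=2p+2$ from the trace formula—is sound and genuinely different from the paper's proof. The paper argues necessity by an explicit block-matrix computation: writing $\overline{E}_1,\overline{E}_2,\overline{E}_3$ and a putative $\overline{E}_4$ in $3\times 3$ block form (block sizes $4p$, $4(k{-}1{-}p)$, $4$), it uses the symmetry of $\overline{E}_4\overline{E}_3\overline{E}_2\overline{E}_1$ to kill the diagonal blocks of $\overline{E}_4$, and then nondegeneracy forces $k=2p+2$. For sufficiency the paper simply exhibits $E_4(z)=(z_{p+1},\dots,z_{2p},-z_1,\dots,-z_p,z_{2p+2},-z_{2p+1})$. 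Your trace argument for necessity is cleaner: an anticommuting $P_5$ interchanges the $\pm1$-eigenspaces of the symmetric involution $P_0\cdots P_4$, so the trace vanishes.

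That said, the writeup contains genuine errors. The assertion ``extends iff of indefinite type'' is false: indefinite means only $|\mathrm{Tr}(P_0\cdots P_4)|<2l$, whereas extendability requires $\mathrm{Tr}(P_0\cdots P_4)=0$, a strictly stronger condition; the phrase ``borderline between definite and indefinite'' has no meaning here. Your $\Omega=E_1E_2E_3E_4$ \emph{anticommutes} with each $E_\alpha$, not commutes (a product of an even number of pairwise-anticommuting elements anticommutes with each factor); the eigenspace-swapping conclusion survives, but the sentence as written is wrong. Finally, your sufficiency direction is not closed. If you want to invoke \cite{FKM81}, the precise statement needed is: for $m=5\not\equiv 0\pmod 4$ there is, up to algebraic equivalence, a unique symmetric Clifford system on $\mathbb{R}^{8k}$ whenever $k$ is even, and its restriction to $\{P_0,\dots,P_4\}$ necessarily has trace zero; hence every trace-zero $m=4$ system is equivalent to this restriction and therefore extends. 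If instead you want an explicit $E_4$, your sketch is off—no conjugation is involved. The paper's $E_4$ is a pure $\mathbb{H}$-linear coordinate permutation with signs that exchanges the ``$+$''-block $\{1,\dots,p,k\}$ with the ``$-$''-block $\{p{+}1,\dots,k{-}1\}$ of the sign pattern in \eqref{E}; since such a map commutes with left multiplication by $\mathrm{i},\mathrm{j},\mathrm{k}$ but anticommutes with the sign matrix, it anticommutes with each $E_\alpha$.
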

\begin{proof}
For the sufficient part, it is mentioned in Remark 3.2 of \cite{QT16} that when $p=0$ and $k=2$, the indefinite Clifford system $\{P_0,\cdots,P_4\}$ on $\mathbb{R}^{16}$ can be extended to a symmetric Clifford system $\{P_0,\cdots, P_5\}$. To be precise,
for $0\leq p< k-1$ and $k=2p+2$, we construct $E_4$ explicitly as follows
$$E_4(z)=(z_{p+1},\cdots,z_{2p},-z_1,\cdots,-z_p, z_{2p+2}, -z_{2p+1}).$$
Recalling the linear transformations $E_i$ $( i=1,2,3)$ defined in (\ref{E}), 
it is easy to check that $E_4E_i+E_iE_4=0$. Thus $P_5(z,w)=(E_4w,-E_4z)$, combining with $P_0,\cdots,P_4$ in (\ref{P}), constitutes a symmetric Clifford system on $\mathbb{R}^{8k}$.

Now we need only to consider the necessary part.
Denote the skew-symmetric matrices corresponding to $E_1, E_2, E_3$ by $\overline{E}_1, \overline{E}_2, \overline{E}_3$, i.e., $E_i(z)=z\overline{E}_i$ $(i=1,2,3)$. Thus $\overline{E}_1, \overline{E}_2, \overline{E}_3$ are expressed as
\begin{equation*}
\tiny{
\overline{E}_1=\left(\begin{smallmatrix}
D_1&&&&&&\\
&\ddots&&&&&\\
&&D_1&&&&\\
&&&&&&\\
&&&-D_1&&&\\
&&&&\ddots&&\\
&&&&&-D_1&\\
&&&&&&\\
&&&&&&D_1
\end{smallmatrix}\right),
~~\overline{E}_2=\left(\begin{smallmatrix}
D_2&&&&&&\\
&\ddots&&&&&\\
&&D_2&&&&\\
&&&&&&\\
&&&-D_2&&&\\
&&&&\ddots&&\\
&&&&&-D_2&\\
&&&&&&\\
&&&&&&D_2
\end{smallmatrix}\right),}
\end{equation*}

\begin{equation*}
\tiny{
\overline{E}_3=\left(\begin{smallmatrix}
D_3&&&&&&\\
&\ddots&&&&&\\
&&D_3&&&&\\
&&&&&&\\
&&&-D_3&&&\\
&&&&\ddots&&\\
&&&&&-D_3&\\
&&&&&&\\
&&&&&&D_3
\end{smallmatrix}\right),}
\end{equation*}
where
$D_1=  \left(\begin{smallmatrix}
0& 1 & & \\
-1& 0 & &\\
 & & 0 & 1 \\
 & & -1& 0
\end{smallmatrix}\right)
$,~ $D_2=  \left(\begin{smallmatrix}
 & & 1& 0  \\
 & & 0 & -1  \\
 -1& 0&  &  \\
 0&1 & &
\end{smallmatrix}\right)
$~and~~ $D_3=  \left(\begin{smallmatrix}
 & & 0& 1  \\
 & & 1 & 0  \\
 0& -1 &  &  \\
 -1& 0 & &
\end{smallmatrix}\right)$
denote the matrices corresponding to the left multiplication by $\mathrm{i}, \mathrm{j}$ and $\mathrm{k}$, respectively.

Suppose $\{P_0,\cdots,P_4\}$ on $\mathbb{R}^{8k}$ can be extended to $\{P_0,\cdots, P_4, P_5\}$. Then there exists an skew-symmetric matrix $\overline{E}_4$ such that $\{P_0,\cdots, P_4, P_5\}$ can be defined in the same way as in (\ref{P}). Denote the skew-symmetric matrix $\overline{E}_4$ by
$\overline{E}_4=\left(\begin{array}{cc|c}
A& *&  b_1 \\
* & B & b_2\\
\hline
-b_1^t &-b_2^t & a
\end{array}\right),
$
where $A$ is a skew-symmetric $4p\times 4p$ matrix, $B$ is a skew-symmetric $4(k-1-p)\times 4(k-1-p)$ matrix, $a$ is a skew-symmetric $4\times 4$ matrix, 
$b_1$ is a $4p\times 4$ matrix, and $b_2$ is a $4(k-1-p)\times 4$ matrix. Then from $(\overline{E}_4\overline{E}_3\overline{E}_2\overline{E}_1)^t=\overline{E}_1^t\overline{E}_2^t\overline{E}_3^t\overline{E}_4^t=\overline{E}_1\overline{E}_2\overline{E}_3\overline{E}_4=\overline{E}_4\overline{E}_3\overline{E}_2\overline{E}_1$, it follows that
$$\overline{E}_4\overline{E}_3\overline{E}_2\overline{E}_1=\left(\begin{array}{cc|c}
A& *& b_1 \\
*& -B& b_2\\
\hline -b_1^t&b_2^t & a
\end{array}\right)$$
is symmetric, which implies that $b_1=0$ and $A^t=A$, $B^t=B$, $a^t=a$. Thus $A=0$, $B=0$, $a=0$, since $A$, $B$, $a$ are simultaneously
skew-symmetric.
Then combining with the property that $\overline{E}_4$ is non-degenerate,
we conclude $k=2p+2$.
\end{proof}

Now we are in a position to prove Proposition \ref{quaternion}. During the proof, the homotopy class $[\sigma_{k, p}]$ will be also determined.
\vspace{2mm}

\noindent \textbf{Proposition \ref{quaternion}.} \emph{Let $m=4$ and $\mathrm{Tr}(P_0P_1P_2P_3P_4)=-8(2p-k+2)$ with $0\leq p\leq k-1$.
The sphere bundle $S^{4k-5}\hookrightarrow M_+^{8k-6}\rightarrow S^{4k-1}$ associated with the vector bundle $\eta$ admits a cross-section if and only if $k-2-2p$ can be divided by $24$.}
\vspace{2mm}

\begin{proof}
As an immediate consequence of Lemma \ref{extend}, in the case with $k=2p+2$, the fact that the indefinite $\{P_0,\cdots,P_4\}$ can be extended to $\{P_0,\cdots, P_5\}$ implies that the sphere bundle $S^{4k-5}\hookrightarrow M_+^{8k-6}\rightarrow S^{4k-1}$ admits a cross-section, thus $\sigma_{k,p}$ is homotopic to a constant map, and $[\sigma_{k, p}]=0$ in $\pi_{4k-2}S^{4k-5}$.  We should also mention that a definite Clifford system $\{P_0,\cdots,P_4\}$ on $\mathbb{R}^{8k}$ could not extend to $\{P_0,\cdots, P_5\}$. However, even if a Clifford system $\{P_0,\cdots,P_4\}$ is not extendable, a cross-section might still exist.

For $p=0$ and general $k$, recall $\sigma_{k, 0}: S^{4k-2}\rightarrow S^{4k-5}$ in (\ref{sigma p}) defined by
$$\sigma_{k, 0}(z_1, z_2,\cdots, z_k)= \Big(1-2(1-z_k)^{-2}|z_1|^2,~~-2(1-z_k)^{-2}\overline{z_1}z_2,\cdots, -2(1-z_k)^{-2}\overline{z_1}z_{k-1}\Big).
$$
Based on a similar consideration in Remark \ref{1.3}, we define a continuous function
\begin{eqnarray*}
f:~S^3\times S^{4k-6}&\longrightarrow& S^{4k-6}\\
z_1, (z_2,\cdots, z_k)&\mapsto& (z_k, -\overline{z_1}z_2,\cdots,-\overline{z_1}z_{k-1}),
\end{eqnarray*}
where $\mathrm{Re}(z_k)=0$. The corresponding Hopf construction is
\begin{eqnarray*}
\qquad H(f):~~ S^{4k-2}&\longrightarrow& S^{4k-5}\\
\qquad (z_1, z_2,\cdots, z_k)&\mapsto& (1-2|z_1|^2+2|z_1|z_k, -2\overline{z_1}z_2,\cdots,-2\overline{z_1}z_{k-1}).
\end{eqnarray*}
As is well known, to obtain a homotopy $F(t)$ between two maps $f, g$ into a sphere, one needs to take the point dividing the shortest geodesic (great circle arc) joining $f(z)$ and $g(z)$ into a ratio $t:1-t$. To do this, we need $f(z)\neq -g(z)$ for every $z$.  We can see that $(\sigma_{k, 0}+H(f))(z)\neq 0$ for any $z\in S^{4k-1}$ with $\mathrm{Re}(z_k)=0$. For clarity, we make a short explanation as follows and use proof by contradiction. Suppose $(\sigma_{k, 0}+H(f))(z)=0$ for certain $z\in S^{4k-1}$ with $\mathrm{Re}(z_k)=0$. Then we have
\begin{eqnarray}
&&2-2|z_1|^2(\frac{1-|z_k|^2}{(1+|z_k|^2)^2}+1)-2|z_1|(\frac{2|z_1|}{(1+|z_k|^2)^2}+1)z_k=0, \label{1}\\
&&(\frac{1}{(1-z_k)^2}+1)\overline{z_1}z_2=\cdots=(\frac{1}{(1-z_k)^2}+1)\overline{z_1}z_{k-1}=0.\label{2}
\end{eqnarray}
Since $\frac{1}{(1-z_k)^2}+1\neq 0$,
(\ref{2}) implies that $z_1=0$ or $z_2=\cdots=z_{k-1}=0$. However, it follows from (\ref{1}) that $z_1\neq 0$.
Thus $z_2=\cdots=z_{k-1}=0$. Furthermore, from $z_1\neq 0$, $\mathrm{Re}(z_k)=0$ and (\ref{1}), we obtain that $z_k=0$.
From $z\in S^{4k-1}$ and $z_2=...=z_k=0$, it follows that $|z_1|=1$. On the other hand, (\ref{1}) and $z_k=0$
imply that $2|z_1|^2=1$. Now, we get a contradiction.
Therefore, $\sigma_{k, 0}\simeq H(f)$, and thus $$[\sigma_{k, 0}]=[H(f)]\equiv -(k-2) ~(\mathrm{mod}~ 24)$$
in $\pi_{4k-2}S^{4k-5}\cong Z_{24}.$

For $1\leq p< k-1$, recall $\sigma_{k, p}: S^{4k-2}\rightarrow S^{4k-5}$ in (\ref{sigma p}) defined by
\begin{eqnarray*}
\sigma_{k, p}(z)&=&\Big(1-2\overline{z_1}(1+z_k)^{-2}z_1, -2\overline{z_1}(1+z_k)^{-2}z_ 2,\cdots,-2\overline{z_1}(1+z_k)^{-2}z_ p,\\
&&\hspace{3.5cm}-2(1-z_k)^{-2}z_1z_{p+1}, \cdots, -2(1-z_k)^{-2}z_1z_{k-1}\Big).
\end{eqnarray*}
Define $\theta:~S^{4k-2}\longrightarrow S^{4k-5}$ by
$$\theta(z)=\Big(1-2\overline{z_1}(1+z_k)^{-2}z_1, -\frac{2\overline{z_1}z_2}{1+|z_k|^2}, \cdots, -\frac{2\overline{z_1}z_p}{1+|z_k|^2}, -\frac{2z_1z_{p+1}}{1+|z_k|^2}, \cdots,-\frac{2z_1z_{k-1}}{1+|z_k|^2}\Big).
$$
Similar to the case $p=0$, using proof by contradiction, one can easily see that
$(\sigma_{k, p}+\theta)(z)\neq 0$ for any $z\in S^{4k-1}$ with $\mathrm{Re}(z_k)=0$.  Thus
$\sigma_{k, p}\simeq \theta$.

On the other hand, define  $\phi:~S^{4k-2}\longrightarrow S^{4k-5}$ by
$$\phi(z)=\Big(1-2|z_1|^2+2\frac{\overline{z_1}z_kz_1}{|z_1|}, -2\overline{z_1}z_2,\cdots, -2\overline{z_1}z_p, -2z_1z_{p+1}, \cdots, -2z_1z_{k-1}\Big).$$
Again, using proof by contradiction, one gets
$(\phi+\theta)(z)\neq 0$ for any $z\in S^{4k-1}$ with $\mathrm{Re}(z_k)=0$. Thus
$\phi\simeq\theta\simeq \sigma_{k, p}$.

Consequently, by virtue of the discussion in Remark \ref{1.3}, we obtain
\begin{eqnarray*}
[\sigma_{k, p}]&=&[\theta]=[\phi]\\
&=&[H(\omega_0)]+[H(\omega_1)]+\cdots+[H(\omega_p)]+(k-1-p)[\Sigma^{4k-9}H(\zeta)]\\
&=&-[\Sigma^{4k-8}H(\varrho)]-(p-1)[\Sigma^{4k-9}H(\zeta)]+(k-1-p)[\Sigma^{4k-9}H(\zeta)]\\
&\equiv& k-2-2p~(\mathrm{mod}~24).
\end{eqnarray*}

Therefore, the sphere bundle $S^{4k-5}\hookrightarrow M_+^{8k-6}\rightarrow S^{4k-1}$ admits a cross-section if and only if $k-2-2p$ can be divided by $24$.
\end{proof}

\subsubsection{}\hspace{-2.5mm}\textbf{Natural questions.}\label{Natural questions}
For $0\leq p\leq k-1$, define
$$N_{p, k, q}(\mathbb{H}):=\{ A=(a_1^t,\cdots, a_q^t)^t\in M_{q\times k}(\mathbb{H})~|~a_i\in \mathbb{H}^k, \langle a_i, a_j\rangle_{p}=\delta_{ij}, i,j=1,\cdots,q\}.$$
Clearly, $N_{k-1, k, k}(\mathbb{H})$=$\mathrm{Sp}(k)$. The natural questions are as follows:

(1). For $0\leq p\leq k-2$, does $N_{p, k, q}(\mathbb{H})$ admit a smooth manifold structure?

(2). For $0\leq p\leq k-2$, if (1) is true, does $N_{p, k, q}(\mathbb{H})$ admit a smooth metric with nice curvature property?
\begin{rem}\rm{
1). If $k=2$ and $p=0$, then $N_{0, 2, 2}(\mathbb{H})\cong S^3\times S^7$.

2). Very recently, the authors proved that $N_{p, k, 3}(\mathbb{H})$ admits a smooth manifold structure in \cite{QTY22}. However, the topological and geometric properties of such manifolds still need to be deeply explored.}
\end{rem}


\subsection{The $(m_1, m_2)=(8, 8k-9)$ case}
For $m=8$ and $l=8k$, let $\{P_0, P_1,\cdots, P_8\}$ be a symmetric Clifford system on $\mathbb{R}^{16k}$. The definite and indefinite cases will be dealt with in subsections \ref{def 8} and \ref{indef 8}, respectively.
\subsubsection{}\hspace{-2.5mm}\textbf{The definite case.}\label{def 8}
In this subsection, we consider the definite case with
$$P_0P_1\cdot\cdot\cdot P_8=\pm I_{16k}.$$

For $(u,v)\in \mathbb{O}^k\oplus\mathbb{O}^k=\mathbb{R}^{16k}$, write $u=(u_1, \cdots, u_k)$ and $v=(v_1, \cdots, v_k)$. Construct a symmetric Clifford system $\{P_0,\ldots, P_8\}$ on $\R^{16k}$ as follows:
\begin{equation}\label{clifford system}
P_0(u ,v)=(u, -v),\quad P_1(u, v)=(v, u),\quad P_{1+\alpha}(u, v)=(E_{\alpha}v, -E_{\alpha}u),
\end{equation}
where for the standard orthonormal basis $\{1, e_1, \cdots, e_7\}$ of the octonions $\mathbb{O}$ (Cayley numbers), $E_{\alpha}$ acts on $u$ or $v$ by
\begin{equation*}\label{E action}
E_{\alpha}u=(e_{\alpha}u_1, e_{\alpha}u_{2},\cdots, e_{\alpha}u_{k}),~\alpha=1,\cdots,7.
\end{equation*}

In fact, $1=(1, 0)$, $e_1=(\mathrm{i}, 0)$, $e_2=(\mathrm{j}, 0)$, $e_3=(\mathrm{k}, 0)$, $e_4=(0, 1)$,
$e_5=(0, \mathrm{i})$, $e_6=(0, \mathrm{j})$, $e_7=(0, \mathrm{k})\in\mathbb{H}\oplus\mathbb{H}$.
Recalling the Cayley-Dickson construction of the product in octonions $\mathbb{O}\cong \mathbb{H}\oplus\mathbb{H}$:
\begin{eqnarray*}
\mathbb{O}\times \mathbb{O}&\longrightarrow&\mathbb{O}\\
(a,b),~(c,d)&\mapsto&(a,b)\cdot (c,d) := (ac-\bar{d}b, ~ da+b\bar{c}),
\end{eqnarray*}
one can see easily that (cf. \cite{GTY20})
$$ e_1(e_2(\cdots(e_7z)\cdots))=-z, \;\textrm{for any} \;z\in\mathbb{O}.$$
Moreover, $(a,0)(c,d)=(ac,da)$ and $(0,b)(c,d)=(-\bar{d}b, b\bar{c})$ for any $a,b,c,d\in\mathbb{H}$.
Taking $a=i, j$ or $k$, $b=1, i, j,$ or $k$, for the standard orthonormal basis 
above, one can see easily that
$$ e_{\alpha}(e_{\beta}z)=-e_{\beta}(e_{\alpha}z) ~~(\alpha\neq \beta), ~~\text{for any}~ z\in \mathbb{O}.$$
Thus $\{P_0,\ldots, P_8\}$ forms a symmetric Clifford system on $\R^{16k}$. Furthermore, a direct computation
leads to $P_0\cdots P_8=-I_{16k}$, which indicates that the system is definite.

For $z=(z_1,\cdots, z_k), w=(w_1,\cdots, w_k)\in \mathbb{O}^{k}$, define
\begin{equation}\label{H}
\langle z, w\rangle_{\mathbb{O}}:=\sum_{i=1}^{k}z_i\overline{w}_i.
\end{equation}
Clearly, the Euclidean inner product $ \langle z, w\rangle=\mathrm{Re}    \langle z, w\rangle_{\mathbb{O}}.$

Now, we are ready to give a proof to the following
\begin{prop}\label{1.2}
Let $M_+\subset \mathbb{O}^k\oplus\mathbb{O}^k=\mathbb{R}^{16k}$ be the focal submanifold of OT-FKM type with $(g, m_1, m_2)=(4, 8, 8k-9)$ in the definite case. Then $(z, w)\in M_+$ if and only if
$$|z|=|w|=\frac{1}{\sqrt{2}}, \langle z, w\rangle_{\mathbb{O}}=0.$$
\end{prop}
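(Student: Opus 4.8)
The plan is to start from the explicit Clifford system $\{P_0,\ldots,P_8\}$ on $\mathbb{R}^{16k}=\mathbb{O}^k\oplus\mathbb{O}^k$ given in \eqref{clifford system} and simply unwind the defining equations of $M_+$. Recall that for a symmetric Clifford system the focal submanifold $M_+$ is
$$M_+=\Big\{x\in S^{16k-1}~\Big|~\langle P_i x, x\rangle=0,\ i=0,1,\ldots,8\Big\},$$
so with $x=(z,w)\in\mathbb{O}^k\oplus\mathbb{O}^k$ and $|x|^2=1$ (which after the usual normalisation convention of the paper will read $|z|^2+|w|^2=1$), we must translate each of the nine conditions $\langle P_i(z,w),(z,w)\rangle=0$ into statements about $z,w$. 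The first step is therefore the routine substitution: $P_0(z,w)=(z,-w)$ gives $\langle P_0(z,w),(z,w)\rangle=|z|^2-|w|^2$, so this condition says $|z|^2=|w|^2$, hence $|z|^2=|w|^2=\tfrac12$; and $P_1(z,w)=(w,z)$ gives $\langle P_1(z,w),(z,w)\rangle=2\langle w,z\rangle=2\,\mathrm{Re}\,\langle z,w\rangle_{\mathbb{O}}$, forcing $\mathrm{Re}\,\langle z,w\rangle_{\mathbb{O}}=0$.

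The second step handles the remaining seven conditions coming from $P_{1+\alpha}(z,w)=(E_\alpha w,-E_\alpha z)$, $\alpha=1,\ldots,7$. Here $\langle P_{1+\alpha}(z,w),(z,w)\rangle=\langle E_\alpha w,z\rangle-\langle E_\alpha z,w\rangle$. Using $E_\alpha u=(e_\alpha u_1,\ldots,e_\alpha u_k)$ and the basic octonion identities $\langle e_\alpha a,b\rangle=\mathrm{Re}\big((e_\alpha a)\bar b\big)$, together with $\overline{e_\alpha a}=-\bar a\,\bar e_\alpha$ and the alternativity/real-part identities $\mathrm{Re}(ab)=\mathrm{Re}(ba)$, one checks that each such condition is equivalent to $\mathrm{Re}\big(e_\alpha\langle z,w\rangle_{\mathbb{O}}\big)=0$ (up to sign and an overall factor $2$), i.e. the $e_\alpha$-component of the octonion $\langle z,w\rangle_{\mathbb{O}}$ vanishes. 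The key small computation is to verify that $\langle E_\alpha w,z\rangle-\langle E_\alpha z,w\rangle$ collapses to a single term of the form $c\,\mathrm{Re}(e_\alpha\langle z,w\rangle_{\mathbb{O}})$; this uses only that left multiplication by $e_\alpha$ is skew-symmetric on $\mathbb{O}$ and the cyclic-trace property of $\mathrm{Re}$.

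Combining the two steps: the conditions from $P_0,P_1$ give $|z|^2=|w|^2=\tfrac12$ and $\mathrm{Re}\,\langle z,w\rangle_{\mathbb{O}}=0$, while the conditions from $P_2,\ldots,P_8$ kill the seven imaginary components $e_1,\ldots,e_7$ of $\langle z,w\rangle_{\mathbb{O}}$. Since $\{1,e_1,\ldots,e_7\}$ is an orthonormal basis of $\mathbb{O}$, all eight components of $\langle z,w\rangle_{\mathbb{O}}$ vanish, which is exactly $\langle z,w\rangle_{\mathbb{O}}=0$. Conversely, if $|z|^2=|w|^2=\tfrac12$ and $\langle z,w\rangle_{\mathbb{O}}=0$ then every $\langle P_i(z,w),(z,w)\rangle$ vanishes and $|z|^2+|w|^2=1$, so $(z,w)\in M_+$. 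The only point demanding genuine care — the "main obstacle", such as it is — is the sign and coefficient bookkeeping in the $P_{1+\alpha}$ computation, since the non-commutativity and non-associativity of $\mathbb{O}$ mean one cannot be cavalier about the order of factors; but because $\mathbb{O}$ is alternative and the relevant expressions only involve $\mathrm{Re}$, all the needed identities ($\mathrm{Re}(ab)=\mathrm{Re}(ba)$, $\mathrm{Re}(a\bar b)=\langle a,b\rangle$, and $e_\alpha$ skew) are available and the calculation goes through cleanly.
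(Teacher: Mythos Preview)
Your proposal is correct and follows essentially the same approach as the paper: both use the explicit Clifford system \eqref{clifford system}, read off $|z|^2=|w|^2=\tfrac12$ and $\mathrm{Re}\langle z,w\rangle_{\mathbb{O}}=0$ from $P_0,P_1$, and then use the real-part identity $\mathrm{Re}(\lambda(\mu\nu))=\mathrm{Re}((\lambda\mu)\nu)$ (which is exactly the tool the paper invokes) to show that the $P_{1+\alpha}$ conditions kill the $e_\alpha$-components of $\langle z,w\rangle_{\mathbb{O}}$. The paper writes the outcome as $\langle P_{1+\alpha}(z,w),(z,w)\rangle=2\langle\langle z,w\rangle_{\mathbb{O}},e_\alpha\rangle$, which is your $c\,\mathrm{Re}(e_\alpha\langle z,w\rangle_{\mathbb{O}})$ up to sign.
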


\begin{proof}
Let $M_+$ be the focal submanifold of OT-FKM type with respect to the symmetric Clifford system $\{P_0,\ldots, P_8\}$ given in (\ref{clifford system}).
Using the well-known identity $\mathrm{Re}(\lambda(\mu \nu))=\mathrm{Re}((\lambda \mu)\nu)$ for any $\lambda,\mu,\nu\in \mathbb{O}$, we obtain that
\begin{eqnarray*}
\langle z, E_{\alpha}w\rangle&=&\mathrm{Re}\langle z, E_{\alpha}w\rangle_{\mathbb{O}}=\mathrm{Re}(\sum_{i=1}^{k}z_i(\overline{e_{\alpha}w_i}))=\mathrm{Re}(\sum_{i=1}^{k}(z_i\overline{w}_i)\overline{e}_{\alpha})\\
&=&\mathrm{Re}\langle \langle z, w\rangle_{\mathbb{O}}, e_{\alpha}\rangle_{\mathbb{O}}\\
&=&\langle \langle z, w\rangle_{\mathbb{O}}, e_{\alpha}\rangle\quad \text{for} ~~\alpha=1,\cdots,7,
\end{eqnarray*}
and further
$$\langle P_{1+\alpha}(z, w), (z, w)\rangle=\langle (E_{\alpha}w, -E_{\alpha}z), (z,w)\rangle=2\langle \langle z, w\rangle_{\mathbb{O}}, e_{\alpha}\rangle, \quad \text{for} ~~\alpha=1,\cdots,7.$$
Therefore, by definition,  the focal submanifold $M_+$ associated
with the given Clifford system is determined to be
$$M_+=\{(z, w)\in \mathbb{O}^{k}\oplus\mathbb{O}^{k}~|~|z|=|w|=\frac{1}{\sqrt{2}}, \langle z, w\rangle_{\mathbb{O}}=0\} $$
\end{proof}

\begin{rem}\label{1.4}\rm{
By Proposition \ref{1.2}, the focal submanifold $M_+$ of OT-FKM type with respect to the definite Clifford system $\{P_0,\cdots, P_8\}$
on $\mathbb{R}^{16k}$ is exactly diffeomorphic to the manifold $V_{2}(\mathbb{O}^k)$ defined by \cite{Ja58}.}
\end{rem}
In \cite{Ja58}, James defined $V_l(\mathbb{O}^k)$, the space of orthonormal $l$-frames in $\mathbb{O}^{k}$. More precisely,
$$V_l(\mathbb{O}^k)=\{(a_1,\cdots ,a_l)\in \mathbb{O}^k\oplus \cdots\oplus
\mathbb{O}^k~|~\langle a_i, a_j\rangle_{\mathbb{O}}=\delta_{ij}, for \;1\leq i,j\leq l    \}.$$
For example, $V_1(\mathbb{O}^k)$ is an $(8k-1)$-sphere. He defined a projection of $V_{l_1}(\mathbb{O}^k)$ into $V_{l_2}(\mathbb{O}^k)$, where $1\leq l_2\leq l_1$ by suppressing the first $l_1-l_2$ vectors of each $k$-frame, and asked if this is a fiber map, and if $V_l(\mathbb{O}^k)$ is a manifold. He showed that it is true for the projection $p: V_{2}(\mathbb{O}^k)\rightarrow V_1(\mathbb{O}^k)$, and thus $V_{2}(\mathbb{O}^k)$ admits a manifold structure. As explained in Remark \ref{1.4}, $V_{2}(\mathbb{O}^k)$ is indeed a manifold diffeomorphic to $M_+$ of OT-FKM type with respect to the definite Clifford system $\{P_0,\cdots, P_8\}$. For recent progress on the two questions of James, we refer to \cite{QTY22}.

In the following, we will construct the characteristic map of $V_{2}(\mathbb{O}^k)$ as an $S^{8k-9}$-bundle over $S^{8k-1}$.
As before, $S^{8k-1}$ is the unit sphere in $\mathbb{O}^{k}$ and $S^{8k-2}$ is its equator defined by $\mathrm{Re}(z_k)=0$. Let $N=(0,\cdots,0, 1)\in \mathbb{O}^{k}$. For
$$V_{2}(\mathbb{O}^k)=\{(z, w)\in \mathbb{O}^k\oplus\mathbb{O}^k~|~\langle z, z\rangle_{\mathbb{O}}=\langle w, w\rangle_{\mathbb{O}}=1, \langle z, w\rangle_{\mathbb{O}}=0\},$$
we first construct an embedding
$$\psi_1: (S^{8k-1}\setminus \{-N\})\times S^{8k-9}\rightarrow V_{2}(\mathbb{O}^k), \quad \psi_1(z, X)=(z, Y),$$
where $$X\in \mathbb{O}^{k-1}\cong\mathbb{O}^{k-1}\oplus 0\subset \mathbb{O}^{k}, W=(z_1,\cdots, z_{k-1}),$$
$$b=(1+z_k)(1+\overline{z_k})^{-1}, \alpha=(1+\overline{z_k})^{-1},$$
and
$$Y=\Big(X-(\langle X, W\rangle_{\mathbb{O}}\alpha)W, ~~-\langle X, W\rangle_{\mathbb{O}} b\Big).$$
To verify the map $\psi_1$ is well-defined, we need to show that $\langle Y, z\rangle_{\mathbb{O}}=0$ and $|Y|^2=1$. Firstly, a direct calculation leads to
\begin{eqnarray*}
\langle Y, z\rangle_{\mathbb{O}}&=&\langle X, W\rangle_{\mathbb{O}}-\langle \left(\langle X, W\rangle_{\mathbb{O}}\alpha\right)W, W\rangle_{\mathbb{O}}-\left(\langle X, W\rangle_{\mathbb{O}}b\right)\bar{z}_k\\
&=& \langle X, W\rangle_{\mathbb{O}}-\sum_{i=1}^{k-1}\Big(\left(\langle X, W\rangle_{\mathbb{O}}\alpha\right)z_i\Big)\bar{z_i}-\left(\langle X, W\rangle_{\mathbb{O}}(\frac{1+z_k}{1+\bar{z}_k})\right)\bar{z}_k\\
&=& \langle X, W\rangle_{\mathbb{O}}-\left(\langle X, W\rangle_{\mathbb{O}}\alpha\right)|W|^2-\langle X, W\rangle_{\mathbb{O}}\Big((\frac{1+z_k}{1+\bar{z}_k})\bar{z}_k\Big).
\end{eqnarray*}
As for the last equality, we have used the well-known formula that $(ab)\bar{b}=a|b|^2$ for any $a,b\in \mathbb{O}$, as well as  the Artin theorem which states that any subalgebra of $\mathbb{O}$ generated by two linearly independent elements is associative.
It is clear that $1-\alpha|W|^2-(\frac{1+z_k}{1+\bar{z}_k})\bar{z}_k=0$ , thus $\langle Y, z\rangle_{\mathbb{O}}=0$.
Next,
\begin{eqnarray*}
|Y|^2&=&|X-(\langle X, W\rangle_{\mathbb{O}}\alpha)W|^2+|\langle X, W\rangle_{\mathbb{O}} b|^2\\
&=&|X|^2+(|\alpha|^2|W|^2+|b|^2)\cdot|\langle X, W\rangle_{\mathbb{O}}|^2-2\langle X, (\langle X, W\rangle_{\mathbb{O}}\alpha)W\rangle\\
&=& 1+(|\alpha|^2|W|^2+|b|^2)\cdot|\langle X, W\rangle_{\mathbb{O}}|^2-2\mathrm{Re}\langle X, (\langle X, W\rangle_{\mathbb{O}}\alpha)W\rangle_{\mathbb{O}}\\
&=& 1+(|\alpha|^2|W|^2+|b|^2)\cdot|\langle X, W\rangle_{\mathbb{O}}|^2-2\mathrm{Re}\sum_{i=1}^{k-1}x_i(\bar{z_i}\overline{\langle X, W\rangle_{\mathbb{O}}\alpha})\\
&=& 1+(|\alpha|^2|W|^2+|b|^2)\cdot|\langle X, W\rangle_{\mathbb{O}}|^2-2\mathrm{Re}\sum_{i=1}^{k-1}(x_i\bar{z_i})\overline{\langle X, W\rangle_{\mathbb{O}}\alpha}\\
&=& 1+(|\alpha|^2|W|^2+|b|^2)\cdot|\langle X, W\rangle_{\mathbb{O}}|^2-2\mathrm{Re}\langle X, W\rangle_{\mathbb{O}}(\bar{\alpha}\overline{\langle X, W\rangle_{\mathbb{O}}})\\
&=&1+(|\alpha|^2|W|^2+|b|^2-2\mathrm{Re}\alpha)\cdot|\langle X, W\rangle_{\mathbb{O}}|^2\\
&=&1,
\end{eqnarray*}
where we have used a well-known formula that $\langle ab, c \rangle=\langle a, c\bar{b} \rangle$ for any $a, b, c\in \mathbb{O}$ and a trivial equality  $|\alpha|^2|W|^2+|b|^2-2\mathrm{Re}\alpha=0$.
Consequently, $\psi_1$ is well-defined.

Moreover, we can construct another well-defined embedding
$$\psi_2: (S^{8k-1}\setminus \{N\})\times S^{8k-9}\rightarrow N_+, \quad \psi_2(z, X)=(z, Y),$$
where $$X\in \mathbb{O}^{k-1}\cong\mathbb{O}^{k-1}\oplus 0\subset \mathbb{O}^{k}, ~~W=(z_1,\cdots, z_{k-1}),$$
$$b'=(1-z_k)(1-\overline{z_k})^{-1}, ~~\alpha'=(1-\overline{z_k})^{-1},$$
and
$$Y=\Big(X-(\langle X, W\rangle_{\mathbb{O}}\alpha')W, ~~\langle X, W\rangle_{\mathbb{O}} b'\Big).$$

Furthermore, we get the characteristic map
$$\chi: S^{8k-2}\rightarrow \mathrm{SO}(8k-8),\quad  z\mapsto \chi(z),$$
where $\chi(z)$ is defined by
$$\chi(z)(X):=X-2(\langle X, W\rangle_{\mathbb{O}}\cdot(1+z_k)^{-2})W.$$
Under the standard identification $\mathbb{O}^{k-1}\cong \mathbb{R}^{8k-8}$, for each $z\in S^{8k-2}$, it follows that $\chi(z)$ is
a real linear transformation and preserves the real inner product. In particular, $\chi(0, z_k)=I_{8k-8}$, $\forall z=(0, z_k)\in S^{8k-2}$.
Hence, for each $z\in S^{8k-2}$, $\chi(z)\in \mathrm{SO}(8k-8)$.
Therefore, $\chi$ is well-defined.

\begin{rem}\label{1.8}\rm{
When $k=2$, this is just the definite case with $(m_1, m_2)=(8, 7)$. By \cite{FKM81}, the focal submanifold $M_+$ is homogeneously embedded in $S^{31}$. The characteristic map is given by
\begin{eqnarray*}
\chi: S^{14}&\longrightarrow& \mathrm{SO}(8)\\
\chi(z_1, z_2)(X)&=&X-2((X\bar{z}_1)(1+z_2)^{-2})z_1, ~~\forall ~~X\in \mathbb{O},
\end{eqnarray*}
where $\mathrm{Re}(z_2)=0$.  Set $X=(1,0,\cdots,0)\in S^7$. Then
$$\chi(z)X=1-2(\overline{z}_1(1+z_2)^{-2})z_1.$$
Composing with the natural projection $\pi: SO(8)\rightarrow S^7$ induced by $X=(1,0,\cdots,0)\in S^7$,
it gives a map
\begin{eqnarray*}\label{g}
g_2:=\pi\circ\chi: ~~S^{14}&\longrightarrow & S^7\\
z=(z_1, z_2)&\mapsto& 1-2(\overline{z}_1(1+z_2)^{-2})z_1.\nonumber
\end{eqnarray*}
For $z=(z_1,z_2)\in S^{15}\subset \mathbb{O}^2$ with $\mathrm{Re}(z_2)=0$, define
\begin{eqnarray*}
\tau:~S^{14}&\longrightarrow& S^{7}\\
(z_1,z_2)&\mapsto& 2|z_2|^2-1+2\frac{\overline{z_1}z_2z_1}{|z_1|}.
\end{eqnarray*}
Similarly as before, using proof by contradiction, it is easy to see that\\ $(g_2+\tau)(z)\neq 0$ for any $z=(z_1,z_2)\in S^{15}\subset \mathbb{O}^2$ with $\mathrm{Re}(z_2)=0$. Thus $g_2\simeq \tau$.

According to \cite{DMR04}, the following map $b$ generates $\pi_{14}S^7\cong \mathbb{Z}_{120}$,
\begin{eqnarray*}
b:~S^{14}&\longrightarrow& S^{7}\\
(z_1, z_2)&\mapsto& b(z_1,z_2)=\left\{\begin{array}{ll}
\frac{z_1}{|z_1|} e^{\pi z_2} \frac{\overline{z_1}}{|z_1|},~~~ z_1\neq 0\\
~~-1,\qquad\quad  z_1=0,
\end{array}\right.
\end{eqnarray*}
where $\mathrm{Re}(z_2)=0$.
Define $b'(z_1, z_2)=-b(\overline{z_1}, -z_2)$.
If $z_1=0$, it is direct to see that $\tau+b'\neq 0$. We need only to consider the case with $z_1\neq 0$.
Spelling out $e^{\pi z_2}=\cos(\pi|z_2|)+\sin(\pi|z_2|)\frac{z_2}{|z_2|}$, we can express $b'$ as
$$b'(z_1, z_2)=-\cos(\pi|z_2|)+\sin(\pi|z_2|)\frac{\overline{z_1}z_2z_1}{|z_2||z_1|^2}.$$
Thus $(\tau+b')(z)=2|z_2|^2-1-\cos(\pi|z_2|)+(\frac{2}{|z_1|}+\frac{\sin(\pi|z_2|)}{|z_2||z_1|^2})\overline{z_1}z_2z_1$. Suppose $(\tau+b')(z)=0$. Then the imaginary part implies that $z_2=0$, a contradiction. Thus $(\tau+b')(z)\neq 0$ for any $z=(z_1,z_2)\in S^{15}\subset \mathbb{O}^2$ with $\mathrm{Re}(z_2)=0$,
and further, $\tau\simeq b'\simeq b$. Therefore,
$$[g_2]=[\tau]=[b']=[b]\in \pi_{14}S^7$$
is a generator.

When $k\geq 3$, combing with the natural projection
$\pi:~SO(8k-8)\rightarrow S^{8k-9}$ induced by $X=(1, 0,... ,0)\in S^{8k-9}$,
we have a map
$$g_k:=\pi\circ\chi: ~~S^{8k-2}\longrightarrow  S^{8k-9},$$
which is defined by
$$g_k(z_1,\cdots,z_k)=(1-2\overline{z_1}(1+z_k)^{-2}z_1, -2(\overline{z_1}(1+z_k)^{-2})z_2,\cdots,-2(\overline{z_1}(1+z_k)^{-2})z_{k-1}),$$
where $\mathrm{Re}(z_k)=0$.

To investigate the homotopy class $[g_k]\in\pi_{8k-2}S^{8k-9}\cong \pi_7^S\cong \mathbb{Z}_{240}$, we recall some basic knowledge on J-homomorphisms and homotopy groups of spheres.

Define
$$\rho: S^7\rightarrow \mathrm{SO}(7), \quad x\mapsto \rho(x),$$
where $\rho(x)(y)=xy\overline{x}$, $x, y \in S^7\subset \mathbb{O}$ and
$\mathrm{Re}( y)=0$. Moreover, define
$$\sigma: S^7\rightarrow \mathrm{SO}(8), \quad x\mapsto \sigma(x),$$
where $\sigma(x)(y)=xy$, $x \in S^7\subset\mathbb{O}$, and $y\in \mathbb{O}$.

It follows from \cite{TSY57} that $\pi_7(SO(7))\cong \mathbb{Z}$ is generated by $[\rho]$.
By the triviality of the bundle $SO(7)\hookrightarrow SO(8)\rightarrow S^7$, one has a splitting isomorphism
$$\pi_7(SO(8))\cong\pi_7S^7\oplus\pi_7(SO(7))\cong\mathbb{Z}\oplus\mathbb{Z},$$
where the free parts are generated by $[\sigma]$ and $[i_*(\rho)]$, respectively. Here, $i: SO(7)\rightarrow SO(8)$ is the canonical inclusion.
As it is well known, $\pi_7(SO(9))\cong \mathbb{Z}$. Let us consider the canonical inclusion $j: SO(8)\hookrightarrow SO(9)$. It induces a
homomorphism
$$j_*:~\pi_{7}(SO(8))\longrightarrow \pi_{7}(SO(9))\cong\mathbb{Z}.$$
It is shown in \cite{TSY57} that $[j_*\sigma]$ is a generator of $\pi_7(SO(9))$, and $[j_*(i_*\rho)]=2[j_*\sigma]$ in $\pi_7(SO(9))$. Thus,
in $\pi_{16}S^9\cong \mathbb{Z}_{240}$, one has an equality
$$[\Sigma^2H(\rho)]=2[\Sigma H(\sigma)],$$
where $\Sigma$ is the suspension, and $H$ is the Hopf construction, or $J$-homomorphism.
Furthermore, $[\Sigma H(\sigma)]\in \pi_{16}S^9$ is a generator, and thus
$[\Sigma^{p+1}H(\sigma)]\in\pi_{16+p}S^{9+p}\cong\pi_7^S\cong\mathbb{Z}_{240}$ is a generator, $[\Sigma^{p+2}H(\rho)]=2[\Sigma^{p+1}H(\sigma)]$.

Define  $\Phi:~S^{8k-2}\longrightarrow S^{8k-9}$ by
$$\Phi(z)=\Big(1-2|z_1|^2+2\frac{\overline{z_1}z_kz_1}{|z_1|}, -2\overline{z_1}z_2,\cdots, -2\overline{z_1}z_{k-1}\Big),$$
where $\mathrm{Re}(z_k)=0$.
Again, using proof by contradiction, one gets
$(\Phi+g_k)(z)\neq 0$ for any $z\in S^{8k-1}$ with $\mathrm{Re}(z_k)=0$. Thus
$g_k\simeq \Phi$.
In particular, $[H(\rho)]=-[g_2]\in \pi_{14}S^7$.

Choose $\Omega: S^7\rightarrow \mathrm{SO}(8k-9)$ by
$$ \Omega(x_1)(x_2,\cdots, x_k)=(\overline{x_1}x_kx_1, -\overline{x_1}x_2,\cdots, -\overline{x_1}x_{k-1}),$$
where $x_1\in S^7\subset\mathbb{O}$, $(x_2,\cdots,x_{k-1})\in \mathbb{O}^{k-2}$, $x_k\in \mathrm{Im} \mathbb{O}$.
Clearly, $\Phi=H(\Omega)$.

For $i=0$, define $\Omega_0: S^7\rightarrow \mathrm{SO}(8k-9)$ by
$$\Omega_0(x_1)(x_2,\cdots, x_k)=(\overline{x_1}x_kx_1, x_2,\cdots, x_{k-1}),$$
and for $1\leq i\leq k-2$, define $\Omega_i: S^7\rightarrow \mathrm{SO}(8k-9)$ by
$$\Omega_i(x_1)(x_2,\cdots, x_k)=(x_2,\cdots, x_i, -\overline{x_1}x_{i+1}, x_{i+2},\cdots, x_{k}).$$
It follows that as homotopy classes in $\pi_{8k-2}S^{8k-9}$
$$[H(\Omega]=[H(\Omega_0)]+[H(\Omega_1)]+\cdots+[H(\Omega_{k-2})],$$
$$[H(\Omega_i)]=-[\Sigma^{8k-17} H(\sigma)],~~ i= 1, \cdots, k-2,$$
and
$$[H(\Omega_0)]=-[\Sigma^{8k-16} H(\rho)].$$
Therefore, for $k\geq 3$,
\begin{eqnarray*}
[g_k]&=&[\Phi]=[H(\Omega)]=-(k-2)[\Sigma^{8k-17}H(\sigma)]-[\Sigma^{8k-16} H(\rho)]=-k[\Sigma^{8k-17}H(\sigma)]\\
&\equiv&-k~(\mathrm{mod}~240).
\end{eqnarray*}
Consequently, the sphere bundle $S^{8k-9}\hookrightarrow M_+^{16k-10}\rightarrow V_1(\mathbb{O}^k)=S^{8k-1}$ admits a cross-section if and only if $k$ can be divided by $240$, which implies the definite case of Proposition \ref{octonion}. As we mentioned in Remark \ref{1.4}, the focal submanifold $M_+$ of OT-FKM type in the definite case is diffeomorphic to
$V_2(\mathbb{O}^k)$.

For the sake of comparison, we would like to mention that the symplectic Stiefel fibration $V_4(\mathbb{H}^{2k})\rightarrow V_1(\mathbb{H}^{2k})=S^{8k-1}$ admits a cross-section if and only if $2k\equiv 0 (\text{mod}~c_4)$, where $c_4=2^7\cdot 3^4\cdot 5\cdot 7$ is the quaternion James number (cf. \cite{Ja58}, \cite{SS73}). Notice that when $V_{2}(\mathbb{O}^k)$ has a cross-section, $V_4(\mathbb{H}^{2k})$ might not have.

Moreover, there is a natural embedding $V_p(\mathbb{H}^{m})\subset V_{2p}(\mathbb{C}^{2m})$, and the complex Stiefel fibration $V_8(\mathbb{C}^{4k})\rightarrow V_1(\mathbb{C}^{4k})=S^{8k-1}$ admits a cross-section if and only if $4k\equiv 0 (\text{mod}~M_8)$, where $M_8=b_8$ is the complex James number and $b_8=c_4$ (cf. \cite{AW65}, \cite{SS73}).
It follows that if $V_4(\mathbb{H}^{2k})$ admits a cross-section, then so does $V_8(\mathbb{C}^{4k})$.}
\end{rem}

\subsubsection{}\hspace{-2.5mm}\textbf{The indefinite case.}\label{indef 8}
For $z=(z_1,\cdots, z_k), w=(w_1,\cdots, w_k)\in \mathbb{O}^{k}$ and $0\leq p\leq k-1$, define
\begin{equation}\label{o1}
\langle z, w\rangle_p:=\sum_{i=1}^{p}z_i\overline{w}_i+\sum_{j=p+1}^{k-1}w_j\overline{z}_j+z_{k}\overline{w}_k.
\end{equation}

\begin{prop}
\begin{itemize}
\item[(1).] For $z=(z_1,\cdots, z_k), w=(w_1,\cdots, w_k)\in \mathbb{O}^{k}$, $\langle z, w\rangle_p=0$
if and only if the following two conditions hold:
$\mathrm{Re}\langle z, w\rangle_{\mathbb{O}}=0$ and $\sum_{i=1}^{p}z_i\overline{w}_i-\sum_{j=p+1}^{k-1}z_j\overline{w}_j+z_{k}\overline{w}_k\in \mathbb{R}$.\vspace{2mm}

\item[(2).] $N'_+=\{(z, w)\in \mathbb{O}^{k}\oplus\mathbb{O}^{k}~|~|z|=|w|=1, \langle z, w\rangle_p=0\}$
is diffeomorphic to the focal submanifold $M_+$ of OT-FKM isoparametric hypersurface with $(m_1, m_2)=(8, 8k-9)$ and $\mathrm{Tr}(P_0P_1\cdots P_8)=-16(2p-k+2)$.
\end{itemize}
\end{prop}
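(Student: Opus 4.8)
The plan is to mirror, word for word, the proof of the corresponding proposition for $m=4$, replacing $\mathbb{H}$ by $\mathbb{O}$ and $\{\mathrm{i},\mathrm{j},\mathrm{k}\}$ by the imaginary units $\{e_1,\dots,e_7\}$, with the non-associativity of $\mathbb{O}$ absorbed by Artin's theorem and the identity $\mathrm{Re}((\lambda\mu)\nu)=\mathrm{Re}(\lambda(\mu\nu))$ used throughout Section~\ref{def 8}. For part (1) I would split $\langle z,w\rangle_{\mathbb{O}}=A+B+C$ with $A=\sum_{i=1}^{p}z_i\overline{w}_i$, $B=\sum_{j=p+1}^{k-1}z_j\overline{w}_j$, $C=z_k\overline{w}_k$, and observe that $w_j\overline{z}_j=\overline{z_j\overline{w}_j}$, so that $\langle z,w\rangle_p=A+\overline{B}+C$. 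Then $\langle z,w\rangle_p=0$ is equivalent to $A+C=-\overline{B}$, which, on separating real and imaginary parts, is equivalent to $\mathrm{Re}(A+B+C)=\mathrm{Re}\langle z,w\rangle_{\mathbb{O}}=0$ together with $A-B+C=\sum_{i=1}^{p}z_i\overline{w}_i-\sum_{j=p+1}^{k-1}z_j\overline{w}_j+z_k\overline{w}_k\in\mathbb{R}$. This is purely formal and uses only conjugation in $\mathbb{O}$.

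For part (2) I would first define $E_\alpha\colon\mathbb{O}^k\to\mathbb{O}^k$, $\alpha=1,\dots,7$, by $E_\alpha(z)=(e_\alpha z_1,\dots,e_\alpha z_p,-e_\alpha z_{p+1},\dots,-e_\alpha z_{k-1},e_\alpha z_k)$, and check, exactly as in Section~\ref{def 8}, that each $E_\alpha$ is orthogonal (since $|e_\alpha a|=|a|$), skew-symmetric (since $\langle e_\alpha a,b\rangle=\mathrm{Re}((e_\alpha a)\overline{b})=\mathrm{Re}(e_\alpha(a\overline{b}))=-\langle a,e_\alpha b\rangle$, using $\overline{e_\alpha}=-e_\alpha$), satisfies $E_\alpha^2=-I$ (by Artin, $e_\alpha(e_\alpha a)=(e_\alpha e_\alpha)a=-a$), and satisfies $E_\alpha E_\beta+E_\beta E_\alpha=0$ for $\alpha\neq\beta$ (because on each block the two sign factors coincide and $e_\alpha(e_\beta a)=-e_\beta(e_\alpha a)$, as recorded in Section~\ref{def 8}). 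Then, setting $P_0(z,w)=(z,-w)$, $P_1(z,w)=(w,z)$ and $P_{1+\alpha}(z,w)=(E_\alpha w,-E_\alpha z)$ for $1\leq\alpha\leq7$, I would verify by the same $2\times2$-block bookkeeping as in the $m=4$ argument that $\{P_0,\dots,P_8\}$ is a symmetric Clifford system on $\mathbb{O}^k\oplus\mathbb{O}^k=\mathbb{R}^{16k}$.

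Next I would compute the trace. Multiplying blocks gives $P_2P_3\cdots P_8=\left(\begin{smallmatrix}0&-E_1\cdots E_7\\E_1\cdots E_7&0\end{smallmatrix}\right)$ and $P_0P_1=\left(\begin{smallmatrix}0&I\\-I&0\end{smallmatrix}\right)$, whence $P_0P_1\cdots P_8=\mathrm{diag}(E_1\cdots E_7,\,E_1\cdots E_7)$ and $\mathrm{Tr}(P_0\cdots P_8)=2\,\mathrm{Tr}(E_1\cdots E_7)$. On the $i$-th $\mathbb{O}$-block $E_1\cdots E_7$ acts as $z\mapsto\varepsilon_i\,e_1(e_2(\cdots(e_7 z)\cdots))=-\varepsilon_i z$, where $\varepsilon_i=1$ for $1\leq i\leq p$ and $i=k$, while $\varepsilon_i=(-1)^7=-1$ for $p+1\leq i\leq k-1$; here one uses $e_1(e_2(\cdots(e_7 z)\cdots))=-z$ from Section~\ref{def 8}. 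Hence $\mathrm{Tr}(E_1\cdots E_7)=-8p+8(k-1-p)-8=8(k-2-2p)$ and $\mathrm{Tr}(P_0\cdots P_8)=16(k-2-2p)=-16(2p-k+2)$. To identify the focal submanifold I would compute, as in Proposition~\ref{1.2}, that $\langle P_0(z,w),(z,w)\rangle=|z|^2-|w|^2$, that $\langle P_1(z,w),(z,w)\rangle=2\,\mathrm{Re}\langle z,w\rangle_{\mathbb{O}}$, and that $\langle P_{1+\alpha}(z,w),(z,w)\rangle=2\langle E_\alpha w,z\rangle=2\bigl\langle e_\alpha,\ \sum_{i=1}^{p}z_i\overline{w}_i-\sum_{j=p+1}^{k-1}z_j\overline{w}_j+z_k\overline{w}_k\bigr\rangle$; thus on the unit sphere of $\mathbb{R}^{16k}$ the defining equations of $M_+$ read $|z|=|w|=\tfrac{1}{\sqrt2}$, $\mathrm{Re}\langle z,w\rangle_{\mathbb{O}}=0$ and $\sum_{i=1}^{p}z_i\overline{w}_i-\sum_{j=p+1}^{k-1}z_j\overline{w}_j+z_k\overline{w}_k\in\mathbb{R}$, which by part (1) is exactly $\langle z,w\rangle_p=0$. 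Rescaling by $(z,w)\mapsto\sqrt2\,(z,w)$ then gives the diffeomorphism $M_+\cong N_+'$.

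The main obstacle is not conceptual but bookkeeping under non-associativity: every algebraic step — the skew-symmetry and mutual anticommutation of the $E_\alpha$, the block computation of $P_0\cdots P_8$, and the evaluation of $\langle P_{1+\alpha}(z,w),(z,w)\rangle$ — must be carried out through Artin's theorem and the weak associativity $\mathrm{Re}((xy)z)=\mathrm{Re}(x(yz))$ rather than by naive cancellation, and the sign pattern $\varepsilon_i$ built into the $E_\alpha$ must be tracked precisely; it is the identity $e_1(e_2(\cdots(e_7 z)\cdots))=-z$ that forces the trace to equal the prescribed value $-16(2p-k+2)$.
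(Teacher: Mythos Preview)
Your proposal is correct and follows essentially the same approach as the paper. The paper defines the same $E_\alpha$ and $P_i$, states that part~(1) is clear, computes $P_0P_1\cdots P_8$ directly (arriving at the same diagonal sign pattern you obtain via the block-matrix intermediate), and then invokes part~(1) to identify $M_+$; your version simply supplies more of the bookkeeping details that the paper leaves to the reader.
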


\begin{proof}
It is clear that (1) is valid. For (2), define seven orthogonal transformations $E_1, \cdots, E_7$ on $\mathbb{O}^{k}$ as follows.
Given $z=(z_1,\cdots, z_k)\in\mathbb{O}^{k}$,
\begin{equation*}\label{indef E}
E_{\alpha}(z)=(e_{\alpha}z_1, \cdots, e_{\alpha}z_p, -e_{\alpha}z_{p+1},\cdots, -e_{\alpha}z_{k-1}, e_{\alpha}z_{k}), ~~\alpha=1,\cdots,7.
\end{equation*}
Similar to the arguments in the proof of Proposition \ref{1.2},
we see that the following linear transformations $P_0,\ldots, P_8$ on $\R^{16k}$
\begin{equation}\label{indef P8}
P_0(z, w)=(z, -w), ~~P_1(z, w)=(w, z),~~P_{1+\alpha}(z, w)=(E_{\alpha}w, -E_{\alpha}z),
\end{equation}
form a symmetric Clifford system on $\R^{16k}$.
Moreover, a direct computation leads to
\begin{eqnarray*}
P_0P_1\cdots P_8(z,w)&=&(-z_1,\cdots,-z_p, z_{p+1},\cdots, z_{k-1},-z_k,\\
&&-w_1,\cdots, -w_p,w_{p+1},\cdots,w_{k-1}, -w_k),
\end{eqnarray*}
which implies $\mathrm{Tr}(P_0P_1\cdots P_8)=-16(2p-k+2)$. By definition and (1),  the focal submanifold $M_+$ associated
with the given Clifford system is determined by
$M_+=\{(z, w)\in \mathbb{O}^{k}\oplus\mathbb{O}^{k}~|~|z|=|w|=\frac{1}{\sqrt{2}}, \langle z, w\rangle_p=0\}$.
\end{proof}

For each $\varepsilon\in\mathbb{O}$, $W=(w_1,\cdots, w_{k-1})\in \mathbb{O}^{k-1}$, and $0\leq p\leq k-1$, define
\begin{equation}\label{o2}
\varepsilon\ast_p W:=(\varepsilon w_1,\cdots, \varepsilon w_p, \overline{\varepsilon} w_{p+1},\cdots, \overline{\varepsilon} w_{k-1}).
\end{equation}
Moreover, for $X=(x_1,\cdots, x_{k-1}),W \in \mathbb{O}^{k-1}\cong$ $\mathbb{O}^{k-1}\oplus 0\subset \mathbb{O}^{k}$,  recall
$$\langle X, W\rangle_p=\sum_{i=1}^{p}x_i\overline{w}_i+\sum_{j=p+1}^{k-1}w_j\overline{x}_j.$$
Now, we are ready to give a proof to
\vspace{2mm}

\noindent \textbf{Theorem \ref{characteristic maps}.} (ii).
\emph{For $m=8$ and $\mathrm{Tr}(P_0P_1\cdots P_8)=-16(2p-k+2)$ with $0\leq p\leq k-1$, the characteristic map $\chi: S^{8k-2} \rightarrow \mathrm{SO}(8k-8)$ of $\eta$ is given by
$$\chi(z)(X)=X-2\Big(\langle X, W\rangle_p(1+z_k)^{-2}\Big)\ast_p W,$$
where $z=(z_1,..., z_k)\in S^{8k-2}$, the equator in $S^{8k-1}\subset \mathbb{O}^k$ with $ \mathrm{Re}(z_k)=0$, $W=(z_1,..., z_{k-1})$, and $X\in \mathbb{O}^{k-1}\cong \mathbb{R}^{8k-8}.$ Moreover, $\langle \cdot, \cdot\rangle_p$ and $\ast_p$-operation are defined by (\ref{o1}) and (\ref{o2}), respectively.}
\vspace{2mm}

\begin{proof}
As before, for $z=(z_1,\cdots, z_k), w=(w_1,\cdots, w_k)\in \mathbb{O}^{k}$, choose the octonionic inner product in (\ref{H}). Let $S^{8k-1}$ be the unit sphere in $\mathbb{O}^{k}$ and $S^{8k-2}$ be its equator defined by $\mathrm{Re}(z_k)=0$. Let $N=(0,\cdots,0, 1)\in S^{8k-1}$.

Now, we construct an embedding
$$\psi_1: (S^{8k-1}\setminus \{-N\})\times S^{8k-9}\rightarrow N_+, \quad \psi_1(z, X)=(z, Y),$$
where $$X\in \mathbb{O}^{k-1}\cong\mathbb{O}^{k-1}\oplus ~0\subset \mathbb{O}^{k}, W=(z_1,\cdots, z_{k-1}),$$
$$b=(1+z_k)(1+\overline{z_k})^{-1}, \alpha=(1+\overline{z_k})^{-1},$$
and
$$Y=\Big(X-(\langle X, W\rangle_p\alpha)\ast_p W, ~~-\langle X, W\rangle_p b\Big).$$
To prove that $\psi_1$ is well-defined, we need the following lemma.

\begin{lem}\label{2.2}
\begin{itemize}
\item[(1).] For any $\varepsilon\in\mathbb{O}$, $W=(z_1,\cdots, z_{k-1})\in \mathbb{O}^{k-1}$,
$$\langle \varepsilon\ast_p W, W\rangle_p=\varepsilon\langle W, W\rangle_{\mathbb{O}}.$$

\item[(2).] For any $X=(x_1,\cdots, x_{k-1}), W=(z_1,\cdots, z_{k-1})\in \mathbb{O}^{k-1}$ and $z_k\in\mathbb{O}$,
$$\langle X, \varepsilon\ast_p W\rangle=\mathrm{Re}(\alpha)|\langle X, W\rangle_p|^2,$$
where $\alpha=(1+\overline{z_k})^{-1}$ and $\varepsilon=\langle X, W\rangle_p\alpha$.
\end{itemize}
\end{lem}

\begin{proof}
For (1),
\begin{eqnarray*}
\langle \varepsilon\ast_p W, W\rangle_p
&=&(\varepsilon z_1)\overline{z}_1+\cdots (\varepsilon z_p)\overline{z}_p+z_{p+1}(\overline{\bar{\varepsilon} z_{p+1}})+\cdots z_{k-1}(\overline{\bar{\varepsilon} z_{k-1}})\\
&=&(\varepsilon z_1)\overline{z}_1+\cdots (\varepsilon z_p)\overline{z}_p+z_{p+1}(\overline{ z}_{p+1}\varepsilon)+\cdots z_{k-1}(\overline{z}_{k-1}\varepsilon )\\
&=&\varepsilon (z_1\overline{z}_1+\cdots  z_p\overline{z}_p+z_{p+1}\overline{ z}_{p+1}+\cdots z_{k-1}\overline{z}_{k-1})\\
&=&\varepsilon\langle W, W\rangle_{\mathbb{O }}.
\end{eqnarray*}
For (2),
\begin{eqnarray*}
\langle X, \varepsilon \ast_p W\rangle&=&\mathrm{Re}\langle (x_1,\cdots, x_{k-1}), (\varepsilon z_1,\cdots, \varepsilon z_p, \overline{\varepsilon} z_{p+1},\cdots, \overline{\varepsilon} z_{k-1})\rangle_{\mathbb{O}}\nonumber\\
&=&\mathrm{Re}(x_1(\overline{\varepsilon z_1})+\cdots+x_{p}(\overline{\varepsilon z_{p}})+(\overline{\varepsilon}z_{p+1})\overline{x}_{p+1}+\cdots+
(\overline{\varepsilon}z_{k-1})\overline{x}_{k-1})\nonumber\\
&=&\mathrm{Re}(x_1(\overline{ z}_1\overline{\varepsilon})+\cdots+x_{p}(\overline{ z}_{p}\overline{\varepsilon})+(\overline{\varepsilon}z_{p+1})\overline{x}_{p+1}+\cdots+
(\overline{\varepsilon}z_{k-1})\overline{x}_{k-1})\nonumber\\
&=&\mathrm{Re}((x_1\overline{ z}_1)\overline{\varepsilon})+\cdots+(x_{p}\overline{ z}_{p})\overline{\varepsilon}+\overline{\varepsilon}(z_{p+1}\overline{x}_{p+1})+\cdots+
\overline{\varepsilon}(z_{k-1}\overline{x}_{k-1}))\nonumber\\
&=&\mathrm{Re}(\overline{\varepsilon}(x_1\overline{ z}_1+\cdots+x_{p}\overline{ z}_{p}+z_{p+1}\overline{x}_{p+1}+\cdots+
z_{k-1}\overline{x}_{k-1}))\nonumber\\
&=&\mathrm{Re}(\overline{\varepsilon}\langle X, W\rangle_p)\nonumber\\
&=&\mathrm{Re}((\overline{\alpha}\overline{\langle X, W\rangle_p})\langle X, W\rangle_p)\nonumber\\
&=&\mathrm{Re}(\alpha)|\langle X, W\rangle_p|^2.
\end{eqnarray*}
During the proof, the following identities are used
$$\mathrm{Re}(ab)=\mathrm{Re}(ba), ~~\mathrm{Re}(a(bc))=\mathrm{Re}((ab)c)=\mathrm{Re}(c(ab)),~~~ \forall~ a, b, c \in \mathbb{O}.$$
\end{proof}

From (1) of Lemma \ref{2.2}, it follows that
\begin{eqnarray*}
\langle Y, z\rangle_p
&=& \langle ~(X-(\langle X, W\rangle_p\alpha)\ast_p W, -\langle X, W\rangle_p b), ~~~(W, z_k)~\rangle_p\\
&=& \langle X, W\rangle_p-\langle~ (\langle X, W\rangle_p\alpha)\ast_p W,~~ W\rangle_p-(\langle X, W\rangle_p b)\overline{z}_k\\
&=& \langle X, W\rangle_p-(\langle X, W\rangle_p\alpha) \langle W, W\rangle_{\mathbb{O}}-(\langle X, W\rangle_p b)\overline{z}_k\\
&=& \langle X, W\rangle_p~(1-|W|^2\alpha-b \overline{z}_k)\\
&=& 0,
\end{eqnarray*}
where we have used a trivial equality $1-|W|^2\alpha-b \overline{z}_k=0$.
From (2) of Lemma \ref{2.2}, it follows that
\begin{eqnarray*}
|Y|^2&=&|X-(\langle X, W\rangle_{p}\alpha)\ast_p W|^2+|\langle X, W\rangle_{p} b|^2\\
&=&|X|^2+(|\alpha|^2|W|^2+|b|^2)\cdot|\langle X, W\rangle_{p}|^2-2\langle X, (\langle X, W\rangle_{p}\alpha)\ast_p W\rangle\\
&=& 1+(|\alpha|^2|W|^2+|b|^2)\cdot|\langle X, W\rangle_{p}|^2-2\mathrm{Re}(\alpha)|\langle X, W\rangle_p|^2
\\
&=& 1+(|\alpha|^2|W|^2+|b|^2-2\mathrm{Re}(\alpha))\cdot|\langle X, W\rangle_{p}|^2\\
&=&1,
\end{eqnarray*}
where we have used a trivial equality $|\alpha|^2|W|^2+|b|^2-2\mathrm{Re}(\alpha)=0$.
Hence, $\psi_1$ is well-defined.

Meanwhile, we construct another embedding
$$\psi_2: (S^{8k-1}\setminus \{N\})\times S^{8k-9}\rightarrow N_+, \quad \psi_2(z, X)=(z, Y),$$
where $$X\in \mathbb{O}^{k-1}\cong\mathbb{O}^{k-1}\oplus~ 0\subset \mathbb{O}^{k}, W=(z_1,\cdots, z_{k-1}),$$
$$ b'=(1-z_k)(1-\overline{z_k})^{-1}, \alpha'=(1-\overline{z_k})^{-1},$$
and
$$Y=\Big(X-(\langle X, W\rangle_p\alpha')\ast_p W, ~~\langle X, W\rangle_p b'\Big).$$
Similarly, $\psi_2$ is well-defined.

Thus we get the characteristic map
$$\chi: S^{8k-2}\rightarrow \mathrm{SO}(8k-8), \quad z\mapsto \chi(z),$$
where $\chi(z)$ is defined by
$$\chi(z)(X):=X-2(\langle X, W\rangle_p(1+z_k)^{-2})\ast_p W \quad\text{for}~ X\in \mathbb{O}^{k-1}.$$

Under the standard identification $\mathbb{O}^{k-1}\cong \mathbb{R}^{8k-8}$, for each $z\in S^{8k-2}$, it follows that $\chi(z)$ is
a real linear transformation and preserves the real inner product. In particular, $\chi(0, z_k)=I_{8k-8}$, $\forall ~(0, z_k)\in S^{8k-2}$.
 Hence, for each $z\in S^{8k-2}$, $\chi(z)\in \mathrm{SO}(8k-8)$.
Therefore, $\chi$ is well-defined.
The proof of Theorem \ref{characteristic maps} (ii) is complete.
\end{proof}

\begin{rem}\rm{
Let $k=2$ and $p=0$. This is just the indefinite case with $(m_1, m_2)=(8, 7)$. By \cite{FKM81}, the focal submanifold $M_+$ in this case is inhomogeneously embedded in $S^{31}$. The characteristic map is given by
\begin{eqnarray*}
\chi: S^{14}&\longrightarrow& \mathrm{SO}(8)\\
\chi(z_1, z_2)(X)&=&X-2((1-z_2)^{-2}(X\overline{z}_1))z_1, ~~\forall ~~X\in \mathbb{O},
\end{eqnarray*}
where $\mathrm{Re}(z_2)=0$. Let $X=(1,0,\cdots,0)\in S^7$. Then
$$\chi(z)X=1-2((1-z_2)^{-2}\overline{z}_1)z_1=-\frac{(1+z_2)^2}{(1-z_2)^2}.$$
Composing with the natural projection $\pi: SO(8)\rightarrow S^7$ induced by $X=(1,0,\cdots,0)$ $\in S^7$ will give rise to a map
\begin{eqnarray*}
g_{2, 0}:=\pi\circ\chi:~~~ S^{14}&\longrightarrow & S^7\\
z=(z_1, z_2)&\mapsto& -\frac{(1+z_2)^2}{(1-z_2)^2}.
\end{eqnarray*}
A similar  argument as that in Remark \ref{1.1} reveals that $g_{2, 0}$ is homotopic to a constant map, $g_{2, 0}\simeq c$. Thus the sphere bundle $S^7\hookrightarrow M_+^{22}\rightarrow S^{15}$ admits a cross-section.}
\end{rem}

\begin{rem}\rm{
In this case with $(m_1,m_2)=(8,8k-9)$ and $0\leq p \leq k-1$, recall the characteristic map
is expressed by
$$\chi(z)(X):=X-2(\langle X, W\rangle_p(1+z_k)^{-2})\ast_p W \quad\text{for}~ X\in \mathbb{O}^{k-1}.$$
Set $X=(1,0,\cdots,0)\in S^{8k-9}$. For $W=(z_1,\cdots,z_{k-1})$, observe
\begin{equation*}
\langle X, W\rangle_p=\left\{\begin{array}{ll}
\overline{z_1}, ~~p\geq 1\\
z_1, ~~p=0.
\end{array}\right.
\end{equation*}
Combining with the natural projection $\pi: SO(8k-8)\rightarrow S^{8k-9}$ induced by $X=(1,0,\cdots,0)\in S^{8k-9}$,
we obtain
$$g_{k, p}:=\pi\circ\chi:~~S^{8k-2}\longrightarrow S^{8k-9},$$
which is defined by
\begin{eqnarray}\label{g p}
&&\\
&&g_{k, p}(z)=\small{\left\{\begin{array}{ll}
\Big(1-2\overline{z_1}\frac{1}{(1+z_k)^2}z_1, ~-2(\overline{z_1}\frac{1}{(1+z_k)^2})z_ 2,\cdots,~ -2(\overline{z_1}\frac{1}{(1+z_k)^2})z_ p,~~\\
\hspace{2.7cm}-2(\frac{1}{(1-z_k)^2}z_1)z_{p+1},  \cdots, -2(\frac{1}{(1-z_k)^2}z_1)z_{k-1}\Big),~~p\geq 1,\vspace{3mm}\\
\Big(1-2\frac{1}{(1-z_k)^2}|z_1|^2, -2(\frac{1}{(1-z_k)^2}\overline{z_1})z_2,\cdots,-2(\frac{1}{(1-z_k)^2}\overline{z_1})z_{k-1}\Big),~~p=0,
\end{array}\right.}\nonumber
\end{eqnarray}
where $z=(W, z_k)$ and $\mathrm{Re}(z_k)=0$.}
\end{rem}

For the definite case with $p=k-1$, we have dealt with the homotopy class $[g_{k, p}]=[g_k]$ in Remark \ref{1.8}. From now on, we focus on the indefinite case, i.e., the case with $p\neq k-1$. Actually, we can apply the similar ideas in the proof of Proposition \ref{quaternion}, and get similar conclusions, since we need not to use the associative law. The following algebraic fact will be used, for which the proof is similar to that of Lemma \ref{extend} and is omitted.
\begin{lem}\label{extend8}
When $m=8$, the symmetric Clifford system $\{P_0,\cdots,P_8\}$ of indefinite type on $\mathbb{R}^{16k}$ defined in (\ref{indef P8}) can be extended to a symmetric Clifford system $\{P_0,\cdots, P_9\}$ if and only if $k=2p+2$.
\end{lem}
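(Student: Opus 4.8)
The plan is to follow the proof of Lemma~\ref{extend} almost verbatim, replacing $\mathbb{H}$ by $\mathbb{O}$ and $4$ by $8$ throughout; nothing in that argument uses associativity of the octonions, only the real-linearity of the $E_\alpha$ and the single identity $e_1(e_2(\cdots(e_7 z)\cdots))=-z$ recalled in Section~\ref{def 8}. As there, I would prove the two implications separately.

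For sufficiency, assume $k=2p+2$. I would exhibit $E_8$ by the same recipe as in Lemma~\ref{extend}: on $\mathbb{O}^k$ set
$$E_8(z)=(z_{p+1},\dots,z_{2p},\,-z_1,\dots,-z_p,\,z_{2p+2},\,-z_{2p+1}),$$
a signed permutation of the coordinates, so that $E_8$ is orthogonal and $E_8^2=-I$. Each $E_\alpha$ ($1\le\alpha\le7$) acts coordinate-wise by left multiplication by $e_\alpha$, modulated by the sign pattern $(+,\dots,+,-,\dots,-,+)$ of (\ref{indef P8}); since $E_8$ merely interchanges the two length-$p$ blocks and the $(k-1)$-st and $k$-th slots (each time with a sign), a slot-by-slot computation --- pure sign bookkeeping, with no appeal to associativity --- gives $E_8 E_\alpha + E_\alpha E_8 = 0$ for every $\alpha$. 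Thus $\{E_1,\dots,E_8\}$ satisfies $E_i E_j + E_j E_i = -2\delta_{ij} I$, and $P_9(z,w):=(E_8 w, -E_8 z)$, together with $P_0,\dots,P_8$ of (\ref{indef P8}), is a symmetric Clifford system on $\mathbb{R}^{16k}$.

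For necessity, suppose $\{P_0,\dots,P_8\}$ extends to $\{P_0,\dots,P_9\}$. Exactly as in Lemma~\ref{extend}, anticommutation of $P_9$ with $P_0$ (forcing $P_9$ to be off-diagonal on $\mathbb{O}^k\oplus\mathbb{O}^k$) and with $P_1$, together with $P_9 = P_9^t$ and $P_9^2 = I$, pin $P_9$ down to the form $P_9(z,w) = (E_8 w, -E_8 z)$ with $\overline{E}_8$, the matrix of $E_8$, skew-symmetric, orthogonal, and anticommuting with $\overline{E}_1,\dots,\overline{E}_7$. I would then decompose $\overline{E}_8$ along $\mathbb{O}^k = \mathbb{O}^p \oplus \mathbb{O}^{k-1-p} \oplus \mathbb{O}$, the splitting adapted to the sign pattern of the $E_\alpha$:
$$\overline{E}_8 = \begin{pmatrix} A & * & b_1 \\ * & B & b_2 \\ -b_1^t & -b_2^t & a\end{pmatrix},$$
with $A,B,a$ skew-symmetric of sizes $8p$, $8(k-1-p)$, $8$. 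Since $e_1(e_2(\cdots(e_7 z)\cdots))=-z$ says precisely that the corresponding composition of seven left-multiplications is $-\mathrm{Id}$ (no associativity needed), the product $\overline{E}_7 \overline{E}_6 \cdots \overline{E}_1$ is, up to an overall sign, $\mathrm{diag}(I_{8p}, -I_{8(k-1-p)}, I_8)$; and $(\overline{E}_8 \overline{E}_7 \cdots \overline{E}_1)^t = \overline{E}_1^t \cdots \overline{E}_8^t = \overline{E}_1 \cdots \overline{E}_8 = \overline{E}_8 \overline{E}_7 \cdots \overline{E}_1$ by the anticommutation relations, so $\overline{E}_8 \overline{E}_7 \cdots \overline{E}_1$ is symmetric. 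Comparing its block form with its transpose forces $b_1 = 0$ and $A^t = A$, $B^t = B$, $a^t = a$, whence $A = B = a = 0$ since they are also skew. Thus
$$\overline{E}_8 = \begin{pmatrix} 0 & C & 0 \\ -C^t & 0 & b_2 \\ 0 & -b_2^t & 0\end{pmatrix}.$$
Non-degeneracy of $\overline{E}_8$ now closes the argument: its first and third row-blocks ($8p+8$ rows) are supported on the middle column-block of width $8(k-1-p)$, forcing $8p+8 \le 8(k-1-p)$, i.e.\ $k \ge 2p+2$; its middle row-block ($8(k-1-p)$ rows) is supported on the first and third column-blocks, of total width $8p+8$, forcing $k \le 2p+2$. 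Hence $k = 2p+2$.

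I do not expect a real obstacle here: each step is a transcription of the $m=4$ case. The only octonion-specific input is the identity $e_1(e_2(\cdots(e_7 z)\cdots))=-z$, already available from Section~\ref{def 8}, and it is used only to identify $\overline{E}_7 \overline{E}_6 \cdots \overline{E}_1$ as a diagonal sign matrix; the associative law is never invoked. The points worth a moment's care are matching the row-vector versus matrix conventions so that the block form of $\overline{E}_8 \overline{E}_7 \cdots \overline{E}_1$ is read off correctly (fortunately the overall sign of $\overline{E}_7 \cdots \overline{E}_1$ does not affect which blocks are annihilated), and the final rank count.
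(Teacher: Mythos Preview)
Your proposal is correct and follows precisely the route the paper itself indicates: the paper omits the proof of Lemma~\ref{extend8}, stating only that it is ``similar to that of Lemma~\ref{extend}'', and your argument is exactly that transcription, with the necessary octonionic inputs (the anticommutation $e_\alpha(e_\beta z)=-e_\beta(e_\alpha z)$ and the identity $e_1(e_2(\cdots(e_7 z)\cdots))=-z$) drawn from Section~\ref{def 8}. Your explicit rank count at the end is a clean way to extract $k=2p+2$ from the non-degeneracy of $\overline{E}_8$, which the paper leaves implicit even in the $m=4$ case.
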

Now we are in a position to prove Proposition \ref{octonion}. During the proof, the homotopy class $[g_{k, p}]$ will also be determined.
\vspace{2mm}

\noindent \textbf{Proposition \ref{octonion}.} \emph{Let $m=8$ and $\mathrm{Tr}(P_0P_1\cdots P_8)=-16(2p-k+2)$ with $0\leq p\leq k-1$. Then the sphere bundle $S^{8k-9}\hookrightarrow M_+^{16k-10}\rightarrow S^{8k-1}$ associated with the vector bundle $\eta$ admits a cross-section if and only if $k-2-2p$ can be divided by $240$.}

\emph{
In particular, the sphere bundle associated with the octonionic Stiefel manifold $V_2(\mathbb{O}^k)$: $S^{8k-9}\hookrightarrow V_2(\mathbb{O}^k)\rightarrow V_1(\mathbb{O}^k)$ admits a cross-section if and only if $k$ can be divided by $240$.}
\vspace{2mm}

\begin{proof}
As an immediate consequence of Lemma \ref{extend8}, in the case with $k=2p+2$, the sphere bundle $S^{8k-9}\hookrightarrow M_+^{16k-10}\rightarrow S^{8k-1}$ admits a cross-section, thus $[g_{k, p}]=0$ in $\pi_{8k-2}S^{8k-9}\cong Z_{240}$.

For $p=0$ and general $k$, recall $g_{k, 0}: S^{8k-2}\rightarrow S^{8k-9}$ in (\ref{g p})
\begin{eqnarray*}
g_{k, 0}(z_1, z_2,\cdots, z_k)&=&\Big(1-2(1-z_k)^{-2}|z_1|^2, -2((1-z_k)^{-2}\overline{z_1})z_2,\cdots,\\
&&\hspace{4cm}-2((1-z_k)^{-2}\overline{z_1})z_{k-1}\Big).
\end{eqnarray*}
Define a similar continuous function $f: S^7\times S^{8k-10}\rightarrow S^{8k-10}$ as that in Remark \ref{indef 4 rem}, and consider the corresponding Hopf construction $H(f): S^{8k-2}\rightarrow S^{8k-9}$. Using proof by contradiction, one can see that $(g_{k, 0}+H(f))(z)\neq 0$ for any $z\in S^{8k-1}$ with $\mathrm{Re}(z_k)=0$.
Therefore, $g_{k, 0}\simeq H(f)$, and thus $$[g_{k, 0}]=[H(f)]\equiv -(k-2) ~(\mathrm{mod}~ 240 )\quad\text{in} ~~\pi_{8k-2}S^{8k-9}\cong Z_{240}.$$

For $1\leq p< k-1$, recall $g_{k, p}: S^{8k-2}\rightarrow S^{8k-9}$ in (\ref{g p}) \begin{eqnarray*}
g_{k, p}(z)&=&
\Big(1-2\overline{z_1}\frac{1}{(1+z_k)^2}z_1, ~-2(\overline{z_1}\frac{1}{(1+z_k)^2})z_ 2,\cdots,~ -2(\overline{z_1}\frac{1}{(1+z_k)^2})z_ p\\
&&\hspace{2.5cm}-2(\frac{1}{(1-z_k)^2}z_1)z_{p+1},  \cdots, -2(\frac{1}{(1-z_k)^2}z_1)z_{k-1}\Big).
\end{eqnarray*}
Define $\Upsilon:~S^{8k-2}\longrightarrow S^{8k-9}$ by
\begin{eqnarray*}
\Upsilon(z)&=&\Big(1-2\overline{z_1}(1+z_k)^{-2}z_1, -\frac{2\overline{z_1}z_2}{1+|z_k|^2}, \cdots, -\frac{2\overline{z_1}z_p}{1+|z_k|^2}, -\frac{2z_1z_{p+1}}{1+|z_k|^2}, \cdots, \\
&&\hspace{9cm}-\frac{2z_1z_{k-1}}{1+|z_k|^2}\Big),
\end{eqnarray*}
and $\Psi:~S^{8k-2}\longrightarrow S^{8k-9}$ by
$$\Psi(z)=\Big(1-2|z_1|^2+2\frac{\overline{z_1}z_kz_1}{|z_1|}, -2\overline{z_1}z_2,\cdots, -2\overline{z_1}z_p, -2z_1z_{p+1}, \cdots, -2z_1z_{k-1}\Big).$$
Again, using proof by contradiction, one can show that $(g_{k,p}+\Upsilon)(z)\neq 0$ and $(\Upsilon+\Psi)(z)\neq 0$ for any $z\in S^{8k-1}$ with $\mathrm{Re}(z_k)=0$. Thus
$\Psi\simeq\Upsilon\simeq g_{k, p}$.

Consequently, by virtue of the discussion in Remark \ref{1.8}, we obtain
\begin{eqnarray*}
[g_{k, p}]&=&[\Upsilon]=[\Psi]\\
&=&[H(\Omega_0)]+[H(\Omega_1)]+\cdots+[H(\Omega_p)]+(k-1-p)[\Sigma^{4k-9}H(\sigma)]\\
&=&-[\Sigma^{8k-16}H(\rho)]-(p-1)[\Sigma^{8k-17}H(\sigma)]+(k-1-p)[\Sigma^{8k-17}H(\sigma)]\\
&\equiv& k-2-2p~(\mathrm{mod}~240).
\end{eqnarray*}

Therefore, the sphere bundle $S^{8k-9}\hookrightarrow M_+^{16k-10}\rightarrow S^{8k-1}$ admits a cross-section if and only if $k-2-2p$ can be divided by $240$.
\end{proof}

\begin{rem}\rm{
Using the product $\langle \cdot, \cdot\rangle_p$ of $\mathbb{O}^k$ defined by (\ref{o1}), one can also define a space $N_{p,k,q}(\mathbb{O})$ similarly as $N_{p,k,q}(\mathbb{H})$ in subsection \ref{Natural questions} and ask the same questions. In \cite{QTY22}, the authors proved that $N_{p,k,3}(\mathbb{O})$ admits a smooth manifold structure. In particular, for $p=k-1$, $V_3(\mathbb{O}^k)=N_{k-1,k,3}(\mathbb{O})$ is a smooth manifold. This answers partially a question of James \cite{Ja58}.}
\end{rem}

\section{Harmonic maps}
As we mentioned in Section \ref{intro}, given a map $f:M\rightarrow N$ between Riemannian manifolds, it is natural to seek whether a harmonic representative in the homotopy class of $f$ exists. When the target has non-positive sectional curvatures, Eells and Sampson \cite{ES64} obtained an affirmative answer
by using the method of the heat equation.
In the case of positive sectional curvature, the problem is difficult and challenging.
In the particular case of spheres, Smith \cite{Smi75} proved that $\pi_n S^n$ is represented by harmonic maps for $1\leq n\leq 7$. Moreover, it is an intriguing problem asked by Yau \cite{Yau82} for each elements in $\pi_i S^n$. In \cite{PT97, PT98}, Peng and Tang applied orthogonal multiplication to prove that for almost every odd integer $n$, the element $2$ in $\pi_nS^n=\mathbb{Z}$ admits a harmonic representative. For the case of $i>n$, certain elements in $\pi_3S^2$,
$\pi_7S^n(3\leq n\leq 5)$ and $\pi_{n+3}S^n(5\leq n\leq 10)$ have been considered in
\cite{Smi75}. Furthermore, Peng and Tang \cite{PT96, PT98} have proved that certain elements in $\pi_6S^3$, $\pi_{n+1}S^n(n=10, 12 ,16)$, $\pi_{8k+7}S^{8k+6}(k\geq 0)$ and $\pi_{4k-1}S^{4k-4}(k>2)$ admit harmonic representatives. In this section, based on our study on characteristic maps, we will prove the following:

\vspace{2mm}
\noindent \textbf{Theorem \ref{thm1.2}.}
(i). \emph{The homotopy group $\pi_{14}S^7$ has a harmonic generator.}

(ii). \emph{Let $k>2$. Then the element $(2j-k+1) (\mathrm{mod}~240)$ in the homotopy group $\pi_{8k-1}S^{8k-8}=\mathbb{Z}_{240}$ has a harmonic  representative for every $j=0,\cdots, k-1$.}
\vspace{2mm}

\begin{proof}
For (i), define $f: S^7\times S^6 \rightarrow S^6$ by $$f(x, y)=xy\overline{x}$$
for $x\in \mathbb{O}$, $|x|=1$, $y\in \mathrm{Im} \mathbb{O}$ and $|y|=1$.
Clearly $f$ is a bi-eigenmap with bi-eigenvalue $(24, 6)$. Moreover, its Hopf construction $H(f): S^{14}\rightarrow S^7$ is represented by
$$H(f)(z_1, z_2)=2|z_2|^2-1+2\frac{z_1z_2\overline{z_1}}{|z_1|}$$
for $z_1\in \mathbb{H}$ and $z_2\in \mathrm{Im} \mathbb{H}$. It follows from the Ratto-Ding theorem (\cite{Di94}) that the Hopf construction $H(f)$ is homotopic to a harmonic map.

We are left to show $[H(f)]$ generates the homotopy group $\pi_{14}S^7=\mathbb{Z}_{120}$.
In fact, following the detailed arguments in Remark \ref{1.8}, we can see
 $$[H(f)]=-[\tau]=-[g_2]=-[b']=-[b]\in \pi_{14}S^7.$$
 Moreover according to
\cite{DMR04}, $[b]$ generates $\pi_{14}S^7=\mathbb{Z}_{120}$. Consequently, $[H(f)]$ is also a generator of
the homotopy group $\pi_{14}S^7$.

For (ii), we first consider the map $f: S^7\times S^{8k-9}\rightarrow S^{8k-9}$ defined by
$$f(x_1, x_2,\cdots, x_k)=(x_1x_2,\cdots, x_1x_{j+1}, \overline{x_1}x_{j+2},\cdots, \overline{x_1}x_{k}),$$
where $x_1\in S^7\subset\mathbb{O}$, $(x_2,\cdots, x_{k})\in S^{8k-9}\subset\mathbb{O}^{k-1}$.
It follows from the Ratto-Ding theorem (\cite{Di94}) that the Hopf construction $H(f)$ is homotopic to a harmonic map.

We are now left to determine the homotopy class $[H(f)]\in \pi_{8k-1}S^{8k-8}=\mathbb{Z}_{240}$.
Similar to Remark \ref{1.8}, for $1\leq i\leq j$, define $\eta_i: S^7 \rightarrow \mathrm{SO}(8k-8)$ by
$$\eta_i(x_1)(x_2,\cdots, x_k)=(x_2,\cdots, x_i, x_1x_{i+1},\cdots, x_{k}),$$
and for $j+1\leq i\leq k-1$, define $\eta_i: S^7\rightarrow \mathrm{SO}(8k-8)$ by
$$\eta_i(x_1)(x_2,\cdots, x_k)=(x_2,\cdots, x_i, \overline{x_1}x_{i+1},\cdots, x_{k}).$$
As homotopy classes in $\pi_{8k-1}S^{8k-8}$, the following equalities hold
$$[H(f)]=[H(\eta_1)]+[H(\eta_2)]+\cdots+[H(\eta_{k-1})],$$
$$[H(\eta_i)]=[\Sigma^{8k-16} H(\sigma)],~~ i=1,\cdots, j,$$
and
$$[H(\eta_i)]=-[\Sigma^{8k-16} H(\sigma)],~~ i= j+1,\cdots, k-1,$$
where $\sigma$ and $\Sigma$ are defined in Remark \ref{1.8}.
Therefore, for $k>2$,
\begin{eqnarray*}
[H(f)]&=&j[\Sigma^{8k-16} H(\sigma)]-(k-j-1)[\Sigma^{8k-16} H(\sigma)]\\
&\equiv&2j-k+1~(\mathrm{mod}~240)
\end{eqnarray*}
as we hoped.
\end{proof}


\section{Non-negative curvature}\label{sec4}
\subsection{$M_-$ of OT-FKM type}
This subsection is based on \cite{ABS64} and \cite{Wa88}. Let
$\{P_0, P_1,\cdots, P_{m}\}$ on $\mathbb{R}^{2l}$ be a given symmetric Clifford system. Choose a set of orthogonal transformations $\{E_1, E_2,\cdots, E_{m-1}\}$ on $\mathbb{R}^l$ with respect to the Euclidean metric, which satisfies
$E_{\alpha}E_{\beta}+E_{\beta}E_{\alpha}=-2\delta_{\alpha\beta}I_l$\,\,
for $1 \leq \alpha,\beta\leq m-1$ and up to algebraic equivalence
\begin{equation*}\label{FKM}
P_0=\left(
\begin{matrix}
I_l & 0 \\
0 & -I_l \\
\end{matrix}\right),\;
P_1=\left(
\begin{matrix}
0 & I_l\\
I_l & 0 \\
\end{matrix}\right), P_{1+\alpha}=\left(
\begin{matrix}
0 & E_{\alpha}\\
-E_{\alpha} & 0 \\
\end{matrix}\right),
~\mathrm{for}~ 1 \leq \alpha\leq m-1.
\end{equation*}
Let $\mathfrak{h}(\mathbb{R}^{2l})$ be the space of symmetric endomorphisms on $\mathbb{R}^{2l}$, $\langle\cdot,\cdot \rangle$ be the scalar product on $\mathfrak{h}(\mathbb{R}^{2l})$ defined by
$\langle A, B\rangle:=\frac{1}{2l}\mathrm{Trace}(AB)$, and $E_{\pm}(A)$ be the eienspaces of $A$ with eigenvalues $\pm$ respectively. Denote the unit sphere in $\mathrm{Span}\{P_0,..., P_m\}\subset \mathfrak{h}(\mathbb{R}^{2l})$ by $\Sigma(P_0,..., P_m)$, the so-called
\emph{Clifford sphere}.
According to \cite{FKM81}, the associated focal submanifold $M_-$ of the OT-FKM type is equal to
$$\{x\in S^{2l-1}~|~\exists P\in\Sigma(P_0,..., P_m)~ \text{such that}~ x\in E_+(P) \}.$$
Clearly, it is diffeomorphic to $S(\xi)$,
the total space of an $S^{l-1}$-bundle over the Clifford sphere $\Sigma(P_0,..., P_m)$.

On the other hand, the set of orthogonal transformations $\{E_1, E_2,\cdots, E_{m-1}\}$ on $\mathbb{R}^l$ induces an orthogonal representation of the Clifford algebra $\mathcal{C}_{m-1}$ on $\mathbb{R}^l$. Recall that $\mathrm{Spin}(m)\subset \mathcal{C}_{m}^0\cong \mathcal{C}_{m-1}$, it further induces an orthogonal representation of $\mathrm{Spin}(m)$. For the canonical principal bundle $\mathrm{Spin}(m)\hookrightarrow \mathrm{Spin}(m+1)\rightarrow S^m$ and the representation of $\mathrm{Spin}(m)$ on $\mathbb{R}^l$, we can construct the associated vector bundle $\xi'$ with the total space $\mathrm{Spin}(m+1)\times_{\mathrm{Spin}(m)}\mathbb{R}^l$. Proposition 1 in \cite{Wa88} implies clearly the following
\begin{assert}(\cite{Wa88})\label{4.1.1}
As vector bundles, $\xi$ and $\xi'$ are isomorphic.
\end{assert}
With these preparations, we can show
\vspace{2mm}

\noindent\textbf{Proposition \ref{M_-}.}
\emph{Each focal submanifold $M_-$ of OT-FKM type admits a metric with non-negative sectional curvature.}
\vspace{2mm}

\begin{proof}
By Assertion \ref{4.1.1}, $M_-$, the total space of sphere bundle of $\xi$, is diffeomorphic to that of the sphere bundle of $\xi'$. It means that $M_-$ is diffeomorphic to $\mathrm{Spin}(m+1)\times_{\mathrm{Spin}(m)}S^{l-1}$.

Choose a bi-invariant metric on $\mathrm{Spin}(m+1)$, the standard metric on $S^{l-1}$, and
the product metric on $\mathrm{Spin}(m+1)\times S^{l-1}$. It follows that $\mathrm{Spin}(m)$ acts isometrically
on $\mathrm{Spin}(m+1)\times S^{l-1}$. Therefore, it induces a metric on $M_-\cong \mathrm{Spin}(m+1)\times_{\mathrm{Spin}(m)}S^{l-1}$
with non-negative sectional curvature by the celebrated Gray-O'Neill formula for
Riemannian submersions.
\end{proof}

\subsection{$M_+$ of OT-FKM type and $(m_1, m_2)=(3, 4k-4)$}
Let $m=3$, $l=k\delta(3)=4k$.
For $x=(x_1,\cdots, x_k), y=(y_1,\cdots, y_k)\in \mathbb{H}^k\cong \mathbb{R}^{4k}$, define
$$\langle x, y\rangle_{\mathbb{H}}=\sum_{i=1}^{k}x_i\overline{y}_i,$$
so that $\langle x, y\rangle= \mathrm{Re}\langle x, y\rangle_{\mathbb{H}}.$

Choose a symmetric Clifford system $\{P_0, P_1, P_2, P_3\}$ on $\mathbb{R}^{8k}\cong \mathbb{H}^k\oplus \mathbb{H}^k$ as follows.
For each $(x, y)\in \mathbb{H}^k\oplus \mathbb{H}^k$, define
\begin{eqnarray*}
P_0(x, y)&=&(x, -y),\nonumber\\
P_1(x, y)&=&(\mathrm{i}y, -\mathrm{i}x),\nonumber\\
P_2(x, y)&=&(\mathrm{j}y, -\mathrm{j}x),\nonumber\\
P_3(x, y)&=&(\mathrm{k}y, -\mathrm{k}x).
\end{eqnarray*}
The isoparametric foliation of OT-FKM type in $S^{8k-1}$ with respect to this given Clifford system is inhomogeneous, which was originally discovered by \cite{OT75}. Moreover, the focal submanifold $M_-$ is homogeneously embedded, and $M_+$ are inhomogeneously embedded. Clearly, the focal submanifold $M_+$ associated with the given Clifford system is
$$M_+=\{(x, y)\in \mathbb{H}^k\oplus \mathbb{H}^k~|~\langle x, x\rangle=\langle y, y\rangle=\frac{1}{2},~\langle x, y\rangle_{\mathbb{H}}\in \mathbb{R}\}.$$
As usual let $\mathrm{Sp}(k)$ be the compact symplectic group. Consider the
diagonal action of $\mathrm{Sp}(k)$ on $\mathbb{H}^k\oplus \mathbb{H}^k$. More precisely, for $A\in \mathrm{Sp}(k)$ and $(x, y)\in \mathbb{H}^k\oplus \mathbb{H}^k$, define $A\cdot(x, y)=(xA^{-1}, yA^{-1})$.
\begin{prop}
There exists an induced $\mathrm{Sp}(k)$ action on $M_+$. Moreover, $(M_+, \mathrm{Sp}(k))$ is a cohomogeneity one manifold.
\end{prop}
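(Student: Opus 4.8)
The plan is to verify first that the diagonal $\mathrm{Sp}(k)$-action on $\mathbb{H}^k\oplus\mathbb{H}^k$ preserves $M_+$, and then to exhibit an explicit one-parameter family of $\mathrm{Sp}(k)$-orbit representatives sweeping out $M_+$, so that a principal orbit has codimension one. For the invariance, note that right multiplication by $A^{-1}\in\mathrm{Sp}(k)$ preserves the real inner product on $\mathbb{H}^k\cong\mathbb{R}^{4k}$, hence the conditions $\langle x,x\rangle=\langle y,y\rangle=\tfrac12$ are untouched; and since $A$ has quaternionic entries acting on the right while $\langle\cdot,\cdot\rangle_{\mathbb{H}}$ is built from $\sum x_i\overline{y_i}$ with the conjugation sitting on the $y$-slot, one checks $\langle xA^{-1},yA^{-1}\rangle_{\mathbb{H}}=\langle x,y\rangle_{\mathbb{H}}$ (this is just the statement that $A^{-1}(\overline{A^{-1}})^t=I_k$ for $A\in\mathrm{Sp}(k)$, read entrywise). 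In particular the condition $\langle x,y\rangle_{\mathbb{H}}\in\mathbb{R}$ is preserved, so $A\cdot(x,y)\in M_+$.

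Next I would normalize orbits. Given $(x,y)\in M_+$, first act to move $x$ to a standard position: since $\mathrm{Sp}(k)$ acts transitively on the sphere of radius $\tfrac1{\sqrt2}$ in $\mathbb{H}^k$ (indeed on the unit sphere $S^{4k-1}$), we may assume $x=\tfrac1{\sqrt2}(1,0,\dots,0)$. The stabilizer of this $x$ is $\{1\}\times\mathrm{Sp}(k-1)$ acting on the last $k-1$ quaternionic coordinates. Write $y=(y_1,y_2,\dots,y_k)$; the condition $\langle x,y\rangle_{\mathbb{H}}\in\mathbb{R}$ forces $y_1\in\mathbb{R}$, and $\langle y,y\rangle=\tfrac12$. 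Now use $\mathrm{Sp}(k-1)$ to normalize $(y_2,\dots,y_k)\in\mathbb{H}^{k-1}$: its orbit on a sphere of given radius $r$ in $\mathbb{H}^{k-1}=\mathbb{R}^{4k-4}$ is transitive for $r>0$, so we may assume $(y_2,\dots,y_k)=(c,0,\dots,0)$ with $c\ge 0$ real, subject to $y_1^2+c^2=\tfrac12$. Thus every orbit has a representative $(x,y)$ with $x=\tfrac1{\sqrt2}(1,0,\dots,0)$, $y=(y_1,c,0,\dots,0)$, $y_1\in\mathbb{R}$, $c\ge 0$, $y_1^2+c^2=\tfrac12$ — a one-parameter family (parametrized e.g. by the angle $\theta$ with $y_1=\tfrac1{\sqrt2}\cos\theta$, $c=\tfrac1{\sqrt2}\sin\theta$, $\theta\in[0,\pi]$ after accounting for the residual sign freedom). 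This shows the orbit space is (at most) one-dimensional, hence a principal orbit has codimension $\le 1$; since the action is clearly not transitive (the value of $\langle x,y\rangle_{\mathbb{H}}\in[-\tfrac12,\tfrac12]$, equivalently the "angle" between $x$ and $y$, is an invariant that genuinely varies), the cohomogeneity is exactly one.

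To package this cleanly I would (i) define $f:M_+\to\mathbb{R}$ by $f(x,y)=\langle x,y\rangle_{\mathbb{H}}$ (well-defined and real-valued on $M_+$, and $\mathrm{Sp}(k)$-invariant by the computation above), (ii) observe $f$ is a smooth invariant whose image is a nondegenerate interval, so $\dim(M_+/\mathrm{Sp}(k))\ge 1$, and (iii) show via the normalization that any two points with the same $f$-value lie in the same orbit, so $f$ descends to a homeomorphism onto its image and $\dim(M_+/\mathrm{Sp}(k))=1$. The main obstacle I anticipate is the bookkeeping in step (iii): one must check carefully that after fixing $x$ the residual group $\mathrm{Sp}(k-1)$ (together with the center-type sign ambiguities coming from quaternionic scalars that stabilize $x$, i.e. unit quaternions $q$ with $q\cdot 1\cdot$ — actually only $\pm1$ survive inside $\mathrm{Sp}(k)$ acting on the right) really does act transitively on the relevant slice of $y$'s, and that the discrete identifications at the endpoints $\theta=0,\pi$ do not drop the dimension — but none of this is deep, it is the standard slice analysis for diagonal actions on products of spheres, and transitivity of $\mathrm{Sp}(n)$ on spheres in $\mathbb{H}^n$ does all the real work.
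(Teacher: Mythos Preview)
Your proposal is correct and follows essentially the same line as the paper's proof: both verify invariance by the identity $\langle xA^{-1},yA^{-1}\rangle_{\mathbb{H}}=\langle x,y\rangle_{\mathbb{H}}$, and both use the real-valued invariant $f(x,y)=\langle x,y\rangle_{\mathbb{H}}$ (the paper uses $2\langle x,y\rangle$) to parametrize the orbit space by an interval.

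The execution differs slightly. The paper invokes \cite{QT16} to recognize $f$ as an isoparametric function on $M_+$, identifies the regular level sets with the quaternionic Stiefel manifold $V_2(\mathbb{H}^k)$ (the focal submanifold of the definite $(4,4k-5)$ family) and the two singular level sets with $S^{4k-1}$, and then asserts that each level set is a single $\mathrm{Sp}(k)$-orbit. Your argument is more elementary and self-contained: you normalize $x$ using transitivity of $\mathrm{Sp}(k)$ on $S^{4k-1}$, compute the stabilizer $\mathrm{Sp}(k-1)$, and then normalize the remaining coordinates of $y$ by transitivity of $\mathrm{Sp}(k-1)$ on spheres in $\mathbb{H}^{k-1}$. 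This slice analysis is exactly what justifies the paper's assertion that $\mathrm{Sp}(k)\cdot(x_0,y_0)=f^{-1}(t)$, so in a sense you have filled in the step the paper leaves implicit. What your version does not provide (and does not need for the stated proposition) is the identification of the orbit types; the paper's route via \cite{QT16} buys that extra information, which is used downstream in the description $M_+\cong \mathrm{Sp}(k)\times_{\mathrm{Sp}(k-1)}S^{4k-4}$.
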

\begin{proof}
For $x, y\in \mathbb{H}^k$ and $A\in \mathrm{Sp}(k)$, it is easy to verify
\begin{eqnarray*}
|xA|^2=|x|^2=\frac{1}{2},&&|yA|^2=|y|^2=\frac{1}{2},\nonumber\\
\langle xA, yA\rangle_{\mathbb{H}}&=&\langle x, y\rangle_{\mathbb{H}} \in \mathbb{R}.
\end{eqnarray*}
Therefore, $M_+$ is an invariant subset and has an induced $\mathrm{Sp}(k)$ action.
Define a real function $f: M_+\rightarrow \mathbb{R}$ by $f(x, y)=2\langle x, y\rangle$. According to
\cite{QT16}, $f$ is an isoparametric function with $\mathrm{Im}(f)=[-1, 1]$. For
$t\in (-1, 1)$, $f^{-1}(t)$ is diffeomorphic to the focal submanifold of the definite $(4, 4k-5)$
case with dimension equal to $8k-6$, and $f^{-1}(0)$ is the given focal submanifold, which
is homogeneously embedded in $S^{8k-1}$. For each $t\in \{1, -1\}$, $f^{-1}(t)$ is isometric to $S^{4k-1}$
and totally geodesic in $M_+$. Meanwhile, for any $(x_0, y_0)\in M_+$ with $f(x_0, y_0)=t
\in (-1, 1)$, the orbit $\mathrm{Sp}(k)\cdot (x_0, y_0)$ is exactly $f^{-1}(t)$. This means that
$(M_+, \mathrm{Sp}(k))$ is a cohomogeneity one manifold.
\end{proof}
Let $S^{4k-4}$ be the unit sphere in $\mathbb{R}\oplus \mathbb{H}^{k-1}\subset \mathbb{H}\oplus \mathbb{H}^{k-1}$
and $\mathrm{Sp}(k-1)$ be identified with the subgroup of $\mathrm{Sp}(k)$
$$\left\{\left(
\begin{array}{cc}
1 & 0 \\
0 & g \\
\end{array}\right)\;\in \mathrm{Sp}(k)~|~g\in \mathrm{Sp}(k-1)\right\}.$$
Let the symplectic group $\mathrm{Sp}(k-1)$ act on $\mathrm{Sp}(k)\times S^{4k-4}$ as
\begin{eqnarray*}
\mathrm{Sp}(k-1)\times (\mathrm{Sp}(k)\times S^{4k-4})&\rightarrow&\mathrm{Sp}(k)\times S^{4k-4},\nonumber\\
(h, (A, z))&\mapsto&(hA, zh^{-1}).
\end{eqnarray*}
It is clear that the action is free. Denote the quotient space by $\mathrm{Sp}(k)\times_{\mathrm{Sp}(k-1)}S^{4k-4}$.
\begin{prop}
With the notations as above, $M_+$ is diffeomorphic to
$$\mathrm{Sp}(k)\times_{\mathrm{Sp}(k-1)}S^{4k-4}.$$
\end{prop}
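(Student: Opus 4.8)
The plan is to construct an explicit $\mathrm{Sp}(k-1)$-invariant smooth map $\Phi\colon \mathrm{Sp}(k)\times S^{4k-4}\to M_+$ and to check that the induced map on the quotient is a diffeomorphism. Write $A\in\mathrm{Sp}(k)$ by its rows $a_1,\dots,a_k\in\mathbb H^k$, which form an orthonormal basis for $\langle\cdot,\cdot\rangle_{\mathbb H}$, and write a point of $S^{4k-4}\subset\mathbb R\oplus\mathbb H^{k-1}$ as $z=(z_0,z_1,\dots,z_{k-1})$ with $z_0\in\mathbb R$ and $z_0^2+\sum_{i=1}^{k-1}|z_i|^2=1$. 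I would set
\[
\Phi(A,z)=\Bigl(\tfrac{1}{\sqrt{2}}\,a_1,\ \tfrac{1}{\sqrt{2}}\bigl(z_0a_1+\sum_{i=1}^{k-1}z_i\,a_{i+1}\bigr)\Bigr).
\]
Using only the orthonormality of the rows, one checks directly that $|x|^2=|y|^2=\tfrac12$ and $\langle x,y\rangle_{\mathbb H}=\tfrac{z_0}{2}\in\mathbb R$, so $\Phi(A,z)\in M_+$; in fact $f(\Phi(A,z))=z_0$, so $\Phi$ carries the latitude spheres of $S^{4k-4}$ onto the level sets of the isoparametric function $f$ of the previous proposition.

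Next I would verify invariance under the free action $(h,(A,z))\mapsto(hA,zh^{-1})$ of $\mathrm{Sp}(k-1)\cong\{\mathrm{diag}(1,g):g\in\mathrm{Sp}(k-1)\}$: the first row $a_1$ is unchanged by left multiplication by $h=\mathrm{diag}(1,g)$, while the combination $\sum_i z_i a_{i+1}$ is unchanged because replacing $(z_1,\dots,z_{k-1})$ by $(z_1,\dots,z_{k-1})g^{-1}$ and simultaneously $(a_2,\dots,a_k)$ by $g\cdot(a_2,\dots,a_k)$ leaves it fixed — a one-line index manipulation using $g^{-1}g=I$, valid over $\mathbb H$ because the scalars $z_i$ sit on the left and the matrix entries of $g$ multiply on the right. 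Hence $\Phi$ descends to a smooth map $\bar\Phi\colon\mathrm{Sp}(k)\times_{\mathrm{Sp}(k-1)}S^{4k-4}\to M_+$, and both source and target are compact manifolds of dimension $8k-5$ (for the source, it fibres over $\mathrm{Sp}(k)/\mathrm{Sp}(k-1)=S^{4k-1}$ with fibre $S^{4k-4}$).

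It remains to see that $\bar\Phi$ is a bijection with smooth inverse. For surjectivity, given $(x,y)\in M_+$ pick $A\in\mathrm{Sp}(k)$ whose first row is the unit vector $\sqrt{2}\,x$ (possible since $\mathrm{Sp}(k)$ acts transitively on unit vectors of $\mathbb H^k$); expanding $\sqrt{2}\,y=\sum_{i=1}^k\langle\sqrt{2}\,y,a_i\rangle_{\mathbb H}\,a_i$ in this orthonormal basis, the first coefficient is $\langle\sqrt{2}\,y,a_1\rangle_{\mathbb H}=2\langle y,x\rangle_{\mathbb H}=\overline{2\langle x,y\rangle_{\mathbb H}}\in\mathbb R$, so the tuple $z$ of these coefficients lies in $S^{4k-4}\subset\mathbb R\oplus\mathbb H^{k-1}$ and satisfies $\Phi(A,z)=(x,y)$. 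For injectivity, if $\Phi(A,z)=\Phi(A',z')$ then $a_1=a_1'$, and a matrix in $\mathrm{Sp}(k)$ with first row $e_1$ is necessarily block-diagonal, so $A'=hA$ with $h=\mathrm{diag}(1,g)\in\mathrm{Sp}(k-1)$; expanding $y$ in the two bases then forces $z'=zh^{-1}$, so $(A,z)$ and $(A',z')$ represent the same point. Finally, choosing $A$ from a local smooth section of $\mathrm{Sp}(k)\to S^{4k-1}$ and noting that $z$ depends polynomially on $(x,y)$ and on the entries of $A$, the inverse is smooth locally, and independence of the section modulo the $\mathrm{Sp}(k-1)$-action makes it globally well defined; hence $\bar\Phi$ is a diffeomorphism.

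The only genuinely delicate point is the bookkeeping forced by the non-commutativity of $\mathbb H$: one must fix conventions (rows versus columns, the left $\mathbb H$-module structure on $\mathbb H^k$, and the precise placement of $g$ in the $\mathrm{Sp}(k-1)$-action) so that both the invariance identity and the transformation law of the coefficients $\langle\sqrt{2}\,y,a_i\rangle_{\mathbb H}$ under a change of $A$ collapse as claimed. A more structural alternative would invoke the cohomogeneity-one description from the previous proposition directly — the two singular orbits $f^{-1}(\pm1)\cong S^{4k-1}$ both have isotropy $\mathrm{Sp}(k-1)$ with normal slice the standard representation on $\mathbb R^{4k-4}$, whence $M_+=\mathrm{Sp}(k)\times_{\mathrm{Sp}(k-1)}\bigl(D^{4k-4}\cup_{S^{4k-5}}D^{4k-4}\bigr)=\mathrm{Sp}(k)\times_{\mathrm{Sp}(k-1)}S^{4k-4}$ — but identifying the two slice representations is the same linear-algebra check, so the explicit map seems the most economical route.
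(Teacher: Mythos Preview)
Your proof is correct and uses exactly the same map as the paper: writing $zA=\sum_i z_i a_i$ for the matrix product, the paper's $\Phi(A,z)=\tfrac{1}{\sqrt2}(A_1,zA)$ is your $\Phi$ with a trivial index shift. The only cosmetic difference is that the paper packages your surjectivity/injectivity check by observing that $\Phi$ is $\mathrm{Sp}(k)$-equivariant for the right $\mathrm{Sp}(k)$-action $g\ast(A,z)=(Ag^{-1},z)$, hence a surjective submersion whose fibers are precisely the $\mathrm{Sp}(k-1)$-orbits, whereas you verify bijectivity and smoothness of the inverse by hand via local sections of $\mathrm{Sp}(k)\to S^{4k-1}$.
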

\begin{proof}
Define
\begin{eqnarray*}
\Phi: \mathrm{Sp}(k)\times S^{4k-4}&\rightarrow&M_+,\nonumber\\
(A, z)&\mapsto&\frac{1}{\sqrt{2}}(A_1, zA),
\end{eqnarray*}
where $A_1$ is the first row of $A$, and $z=(z_1, z_2,\cdots, z_k)\in \mathbb{R}\oplus \mathbb{H}^{k-1}\subset \mathbb{H}\oplus \mathbb{H}^{k-1}$.
Since $\langle A_1, zA\rangle_{\mathbb{H}}=\langle A_1\overline{A}^t, z\rangle_{\mathbb{H}}=z_1\in \mathbb{R}$,
$\Phi$ is well-defined. Define an $\mathrm{Sp}(k)$ action on $\mathrm{Sp}(k)\times S^{4k-4}$ by
$$g\ast(A, z)=(Ag^{-1}, z),~\mathrm{for}~g\in \mathrm{Sp}(k)~\mathrm{and}~(A, z)\in \mathrm{Sp}(k)\times S^{4k-4}.$$
Clearly, this $\mathrm{Sp}(k)$ action commutes with the $\mathrm{Sp}(k-1)$ action on $\mathrm{Sp}(k)\times S^{4k-4}$ defined before. Thus $\Phi$ is an $\mathrm{Sp}(k)$-equivariant map, and a surjective submersion whose fibers are exactly the orbits of the $\mathrm{Sp}(k-1)$ action.
As a result, $\Phi$ induces the desired $\mathrm{Sp}(k)$-equivariant diffeomorphism.
\end{proof}
We conclude this section with a proof of
\vspace{2mm}

\noindent \textbf{Theorem \ref{M_+}.}
\emph{The focal submanifold $M_+$ of OT-FKM type with $(m_1, m_2)=(3, 4k-4)$ admits a metric with non-negative
sectional curvature.}
\vspace{2mm}

\begin{proof}
Choose a bi-invariant metric on $\mathrm{Sp}(k)$, the standard metric on $S^{4k-4}$, and
the product metric on $\mathrm{Sp}(k)\times S^{4k-4}$. It follows that $\mathrm{Sp}(k-1)$
acts isometrically on $\mathrm{Sp}(k)\times S^{4k-4}$. By the previous proposition, it induces a metric
on $M_+\cong\mathrm{Sp}(k)\times_{\mathrm{Sp}(k-1)}S^{4k-4}$ with non-negative sectional curvature
by the celebrated Gray-O'Neill formula for Riemannian submersions.
\end{proof}

\begin{ack}
The authors express their gratitude to the anonymous referees for careful reading and valuable suggestions.
\end{ack}

\end{document}